\documentclass[12pt]{amsart}
\usepackage{amsmath,bbm,amsfonts,dsfont,amssymb,amsxtra,amscd,enumerate,amsthm,yfonts,verbatim}
\usepackage{color,xcolor}
\usepackage{latexsym}
\usepackage{epsfig}
\usepackage{fullpage}
\usepackage{hyperref}
\usepackage{comment}
\usepackage[shortlabels]{enumitem}
\usepackage{esint}
\usepackage[scr=rsfso]{mathalfa}

\usepackage{dsfont} 
\newcommand{\1}{\mathds 1}

\newcommand\les{\lesssim}
\newcommand\ges{\gtrsim}

\newcommand\R{\mathbb{R}}
\newcommand\C{\mathbb{C}}
\newcommand\Z{\mathbb{Z}}
\newcommand\N{\mathbb{N}}

\newcommand{\supp}{\text{supp }}

\newcommand\la{\langle}
\newcommand\ra{\rangle}

\newtheorem{theorem}{Theorem}[section]
\newtheorem{lemma}[theorem]{Lemma}
\newtheorem{proposition}[theorem]{Proposition}

\newtheorem{definition}[theorem]{Definition}

\newtheorem{r1}{Remark}

\numberwithin{equation}{section}

\usepackage{mathtools}
\mathtoolsset{showonlyrefs}

\usepackage{mathtools}
\newcommand{\defeq}{\vcentcolon=}
\newcommand{\eqdef}{=\vcentcolon}

\DeclarePairedDelimiter\abs{\lvert}{\rvert}
\DeclarePairedDelimiter\norm{\lVert}{\rVert}
\DeclarePairedDelimiter\inner{\langle}{\rangle}

\makeatletter
\let\oldabs\abs
\def\abs{\@ifstar{\oldabs}{\oldabs*}}
\let\oldnorm\norm
\def\norm{\@ifstar{\oldnorm}{\oldnorm*}}
\let\oldinner\inner
\def\inner{\@ifstar{\oldinner}{\oldinner*}}
\makeatother

\usepackage[mathlines]{lineno}
\definecolor{linenumbergray}{gray}{0.48}

\begin{document}

\title[Transference theory for the trilinear restriction estimate and an application to the quadratic NLS in 3D]{Transference theory for the trilinear restriction estimate and an application to the quadratic NLS in 3D}

\author[I.~Bejenaru]{Ioan Bejenaru} \address[I.~Bejenaru]{Department
  of Mathematics, University of California, San Diego, La Jolla, CA
  92093-0112 USA} \email{ibejenaru@ucsd.edu}

\author[V.~Borges]{Vitor Borges} \address[V.~Borges]{Department
  of Mathematics, University of California, San Diego, La Jolla, CA
  92093-0112 USA} \email{vborges@ucsd.edu}

\author[T.~Chan]{Tiklung Chan} \address[T.~Chan]{Department
  of Mathematics, University of California, San Diego, La Jolla, CA
  92093-0112 USA} \email{tic017@ucsd.edu}

\begin{abstract} 
In this article we develop a theory for effectively transferring trilinear restriction estimates for the Schr\"odinger equation to functions in a more general resolution space adapted to the Schr\"odinger equation. As an application we provide a global theory for the quadratic NLS in three spatial dimensions in the critical Sobolev space $\dot{H}^{-\frac{1}{2}}$. 
\end{abstract}

\subjclass{35Q41, 35Q55 (Primary), 42B37 (Secondary)}
\keywords{Nonlinear Schr\"odinger equation, trilinear restriction estimate}

\maketitle

\setcounter{tocdepth}{1}
\tableofcontents

\section{Introduction}

In this paper we introduce a new method for solving nonlinear Schrödinger equations which is based on the trilinear restriction estimate. The general analysis of nonlinear dispersive PDEs relies heavily on linear estimates (i.e. Strichartz estimates) and bilinear estimates, in particular the bilinear $L^2$ estimate. These estimates are based on transferring estimates known for free waves, for example linear and multilinear restriction estimates, to more general functions in a resolution space where solutions to the underlying nonlinear PDE can be constructed. Transference of linear Strichartz estimates is by now standard. Transference of the bilinear $L^2$ estimate has been made possible by the development of delicate resolution structures such as the $X^{s,b}$ spaces, the $U^2,V^2$ structures adapted to the underlying dispersive flow, or using adapted frames. A more complete exposition of these spaces and applications is provided in Section \ref{s:GA}. 

The first step for a transference theory for trilinear restriction estimates is to establish the estimate for free waves. Recent developments in harmonic analysis, particularly the introduction of the polynomial partitioning method in the study of the restriction conjecture by Guth \cite{Gu-I,Gu-II}, have provided an opening for understanding the trilinear and higher order multilinear restriction estimates. These arguments typically establish ``broad" versions of the restriction estimate, which are well-suited for applications to restriction theory. However, for applications to nonlinear PDEs one needs the genuine multilinear estimates. In \cite{Be-mrec}, the first author adapted these ideas to prove the $(n-1)$-linear restriction estimate in $\R^n$. 

To keep the exposition simpler and more concrete, we assume that we work in $\R^4=\R^3\times\R$, that is three spatial dimensions and one temporal dimension, and focus on the case of paraboloids/Schrödinger equations; this is also the relevant context for the application to the quadratic NLS. In this setting we have the following result.
 
\begin{theorem}[{\cite[Theorem 1.2]{Be-mrec}}] \label{TSS}
Let $f_1, f_2, f_3$ be supported in sets $\Omega_1, \Omega_2, \Omega_3\subset\R^3$ satisfying the transversality condition
\begin{equation} \label{Nres}
|N(\xi_1) \wedge N(\xi_2) \wedge N(\xi_3)| \ges 1, \quad \forall \xi_i \in \Omega_i, i=1,2,3,
\end{equation}
where $N(\xi)$ is the unit normal to the paraboloid $\tau=\xi^2$ in $\R^4$. Then for any $p>\frac{14}{15}$ the following holds true
\begin{equation} \label{TE-b}
    \norm{e^{-it\Delta} f_1 \cdot e^{-it\Delta} f_2 \cdot e^{-it\Delta} f_3}_{L^p(\R^4)}\lesssim \norm{f_1}_{L^2(\R^3)} \norm{f_2}_{L^2(\R^3)} \norm{f_3}_{L^2(\R^3)}
\end{equation}
\end{theorem}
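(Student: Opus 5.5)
The estimate is imported from \cite[Theorem 1.2]{Be-mrec}, so I will only sketch how it is obtained there. The plan is to combine the polynomial partitioning method of Guth \cite{Gu-I,Gu-II} with the multilinear (Kakeya-based) restriction estimate of Bennett--Carbery--Tao. The argument runs as an induction on scales.

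\textbf{Reduction.} By rescaling and the transversality condition \eqref{Nres}, we may take $f_i$ supported in unit-size, well-separated caps of the paraboloid. By Tao's $\epsilon$-removal argument, it then suffices to prove the localized estimate
\begin{equation}
\norm{\prod_{i=1}^3 e^{-it\Delta}f_i}_{L^p(B_R)}\les_\epsilon R^{\epsilon}\prod_{i=1}^3\norm{f_i}_{L^2}
\end{equation}
on balls $B_R\subset\R^4$. This is needed for some $p$ slightly below the target, and the $\epsilon$-loss is then removed at the cost of raising $p$ to any exponent above $\frac{14}{15}$.

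\textbf{Wave packets and partitioning.} Each $e^{-it\Delta}f_i$ is decomposed into wave packets on tubes of radius $R^{1/2}$ and length $R$. One chooses a polynomial of degree $D$ in $\R^4$ whose zero set $Z$ equipartitions the $L^p$ mass of the trilinear product among the cells.

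\textbf{Cellular case.} In this case most of the mass lies inside the cells. Each tube meets only $O(D)$ cells, so summing the $L^2$ norms over the $\sim D^4$ cells and applying the induction hypothesis in each cell closes the induction when $p$ is large enough relative to the degree gain.

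\textbf{Algebraic case.} In this case the mass concentrates near the wall, the $R^{1/2}$-neighbourhood of $Z$. The wave packets there split into two groups:
\begin{itemize}
\item \emph{tangential} packets, which lie along $Z$;
\item \emph{transverse} packets, which cross $Z$.
\end{itemize}
The transverse contributions are handled by induction, using the fact that each tube crosses $Z$ only $O(D)$ times.

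The tangential contributions are the main obstacle, and this is where the genuinely trilinear structure must be used rather than a ``broad'' estimate. Near a point of the three-dimensional variety $Z$, tangential tubes have directions $N(\xi)$ lying in the three-dimensional tangent space. Transversality of three normals inside that space yields a trilinear Kakeya/restriction gain of the type $L^{2/(n-2)}$ in effective dimension $3$.

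To exploit this, I would iterate the partitioning inside $Z$ to reach lower-dimensional varieties. On a two-dimensional variety, three transversal normals cannot all be tangent. Hence at least one factor is forced to be transverse, and that factor can be estimated with the bilinear $L^2$ estimate combined with an $L^\infty$ or Strichartz bound on the remaining factor.

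Balancing the exponents from the cellular, transverse and tangential regimes should produce the threshold $p>\frac{14}{15}$. The hardest part, as in \cite{Be-mrec}, is making this multi-level bookkeeping genuinely multilinear, that is, keeping all three factors transverse throughout the induction rather than passing to broad norms.
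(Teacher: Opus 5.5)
Like the paper, you simply import the estimate from \cite[Theorem 1.2]{Be-mrec}; the paper gives no argument of its own beyond remarking that \cite{Be-mrec} adapts Guth's polynomial partitioning to genuinely (rather than broadly) multilinear estimates, with the trilinear $L^1$ bound ``native'' to $\R^3$ and curvature in $\R^4$ giving extra room, and your outline is consistent with that. Your sketch should not be read as a proof, however: the mixed tangent/transverse products, where the tangential factors give no $L^2$ gain when the induction is run on smaller balls, are exactly the genuinely multilinear difficulty that \cite{Be-mrec} resolves and you leave open, and the bilinear-$L^2$-plus-Strichartz bound you propose for the two-dimensional stage cannot by itself give an $R^{\epsilon}$ estimate in $L^p$ with $p<1$ (globally it is an $L^{5/4}$ estimate, and H\"older on the wall into $L^p$, $p<1$, costs a positive power of $R$).
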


In this paper we develop a transference theory which allows this trilinear estimate to be deployed in the analysis of nonlinear PDEs. In this context, what is actually needed is a trilinear estimate in $L^1(\R^4)$ but in order to obtain it one relies on the stronger estimate above in $L^p$ with $p<1$. Precisely, we prove a trilinear estimate for the low modulation components of functions in  $V^2_\Delta$.

\begin{theorem}\label{thm trilinear transference}
Let $u, v, w \in V^2_\Delta$ such that $Q_{\les 1} u, Q_{\les 1} v, Q_{\les 1} w$ have localization properties similar to the ones in Theorem \ref{TSS}. Then  the following estimate holds true:
\begin{equation} \label{TV2s}
    \norm{Q_{\les 1}u \cdot Q_{\les 1}v \cdot Q_{\les 1}w}_{L^1(\R^4)}\lesssim \norm{u}_{V^2_\Delta} \norm{v}_{V^2_\Delta} \norm{w}_{V^2_\Delta}.
\end{equation}
\end{theorem}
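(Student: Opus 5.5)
Since $V^2_\Delta$ is not an atomic space, the plan is to work with $U^p_\Delta$ atoms for some $p>2$ and use the embedding $V^2_\Delta\subset U^p_\Delta$, valid for every $p>2$. Atoms are step functions of free waves, and the $L^p$ estimate \eqref{TE-b} with exponent $<1$ lets us sum over the steps. There is one real complication: the quasi-norm $L^q$, $q<1$, is not a norm, so the triangle inequality fails. Instead we use the $q$-triangle inequality $\norm{\sum F_j}_{L^q}^q\le\sum\norm{F_j}_{L^q}^q$.

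\textbf{Step 1: $U^q$-atoms for $q<1$.} Fix $q\in(\frac{14}{15},1)$. Take $u=\sum_k 1_{[t_k,t_{k+1})}(t)e^{-it\Delta}\phi_k$ with $\sum_k\|\phi_k\|_{L^2}^2\le1$, and similarly $v$ with partition $\{s_l\}$ and $w$ with partition $\{r_m\}$. On the common refinement of the three time partitions, the product $uvw$ is a sum over interval pieces of (time-truncated) products of free waves. Applying the $q$-triangle inequality and \eqref{TE-b} on each piece gives
\[
\norm{uvw}_{L^q}^q\les\sum_{\text{pieces}}\|\phi_k\|^q\|\psi_l\|^q\|\chi_m\|^q .
\]
Each free-wave square-sum is bounded, but the number of pieces is unbounded, so this only yields control when the atoms have a bounded number of steps. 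The honest goal is therefore the $L^q$ estimate for products of $U^{p}$-type functions with a loss logarithmic in the number of steps. This will be converted to $L^1$ in Step 3.

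\textbf{Step 2: the modulation cutoff.} Because of the $Q_{\les1}$ projections, the inputs are smooth on the unit time scale. Concretely, $Q_{\les1}u$ is a time convolution of $e^{it\Delta}u$ with a Schwartz kernel of width $1$, followed by $e^{-it\Delta}$. So on each unit time interval $I$ the function $Q_{\les 1}u$ is essentially a superposition of free waves with data of size $\les\|u\|_{V^2}$ localized near $I$. We decompose $\R^4=\bigcup_I \R^3\times I$ into unit time slabs. On each slab we bound the trilinear product in $L^1$ by Hölder on the compact time region: $L^1(\R^3\times I)$ is controlled by local $L^q$ norms together with a bound on the spatial spread. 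The simplest route uses the fact that compact frequency support makes $|uvw|$ locally constant at scale $1$, and this allows a reverse Hölder/Bernstein step $\|F\|_{L^1(Q)}\les\|F\|_{L^q(\tilde Q)}$ on unit cubes, with tails. The slab contribution is then at most
\[
\Big(\sum_{\text{cubes}}\|F\|_{L^q(\tilde Q)}^q\Big)^{\cdots}
\]
and, by $\ell^q\subset\ell^1$ in the wrong direction, we instead sum the $L^1$ slab pieces directly.

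\textbf{Step 3: summing over time via $V^2$ variation.} Write $Q_{\les1}u(t)=e^{-it\Delta}\sum_I a_I$, where $a_I$ captures the variation of $e^{it\Delta}u$ across $I$ convolved with the kernel. The $V^2$ norm controls $\big(\sum_I\|a_I\|_{L^2}^2\big)^{1/2}$ together with a uniform bound. Expanding the trilinear product into free waves $e^{-it\Delta}a_I$, $e^{-it\Delta}b_J$, $e^{-it\Delta}c_K$ localized near the slabs $I,J,K$, rapid kernel decay allows us to restrict to $|I-J|,|J-K|\les1$ up to tails. Applying \eqref{TE-b}, localized to a slab, at an exponent $q<1$ and then upgrading to $L^1$ via the locally constant property, each diagonal term is bounded by $\|a_I\|\|b_J\|\|c_K\|$. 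Summing with Cauchy–Schwarz and the trivial $\sup$ bound gives $\les\|u\|_{V^2}\|v\|_{V^2}\|w\|_{V^2}$.

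The main obstacle is this last summation. A pure diagonal sum of three square-summable sequences is fine, but the off-diagonal contribution, where one function is essentially constant over many slabs, needs the full global estimate \eqref{TE-b}. To control it, I would split each $u$ into a $U^2$ part and a remainder with geometrically decaying norms (the standard $V^2\to U^p$ interpolation lemma). I would then use the gain $q<1$ to absorb the logarithmic number of steps in the resulting dyadic sum.
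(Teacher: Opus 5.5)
There is a genuine gap. You never obtain the two estimates whose interpolation is the actual proof, and the substitutes you propose do not close.

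First, in Step 1 the loss from the $q$-triangle inequality is a power of the number of steps, not a logarithm. A dyadic piece of the $V^2$ decomposition behind \eqref{V2U2q} (see \cite[Proposition 2.5]{HHK}) has $\approx 2^{2n}$ steps of $L^2$-size $\approx 2^{-n}$. Your bound then gives $\|T\|_{L^q}^q\les 2^{2n}2^{-nq}$ for the trilinear product $T$. Even combined with the sup bound $\|T\|_{L^\infty}\les 2^{-n}$, this only yields $\|T\|_{L^1}\le\|T\|_{L^q}^q\|T\|_{L^\infty}^{1-q}\les 2^{n}$, which cannot be summed. The missing idea is the lossless transference of the sub-$L^1$ estimate to $U^2_\Delta$ atoms by random signs (the paper's appendix, following Candy). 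Pointwise, $|u|\le(\sum_I|e^{-it\Delta}\phi_I|^2)^{1/2}\approx(\mathbb{E}|\sum_I\epsilon_I e^{-it\Delta}\phi_I|^q)^{1/q}$. Because the $q$-th power of the quasi-norm is an integral of $|\cdot|^q$, the expectation can be pulled outside. One then applies \eqref{TE-b} to the single free wave with data $\sum_I\epsilon_I\phi_I$, whose expected $L^2$ norm is $\les(\sum_I\|\phi_I\|_{L^2}^2)^{1/2}$. Iterating over the three factors gives \eqref{me1}, with no dependence on the number of steps.

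Second, Step 3 fails as written. The increments $a_I$ are cumulative, so $e^{-it\Delta}a_I$ is present at all times after $I$. There is therefore no reduction to near-diagonal triples; the ``off-diagonal'' part is the entire estimate. Nor can you invoke the standard logarithmic $V^2\to U^2+U^p$ interpolation. It presupposes an independent global $U^p$ bound for the same $L^1$ quantity, and Strichartz estimates cannot produce the trilinear $L^1$ bound.

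The correct partner is a \emph{local} estimate: the $\ell^\infty_{\mathcal P}L^1$ bound \eqref{me2} in $U^a_\Delta$. It comes from \eqref{sme}, which is the precise form of your Step 2 observation. At unit scale it is essentially Bernstein, $\|T\|_{L^1(P)}\les\prod\|\cdot\|_{L^\infty_tL^2_x}$ on unit cubes $P$. You then combine \eqref{me1} and \eqref{me2} through $\|F\|_{\ell^1_{\mathcal P}L^1}\le\|F\|_{\ell^q_{\mathcal P}L^1}^q\|F\|_{\ell^\infty_{\mathcal P}L^1}^{1-q}$. For a $U^a_\Delta$ atom with $N$ equal steps this gives $N^{q(\frac12-\frac1a)}N^{-\frac{1-q}{a}}=N^{\frac q2-\frac1a}$, which is bounded for $a=\frac2q$. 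The embedding $V^2_\Delta\subset U^{2/q}_\Delta$ then concludes.

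Equivalently, the dyadic pieces $v_n$ satisfy $\|v_n\|_{U^2}\les 1$ and $\|v_n\|_{L^\infty L^2}\les 2^{-n}$, so one gets $\|T(v_{n_1},v_{n_2},v_{n_3})\|_{L^1}\les 2^{-(1-q)(n_1+n_2+n_3)}$. The role of $q<1$ is thus to turn the decay of the sup norms into geometric decay, not to absorb a count of steps. Also, $\ell^q\subset\ell^1$ for $q<1$ is the favorable direction. Passing from $\ell^q L^1$ to $L^1$ is therefore free for free waves and $U^2$ atoms; the only difficulty is $V^2$.
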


The operator $Q_{\les 1}$ localizes in modulation, that is $|\tau-\xi^2| \les 1$, and it is defined in Section \ref{subsect:not}. Thus, the theorem provides, in some sense, only a partial result since we do not obtain the full estimate for the term $\norm{ u \cdot v \cdot w}_{L^1(\R^4)}$. However, this is not a problem at all in applications, since, armed with \eqref{TV2s}, one is left with the analysis of the trilinear expression $\norm{ u \cdot v \cdot w}_{L^1(\R^4)}$ under the additional assumption that at least one term has high modulation ($\ges 1$). In this case the linear Strichartz and bilinear $L^2$ estimates become more effective. It is precisely in the interaction  $Q_{\les 1}u \cdot Q_{\les 1}v \cdot \overline{Q_{\les 1}w}$ where the resonances live, and the novelty of our method is in being able to obtain the estimate for this term. 

The space $V^2_\Delta$ is defined in section \ref{s:RS}. It is important that the trilinear term is controlled in terms of $V^2_\Delta$ norms only as this makes the estimate very effective in the construction of solutions. It is rather standard to obtain \eqref{TV2s} using the stronger norms $U^2_\Delta$ instead of $V^2_\Delta$ - see for example \cite{Ca, Ca-note, CaHe} - and we provide a proof of this in the appendix. However, this alone would not be enough in applications to PDE.

Now we come to a rather important point; for technical reasons, we actually need a more general form of the above result, see Theorem \ref{trilml}. While Theorem \ref{TV2s} contains one of the new and powerful transference techniques, it is significantly easier than what we actually need in this paper where (after rescaling arguments) it suffices to estimate the term $\norm{Q_{\les 2k}u_k \cdot Q_{\les 2k}v_0 \cdot Q_{\les 2k}w_0}_{L^1(\R^4)}$, where $u_k$ is localized at frequency $2^k \ll 1$, $v_0, w_0$ are localized at frequency $\approx 1$. The main obstacle in estimating this term is that the analogue of \eqref{Nres} in this set-up has a lower bound of $2^k \ll 1$, which represents a degenerate transversality condition for the trilinear restriction estimate. In order to deal with this, we use a standard parabolic rescaling argument to transform the frequency supports into a set-up with transversality constant $\gtrsim 1$. The price to pay is that one term, while still on the paraboloid, moves into the regime with $|\xi| \gg 1$ and here we now encounter degenerate curvature instead. While the full version of \cite[Theorem 1.2]{Be-mrec} for general hypersurfaces is rather robust in that it assumes a minimal amount of curvature information, it is actually not enough for this concrete application we have in mind, except in the case $2^k \approx 1$. In a companion paper \cite{BeCh} we augment the theory in \cite{Be-mrec} to account for even more degeneracy in the curvature information on the underlying hypersurfaces. In fact, because of these degenerate curvature assumptions, even the nature of \eqref{TE-b} changes, see Theorem \ref{ltri1}. 

As an application of this theoretical framework, we have identified the quadratic NLS in three spatial dimensions to be the simplest nonlinear model where we can effectively deploy our new theory. We consider the following quadratic NLS
\begin{equation} \label{QNLS}
(i \partial_t - \Delta) u = c_1 u^2 + c_2 \bar u^2, \quad u(0,x)=u_0(x), 
\end{equation}
where $u: \R^n \times \R \rightarrow \C$ and $c_1,c_2\in\C$. The natural scaling for this equation is $s_c=\frac{n}{2}-2$. 

\eqref{QNLS} has been studied extensively, with works covering a variety of problems such as local well-posedness, global well-posedness, scattering,and ill-posedness. See for instance \cite{bejenaru_sharp_2006, BeDa, CaWe, colliander_bilinear_2001, KeGuVe, MuToTa, NaTaTs, Tsu, tao_multilinear_2001}, and the references therein. Below we describe some of the results most relevant to our current work.

In dimensions $n \geq 4$, global well-posedness for \eqref{QNLS} in the critical space was established by Cazenave and Weissler in \cite{CaWe}. The theory is essentially based on linear Strichartz estimates. The situation changes substantially once $n=3$. Assuming initial data in $H^s(\R^3)$, there are only local (in time) well-posedness results. Local well-posedness for $s > -\frac12$ was established by Tao in \cite{tao_multilinear_2001} using a generalization of the bilinear $X^{s,b}$ estimates from \cite{colliander_bilinear_2001} to higher dimensions. The major obstacle in developing a global in time theory in Sobolev spaces is the lack of an effective way to deal with resonances which are present in this problem.  

On the other hand, for regular data with sufficient spatial decay, the corresponding free solution has temporal decay $\approx t^{-\frac32}$ which is integrable in time. This suggests that a global theory may be within reach for such initial data. 

In \cite{HaNa}, Hayashi and Naumkin established the first global well-posedness and scattering  result for \eqref{QNLS} for small data with $H^{10}$ regularity and decay of type $\la x \ra^{10} u_0 \in L^2$. In \cite{HaMiNa}, Hayashi, Mizumachi, and Naumkin further refined the result by lowering the regularity and decay requirements, as well as providing additional decay properties of the corresponding solution and, in \cite{Ka} Kawahara further refined a decay property of the solutions. All these works rely on the vector field method. 

In \cite{GeMaSh-1}, Germain, Masmoudi, and Shatah introduced the space-time resonance method precisely in the context of the quadratic NLS model in 3D. In this paper, the authors established global well-posedness and scattering for \eqref{QNLS} with initial data requiring less regularity and decay than the above mentioned works; a precise quantification of the level of regularity and decay imposed on the data is not straightforward, given that their conditions on the initial data are made in a non-standard fashion. 

To the best of our knowledge, as explained above, all the global (in time) results in 3D for \eqref{QNLS} are in the context of data that have high regularity and some spatial decay.

The second main result of this paper is a proof of global well-posedness and scattering for \eqref{QNLS} in the critical Sobolev space.

\begin{theorem}\label{thm gwp}
The equation \eqref{QNLS} with small initial data $u_0 \in \dot H^{-\frac12}(\R^3)$ is globally well-posed, scatters to free solutions, and has Lipschitz dependence on its initial data. The solution is also unique in the resolution space $\dot X^{-\frac12} \subset C_t \dot H^{-\frac12}$. 

Moreover, if the initial data is more regular, say $u_0 \in H^s(\R^3)$ for some $s>0$ and its critical norm $\|u_0\|_{\dot H^{-\frac12}(\R^3)}$ is small enough, then the global solution $u(t)$ preserves the regularity.
\end{theorem}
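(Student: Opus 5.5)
We will prove Theorem \ref{thm gwp} by a standard fixed point argument in a critical resolution space $\dot X^{-\frac12}$ built from $V^2_\Delta$-type atoms. Concretely, we take $\dot X^{-\frac12}$ to be the space of $u$ with Littlewood–Paley pieces $P_k u \in V^2_\Delta$ (or a $U^2/V^2$ hybrid) and
\[
\|u\|_{\dot X^{-\frac12}}^2=\sum_k 2^{-k}\|P_k u\|_{V^2_\Delta}^2 .
\]
We pair it with a dual nonlinearity space $\dot N^{-\frac12}$ chosen so that the linear estimate
\[
\Big\|\int_0^t e^{-i(t-s)\Delta}F(s)\,ds\Big\|_{\dot X^{-\frac12}}\les \|F\|_{\dot N^{-\frac12}}
\]
holds. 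Using the $U^2$–$V^2$ duality, we can take $\dot N^{-\frac12}$ to be characterized by testing against $\dot X^{\frac12}$-type functions. The embedding $V^2_\Delta\subset L^\infty_t L^2_x$ gives $\dot X^{-\frac12}\subset C_t\dot H^{-\frac12}$. Scattering follows from the existence of limits of $e^{it\Delta}u(t)$ for $V^2$ functions, after a slight $U^2$ strengthening if needed. The whole result then reduces to the bilinear estimate
\[
\|u v\|_{\dot N^{-\frac12}}+\|\bar u\bar v\|_{\dot N^{-\frac12}}\les \|u\|_{\dot X^{-\frac12}}\|v\|_{\dot X^{-\frac12}} .
\]
By duality this becomes a trilinear bound on $\int u\, v\, w\,dx\,dt$, with a conjugate on $w$ or on $u,v$ depending on the term, where $w\in V^2_\Delta$ at regularity $\frac12$. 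Once it holds, the contraction in a small ball gives existence, uniqueness in $\dot X^{-\frac12}$, and Lipschitz dependence.

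To prove the trilinear bound, we dyadically decompose $u_{k_1},v_{k_2},w_{k_3}$; frequency support forces the two largest frequencies to be comparable. We then rescale so that the top frequency is $\approx 1$. The cases are:
\begin{enumerate}[(i)]
\item \emph{High–high–high.} When all three frequencies are $\approx 1$, we split further into angular/cube localizations. Transverse triples are handled by Theorem \ref{thm trilinear transference}, which controls the part where all three factors have modulation $\les 1$. Nontransverse or clustered configurations are handled by bilinear $L^2$ estimates, using the smallness of the interaction region.
\item \emph{Low–high–high.} When the low frequency is $2^k\ll 1$, we invoke the generalized degenerate transference estimate (Theorem \ref{trilml}, resting on Theorem \ref{ltri1}). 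This bounds $\norm{Q_{\les 2k}u_k \cdot Q_{\les 2k}v_0 \cdot Q_{\les 2k}w_0}_{L^1}$ with a gain $2^{\epsilon k}$ beyond what is needed to sum against the weights $2^{-k/2}$.
\item \emph{High modulation.} Whenever at least one factor has modulation above the threshold ($\ges 1$, resp. $\ges 2^{2k}$), we use that high-modulation components of $V^2_\Delta$ functions satisfy $\|Q_{\ge \mu}u\|_{L^2}\les \mu^{-1/2}\|u\|_{V^2_\Delta}$. We place that factor in $L^2$ and the other two in the bilinear $L^2$ refinement or in Strichartz norms. The quadratic nonresonant structure (for $\bar u^2$ and most of $u^2$) gives the modulation lower bound needed here.
\end{enumerate}

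The summation over dyadic scales must come out $\ell^2$-summable with off-diagonal decay. We expect this to hold because critical scaling makes all pieces dimensionless, and the gains $2^{\epsilon k}$ from (ii) and from the modulation losses in (iii) supply the decay.

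The main obstacle is case (ii) at low modulation, i.e. the resonant interaction $u\cdot v\cdot \bar w$ with one low frequency. Here the transversality degenerates like $2^k$, and $V^2$ functions defeat a naive atomic decomposition. We would first try to show that the degenerate estimate of Theorem \ref{trilml} yields exactly a factor $2^{k/2+\epsilon k}$ after rescaling. If it yields only $2^{k/2}$, we would combine it with the bilinear $L^2$ estimate through interpolation to recover a small power.

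For persistence of regularity, we run the same estimates in $\dot X^{-\frac12}\cap X^{s}$. The bilinear estimate holds with the $X^s$ norm placed on the higher-frequency factor, which gives a linear-in-the-high-norm bound, so smallness of the critical norm alone closes the bootstrap.
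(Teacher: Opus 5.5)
Your overall architecture matches the paper's: a contraction in $\dot X^{-\frac12}$ built on $V^2_\Delta$, duality reducing everything to dyadic trilinear bounds, nonresonance via \eqref{lmi} and \eqref{barmod}, and Theorem \ref{trilml} for the fully low-modulation resonant piece. The genuine gap is in your case (iii) when the high modulation sits on the \emph{low}-frequency factor.

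Take the resonant configuration $\int u_k v_0 \bar w_0$ with $u_k$ at modulation $2^j$, $2k\preceq j\preceq k$. Putting $Q_j u$ in $L^2$ gives $2^{-j/2}$. For the remaining pair the best available bound is $\|\tilde P_k(v\bar w)\|_{L^2}\les 2^{k/2}$, from Bernstein plus $L^4_tL^3_x$ Strichartz. A bilinear $L^2$ bound does no better here: on $2^k$-cubes the two normals are only $2^k$-transverse and the intersections have area $2^{2k}$. The total is $2^{(k-j)/2}$, i.e.\ $2^{-k/2}$ at $j\approx 2k$. This exactly cancels the $2^{k/2}$ supplied by the $\dot X^{-\frac12}$ weights. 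With no off-diagonal decay left, the dyadic sum would need $\ell^1$ control of $2^{-k/2}\|P_k z\|_{V^2_\Delta}$, which $\dot X^{-\frac12}$ does not give. The paper explicitly notes that classical methods give a worse factor in \eqref{trihml}.

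The missing idea is Lemma \ref{prop:ref-strichartz-2}. You may assume $v,w$ are at modulation $\prec 2k$, since otherwise \eqref{trihmh} applies. Split them into $2^k$-cubes and then into time-frequency intervals of length $2^j$. This localizes their spatial Fourier supports to $2^j\times 2^k\times 2^k$ slabs, on which $\|e^{-it\Delta}f\|_{L^4_{t,x}}\les 2^{j/4}\|f\|_{L^2}$ (transferred to $U^4_\Delta$). Each piece then contributes $2^{-j/2}\,2^{j/4}\,2^{j/4}=1$. The pieces are square-summed via Lemma \ref{l2V2} at cost $\la j\ra^2$; this is needed because $V^2$ lacks the decomposition property \eqref{U2dec}. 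Summing over $j$ gives $\la k\ra^3$, which the weights beat.

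A milder version of the same issue arises in the high--high-to-low case $\int u_0v_0\bar w_k$ with $w_k$ at modulation $\ges 1$. There Strichartz gives only $\|\tilde P_k(u_0v_0)\|_{L^2}\les 2^{k/2}$, again exactly cancelled by the output weight $2^{-k/2}$. Neither Strichartz nor \eqref{V2bil} suffices; you need the specific bound \eqref{impr}. It gives $2^{k}$ for free waves, because the normals at $\pm\eta_0$ are $\approx 1$-transverse, and $2^{k(1-\epsilon/2)}$ after interpolation into $V^2_\Delta$.

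Two smaller points:
\begin{enumerate}
\item Theorem \ref{trilml} gives a \emph{loss} $2^{-\varepsilon k}$, not a gain. The decay $2^{(\frac12-\varepsilon)k}$ comes from the weights, so the interpolation fallback in your last paragraph is unnecessary.
\item In case (i) you need no separate treatment of nontransverse triples. Modulation $\ll 1$ already forces $\xi\cdot\eta\approx 0$, hence transversality $\approx 1$; every other configuration has a high-modulation factor.
\end{enumerate}
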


We remark that among the two components of the nonlinearity in \eqref{QNLS}, $(\bar u)^2$ is significantly easier, given the lack of resonances. This is why the paper focuses mostly on the $u^2$ component. The reader may also notice that we do not include the other quadratic component $|u|^2=u \cdot \bar u$ in our analysis. This nonlinearity is known to be rather difficult to handle. In \cite{tao_multilinear_2001}, Tao proved local well-posedness for initial data in $H^s(\R^3)$ when $s>-\frac{1}{4}$. In \cite{ginibre_almost_1995}, Ginibre and Hayashi proved almost global existence in weighted Sobolev spaces using the vector field method. To the best of our knowledge, the global well-posedness problem for this nonlinearity is still open, even for localized and smooth data using the space-time resonance method. In this context, we should also mention the work \cite{guo_global_2025}, where Guo, Liu, and Song proved global well-posedness and scattering for the fractional Schrödinger equation $(i\partial_t-D^\alpha)u=\abs{u}^2$ where $\alpha\in(1,2)$ using the method of space-time resonances. The main difficulty for this nonlinearity is the high-high-to-low interaction which is difficult to control - see Section \ref{s:RA} for some more details on this.

Finally, we remark that after reading this introduction, the flow of the rest of our paper may seem inverted as the quadratic NLS takes the center stage, while the trilinear restriction estimate and the transference theory may be seen as supporting results. There is a good reason for this. The result in Theorem \ref{TSS} and its transference were the starting point of this work; however when working on the quadratic NLS we discovered the need for a far more general version of \eqref{Nres}  not covered by the work of the first author in \cite{Be-mrec}. This  motivated the work in the companion paper \cite{BeCh}, by the first and third authors, which augments that theory so that it delivers the necessary tools to complete the analysis of the quadratic NLS \eqref{QNLS}. In nonlinear PDEs the practical problem at hand dictates the specific multilinear restriction estimate that is needed along with the transference; in our particular case,  even the formalization of \eqref{TE-b} needs nontrivial adjustments.

We do believe that the strategy developed here should work in other nonlinear problems where resonances are present and the classical methods based on linear and bilinear estimates are not strong enough to close the analysis. We provide a robust method of transferring trilinear restriction estimates, hence the central problem remains that of deriving the key trilinear restriction estimate tailored to the particular setup considered. We think that the work in \cite{Be-mrec} and \cite{BeCh} provide enough guidance on how this can be approached, if not already covered by the theory in those papers. 

\subsection{Notation} \label{subsect:not}
We define $A\les B$, if there is a harmless constant $c>0$ such that $A\leq c B$, and $A\ges B$ iff $B\les A$. Further, we define $A\approx B$ iff both $A\les B$ and $B\les A$. Also, we define $A\ll B$ if the constant $c$ can be chosen such that $c<2^{-10}$. Also, $A\gg B$ iff $B\ll A$.

Similarly, we define $A\preceq B$ iff $2^A\les 2^B$, $A\succeq B$ iff $2^A\ges 2^B$, $A \sim B$ iff $2^A\approx 2^B$, $A\prec B$ iff $2^A\ll 2^B$, $A\succ B$ iff $2^A\gg 2^B$.

The Fourier transform of a Schwartz function $f:\R^n \rightarrow \C$ is defined by
\[
\mathcal{F}(f)(\xi)=\hat f(\xi)= \int e^{-i x \cdot \xi} f(x) dx. 
\]
The inverse Fourier transform is defined by 
\[
\mathcal{F}^{-1}(g)(x)=\check g(x)= \frac1{(2\pi)^n} \int e^{i x \cdot \xi} g(\xi) d\xi.
\]
These definitions are then extended to distributions, in particular to $L^p(\R^n)$ spaces, in the usual manner.

Let $\rho^0\in C^\infty_c(-\frac85,\frac85)$ be a fixed smooth, even, cutoff
satisfying $\rho^0(s)=1$ for $|s|\leq \frac54$ and $0\leq \rho\leq 1$. For
$k \in \Z$ we define $\rho_k: \R^n \to \R$,
$\rho_k(y):=\rho^0(2^{-k}|y|)-\rho^0(2^{-k+1}|y|)$. 
Let $\tilde{\rho}_k=\rho_{k-1}+\rho_k+\rho_{k+1}$ and notice that $\tilde{\rho}_k \cdot \rho_k=\rho_k$. For $k \in \Z$, let $P_k$ be the Fourier multiplier with symbol with respect to $\rho_k$ and  $\tilde P_k$ be the Fourier multiplier with symbol with respect to $\tilde{\rho}_k$; we record the identity $\tilde P_k P_k = P_k$. We say that $f$ is localized at frequency $2^k$ if $\hat f$ is supported in the annulus $\{\xi: 2^{k-1} \leq |\xi| \leq 2^{k+1}\}$. We note that $P_k f$ is localized at frequency $2^k$ and that $\tilde P_k f = f$ for any $f$ that  is localized at frequency $2^k$.  We also define $P_{\leq k}=\sum_{0\leq k'\leq k }P_{k'}$, and $P_{> k}=I-P_{\leq k}$. 

In the analysis of the Schr\"odinger equation it is common to use the modulation localization operator. For $j \in \Z$ we define
\[
\mathcal{F}[Q_{j}f](\tau,\xi)=\rho_j(\tau -  \xi^2)\mathcal{F}f(\tau,\xi),
\]
where we recall that $\rho_j$ is the function localizing $|\tau - \xi^2| \approx 2^j$. Similarly we define the operators $Q_{\leq j}, Q_{\prec j}$ etc.  

For $k \in \Z$, we let $\mathcal{I}_k=\{ [2^{k}l,2^{k}(l+1)]: l \in \Z\}$ is the set of intervals of length $2^{k}$ covering $\R$; these are meant to be time interval throughout this paper. Also, $\mathcal C_k$ is the set of cubes of side-length $2^{k}$ on the physical side covering $\R^n$ - this can be obtained by considering the standard $\Z^n$ lattice in $\R^n$ and rescale it by $2^{k}$. 

Given an interval $I \subset \R, |I| >0$, a bump function  $\varphi$ adapted to $I$ satisfies the following bounds
\[
| ((t-c(I)) |I|^{-1})^\alpha (|I| \partial_t)^\beta \varphi | \les_{\alpha,\beta} 1,
\]
for any $\alpha, \beta \in \N \cup \{0\}$. Here $c(I)$ is the center of $I$, but it can chosen to be any other point in $I$. We also denote $\chi_I$ to be the standard characteristic function of the interval $I$; we will also use the notation $\mathds{1}_I$. Since $\chi_I$ is not smooth, we let $\tilde \chi_I$ be a smooth approximation of $\chi_I$ in the sense that $\tilde \chi_I$ is a bump function adapted to $I$ with two additional properties: $\tilde \chi_I(t)=1, \forall t \in I$, thus $\tilde \chi_I \chi_I = \chi_I$, and it is supported in $2I$, the double of $I$. $2I$ is meant to be the double of $I$ from its center, that is if $I=[c(I)-\frac{|I|}2, c(I)+ \frac{|I|}2]$, the $I=[c(I)-|I|, c(I)+ |I|]$. Accordingly we will have the functions $\chi_{2I}$ and $\tilde \chi_{2I}$. 

Given a cube $q \in \R^n$ we construct in a similar manner the concept of bump function adapted to $q$, the characteristic function $\chi_q$, the smooth approximation $\tilde \chi_q$ (which is a bump function adapted to $q$, compactly supported in $2q$, the double of $q$ from its center, and with $\tilde \chi_q \chi_q =\chi_q$).

We write $\chi_q(D_x)$ and $\chi_I(D_t)$ to denote the Fourier multiplier operators with symbols $\chi_q(\xi)$ and $\chi_I(\tau)$, respectively.

\subsection{AI declaration}
All the ideas and arguments in this paper were conceived and written by the authors. No AI was used in the writing or editing of this manuscript. The authors intend to use Gemini for additional spelling and grammar checks in an updated version of this paper.

\section{Outline of the strategy}\label{s:GA}

In this section, we provide a more complete outline of how our new method fits within the context of the general perturbative analysis of dispersive PDEs, in particular the current iteration techniques based on the linear and bilinear restriction theory.

The standard road map to iterate a semilinear dispersive PDE is as follows. For concreteness, consider a general nonlinear Schr\"odinger equation:
\[
(i \partial_t - \Delta)u = N(u, \nabla u), \quad u(0,x)=u_{0}(x),
\]
where $u: \R \times \R^n \rightarrow \C$. One seeks to identify two spaces $X,Y$ (with $X$ being a Banach space) with the following two properties:

\begin{enumerate}[label=(\roman*)]
    \item Solvability of the inhomogeneous equation
    \begin{equation} \label{LXY}
        (i \partial_t - \Delta)u = f, \quad u(0,x)=u_{0}(x),
    \end{equation}
    holds true as follows
    \[
    \|u\|_{X} \les \|f\|_{Y} + \|u_0\|_{L^2(\R^n)}.
    \]
    \item Control of the nonlinearity
    \begin{equation} \label{Nuvb}
        \|N(u,\nabla u)\|_Y \les \| u \|_X^p,
    \end{equation}
    where $p$ is the ``degree" of the nonlinearity of $N(u)$; variations may appear. 
\end{enumerate}

$X$ is referred to as the resolution space for the nonlinear PDE. We ignore the regularity at which one carries out the theory in order to keep this presentation concise but we note that one typically adds regularity to $X$ either directly or by defining spaces $X_k$ meant to measure solutions localized at frequency $\approx 2^k$ and then summing the $X_k$ information (similar to how Sobolev regularity is recouped from information on the frequency localized components).

If, in addition, the resolution space satisfies $X^* \subset Y$ (in a sense that needs to be made precise) then, for the purpose of \eqref{Nuvb}, it suffices to establish instead
\begin{equation} \label{NXY}
|\la N(u, \nabla u), v \ra| = |\int N(u,\nabla u) \bar v dx dt| \les \| u \|_{X}^p \|v\|_{X}.
\end{equation}
Indeed, this guarantees that $N(u,\nabla u) \in X^* \subset Y$ with the appropriate bounds and \eqref{Nuvb} follows.

Linear and multilinear restriction estimates can provide very powerful tools for establishing estimates of the form \eqref{NXY}, especially in the translation invariant setting. For example, for the quadratic nonlinearity $N(u)=u^2$, \eqref{NXY} becomes
\begin{equation} \label{NXYu2}
|\int u^2 \bar v dx dt| \les \| u \|_{X}^2 \|v\|_{X}.
\end{equation}
From this estimate, the connection between \eqref{QNLS} and the trilinear restriction estimate may start to become clearer. In order to formalize these ideas, we start by recalling the linear and bilinear theory for general dispersive flows before discussing the trilinear restriction estimate in this context.

\subsection{Linear and bilinear restriction theory}

Linear restriction estimates, i.e. Strichartz estimates, play a fundamental role in the analysis of dispersive PDEs. For a comprehensive introduction to the topic of Strichartz estimates and their applications, the books of Cazenave \cite{Caz-book} and Tao \cite{Tao-book} are very good references.

In general, Strichartz estimates provide a very effective way of quantifying the dispersive properties of waves, especially in the translation invariant setting. For example, as described in the introduction, in dimensions $n\geq 4$, linear Strichartz estimates are strong enough to provide a perturbative theory for \eqref{QNLS} able to obtain global solutions for small data.

However, there are important nonlinear models where the Strichartz estimates do not capture all the phenomenology necessary to provide a satisfactory perturbative theory even in the case of small initial data; this is the case when the relative strength of the nonlinearity increases. One of the limitations of the Strichartz estimates comes from the fact that they quantify the dispersive properties of only a single wave.

A major improvement in the analysis comes from studying the interaction of multiple waves. Let us recall the bilinear $L^2_{t,x}$ estimate for the Schr\"odinger equation,
\begin{equation} \label{bilS}
\|e^{-it\Delta} f \cdot e^{-it\Delta} g  \|_{L^2_{t,x}} \les 2^{\frac{(n-1)k_1-k_2}2} \|f\|_{L^2(\R^n)} \|g\|_{L^2(\R^n)},
\end{equation}
where $f$ is localized at frequency $\approx 2^{k_1}$, $g$ is localized at frequency $\approx 2^{k_2}$ and $k_1 \leq k_2$. If $n=1$, \eqref{bilS} requires additional transversality assumptions when $|k_1-k_2| \les 1$, but it holds as stated if $n \geq 2$. In this paper we work in dimensions $n \geq 2$ and all the statements made from here on are meant for $n \geq 2$. 

A similar estimate holds true for the wave equation
\begin{equation} \label{bilW}
\|e^{it|\nabla|} f \cdot e^{it|\nabla|} g  \|_{L^2_{t,x}(\R^{n+1})} \les 2^{\frac{(n-1)k_1}2} \theta^{\frac{n-3}2} \|f\|_{L^2(\R^n)} \|g\|_{L^2(\R^n)},
\end{equation}
where $f,g$ are localized at frequency $2^{k_1}$, respectively $2^{k_2}$ and the angle between their support is $\approx \theta$. 

Similar estimates to \eqref{bilS} and \eqref{bilW} hold for other linear dispersive flows too. For all these estimates, the key idea is that the Fourier transform of any free solution to a dispersive equation with constant coefficients ``lives" on the characteristic surface of the equation. For example, free solutions to the Schr\"odinger equation live on the paraboloid $\{\tau-\abs{\xi}^2=0\}$, free solutions to the wave equation live on the cone $\{\tau\pm\abs{\xi}=0\}$, etc. Formally, for the Schrödinger equation for example, one can write $e^{-it\Delta} f = \int e^{i (x,t) \cdot (\xi,\tau)} \hat f(\xi) \delta_{\phi(\xi,\tau)=0} d \xi d \tau$ where $\phi(\xi,\tau)=\tau-\xi^2$. By studying the bilinear $L^2$ estimates on the Fourier side (through Plancherel), one can use the geometry of these surfaces to understand the spacetime interactions of the waves. We provide an overview of these arguments here but note that they are quite standard and can be found in many places, see for instance Chapter 5 of \cite{KTV}.

Let $\Sigma$ be a hypersurface in $\R^d$ which is a non-degenerated level set of the functions $\phi$. A natural way to define $\delta_{\phi=0}$ is given by
\[
\delta_{\phi=0} = |\nabla \phi|^{-1} d \mathcal{H}^{d-1} |_{\phi=0},
\]
where $\mathcal{H}^{d-1}$ is the $d-1$ Hausdorff measure on $\Sigma$.  We now consider two such hypersurfaces $\Sigma_1$ and $\Sigma_2$. For each $(x,y) \in \Sigma_1 \times \Sigma_2$ we let $\Sigma(x,y)=\{y+\Sigma_1\} \cap (x+\Sigma_2)$ and let $L(x,y)$ be the square root of 
\[
\int_{\Sigma(x,y)} [ |\nabla \phi_1(z-y)|^2 |\nabla \phi_2(z-x)|^2 - \la \nabla \phi_1(z-y), \nabla \phi_2(z-x) \ra^2  ]^{-\frac12} d \mathcal{H}^{d-2};
\]
Then we let
\[
L=\sup_{x \in \Sigma_1, y \in \Sigma_2} L(x,y).
\]
With the above notation, the following holds true, see Theorem 5.1 in \cite{KTV},
\begin{equation} \label{bilG}
\| f_1 \delta_{\phi_1=0} \ast f_2 \delta_{\phi_2=0} \|_{L^2(\R^d)} 
\leq L \| f_1 |\nabla \phi_1|^{-1} \|_{L^2(\Sigma_1)} \| f_2 |\nabla \phi_2|^{-1} \|_{L^2(\Sigma_2)}.
\end{equation}
The relation between \eqref{bilG} and \eqref{bilS}/\eqref{bilW} is formalized via Plancherel: instead of estimating a product in $L^2_{t,x}$ as stated in \eqref{bilS}/\eqref{bilW}, one estimates its Fourier transform (which is a convolution) in $L^2_{\tau,\xi}$. It is an easy exercise to verify that \eqref{bilS} and \eqref{bilW} are particular cases of \eqref{bilG}. 

Let us unpack the quantities that appear above. To keep things simpler we assume that, in the support of $\Sigma_1$ and $\Sigma_2$ we have $\|\nabla \phi_1\|_{L^\infty} \approx 1, \|\nabla \phi_2\|_{L^\infty} \approx 1$; this setup can be obtained via rescaling arguments in general. In this case 
\[
[ |\nabla \phi_1(z-y)|^2 |\nabla \phi_2(z-x)|^2 - \la \nabla \phi_1(z-y), \nabla \phi_2(z-x) \ra^2  ]^{-\frac12} \ges \theta^{-1},
\]
where 
\[
\theta = \inf_{x \in \Sigma_1, y \in \Sigma_2} \angle(N_x \Sigma_1, N_y \Sigma_2),
\]
and $N_x \Sigma_1$ is the normal to $\Sigma_1$ at $x \in \Sigma_1$, and similarly for $N_y \Sigma_2$. Thus, under this setup, \eqref{bilG} can be simplified to
\begin{equation} \label{bilGe}
\| f_1 \delta_{\phi_1=0} \ast f_2 \delta_{\phi_2=0} \|_{L^2(\R^d)} 
\leq \theta^{-\frac12} [\sup_{x \in \Sigma_1, y \in \Sigma_2} \mu_{\mathcal{H}^{d-2}} (\Sigma(x,y))]^\frac12 \| f_1 \|_{L^2(\Sigma_1)} \| f_2 \|_{L^2(\Sigma_2)}.
\end{equation}
The bilinear $L^2$ estimate picks out two important factors:
\begin{itemize}
    \item The transversality between $\Sigma_1$ and $\Sigma_2$, quantified by the minimum angle between normals from each surface; the estimate degenerates as $\theta \rightarrow 0$ via a factor of $\theta^{-\frac12}$

    \item The maximum size (quantified by the $\mathcal{H}^{d-2}$ measure) of the intersection between any two translates of $\Sigma_1$ and $\Sigma_2$
\end{itemize}
The above interpretation can be done without assuming the normalizations $\|\nabla \phi_1\|_{L^\infty} \approx 1$ and $\|\nabla \phi_2\|_{L^\infty} \approx 1$, but then the translation would be obscured by the additional factors of $\|\nabla \phi_1\|_{L^\infty(\Sigma_1)}, \|\nabla \phi_2\|_{L^\infty(\Sigma_2)}$. 

We recall that these bilinear estimates are for free waves, that is for solutions to the underlying linear PDE. In the analysis of nonlinear PDEs, one needs to work with more general objects (than free waves). In particular, to prove estimates of the form \eqref{NXY}, one needs to transfer these free wave estimates to more general resolution spaces - this is why designing effective resolution spaces in which solutions to the nonlinear PDE can be constructed and can also replicate the corresponding bilinear $L^2_{t,x}$ estimate has been a very important topic. In \cite{Be-rs} the first author has reviewed the relevant literature of the key resolution structures used for this purpose; we summarize it here and refer the reader to Section 1 of \cite{Be-rs} for additional details.    

\begin{itemize}

\item \bf The $X^{s,b}$ spaces. \rm These spaces appeared in the works of Beals \cite{Bea} (for the wave equation), Bourgain \cite{Bo-1, Bo-2} (for the Schr\"odinger and KdV equations) and Klainerman and Machedon \cite{KlMa} (for the wave equation) and were the first structures known to be effective in replicating estimates known for bilinear interactions of free waves.

Since the bilinear estimate is based in $L^2_{t,x}$, one can use Plancherel and run explicit computations to achieve the transference.

\item \bf The $U^2, V^2$ spaces adapted to the flow. \rm These spaces were introduced by Tataru in the context of the wave equation and later adapted to other flows, see Tataru and Koch \cite{KT, KT1,KT2},  Bejenaru and Tataru \cite{BeTa-2}, Hadac, Herr, and Koch \cite{HHK}, Herr, Tataru, and Tzvetkov \cite{HeTaTz}, to name just the very first works using them.

Based on the definition of the $U^2$ space, it is easy to lift the bilinear $L^2$ estimate to functions in $U^2$. The space $V^2$ provides access to all non-endpoint Strichartz estimates and this implies a (weaker) bilinear $L^2$ estimate. Further optimizing the interpolation process provides the bilinear $L^2$ estimate with some log-type losses for functions in $V^2$; this is necessary as one cannot use (the better) $U^2$ as a resolution space. We remark that more general bilinear $L^p_t L^q_x$ estimates in $U^2$, in particular with $p,q<2$, were proved by Candy and Herr \cite{CaHe} and Candy \cite{Ca}. They also observed that in certain cases one can use $V^2$ for one of the functions without picking up the aforementioned loss.

\item \bf The spaces based on adapted frames. \rm  In studying the wave maps equation, Tataru circumvented the lack of a standard $L^2_t L^\infty_x$ end-point Strichartz estimate by recovering such estimates in adapted frames: fairly rigid frames (and many of them) are needed to capture estimates of type $L^2_{t_\Theta} L^\infty_{x_\Theta}$, and this is paired with energy estimates of type $L^\infty_{t_\Theta} L^2_{x_\Theta}$ to recover the bilinear $L^2$ estimate in \eqref{bilW}. This approach was effective in the study of wave maps \cite{Ta-2}, and similar constructions were used for Schr\"odinger maps in \cite{BIKT-1} and Klein-Gordon type problems, see \cite{BeHe}. 

The rather intricate construction of these resolution spaces is designed so as to recover the clean bilinear $L^2$ estimate. 
\end{itemize}

\subsection{Trilinear restriction theory}

While linear and bilinear estimates are extremely powerful and have been used to study many problems, they have limitations. For example, as described in the introduction, in dimension $n=3$, even bilinear estimates are not strong enough to provide a perturbative theory for \eqref{QNLS}. The major issue is that the nonlinearity has many resonances and there is no special structure, such as a null form, to ``defeat" them in the translation invariant setup. The main goal of this paper is to establish that deploying the trilinear restriction estimate achieves precisely this objective. To the best of our knowledge, there is no result in the current literature which implements a trilinear (or higher order) version of \eqref{bilS} or \eqref{bilW}, or more generally of \eqref{bilG}.

Our exposition below focuses on the explicit problem at hand, the quadratic NLS. However the key features that makes this problem difficult (large set of resonances and lack of a special structure) appear in other nonlinear problems and the method we describe below is likely to be useful in such problems. 

Assuming for now that we have a resolution space $X$ satisfying $X^\ast\subset Y$, then in order to prove \eqref{NXY}, the first natural question to ask is whether the corresponding trilinear estimate holds true for free waves, that is
\begin{equation} \label{trifw}
|\int e^{-it\Delta} f \cdot e^{-it\Delta} g \cdot \overline{e^{-it\Delta} h} dx dt| \les \|f\|_{L^2} \|g\|_{L^2} \|h\|_{L^2}.
\end{equation}
Directly as stated, this is a simpler problem because, just as in the case of the bilinear estimates, this can be restated in the Fourier variables as follows
\[
\int e^{-it\Delta} f \cdot e^{-it\Delta} g \cdot \overline{e^{-it\Delta} h} dx dt= (\tilde f \delta_{\tau-\xi^2=0} \ast \tilde g \delta_{\tau-\xi^2=0}
\ast \tilde h \delta_{\tau+\xi^2=0})(0);
\]
here $\tilde f$ is the ``lift" of $f$ to the characteristic surface and
$\| |\nabla \phi|^{\frac12} \tilde f\|_{L^2(\Sigma)}^2 \approx  \| f\|_{L^2}$, where $\Sigma$ is the paraboloid $\tau=\xi^2$ and $L^2(\Sigma)$ is constructed using the $\mathcal{H}^3$ measure; $\tilde g$ and $\tilde h$ are constructed in a similar fashion. For simplicity we assume that $f,g,h$ are supported at frequency $\les 1$, in which case $\|\tilde f\|_{L^2} \approx \|f\|_{L^2}$ and so on. Estimates of type
\begin{equation} \label{trint}
\|\int f_1 \delta_{\Sigma_1} \ast f_2 \delta_{\Sigma_2} \ast f_3 \delta_{\Sigma_3} \|_{L^\infty} \les \theta^{-\frac12} \|f_1\|_{L^2} \|f_2\|_{L^2} \|f_3\|_{L^2}, 
\end{equation}
have previously appeared in the literature, see for instance \cite{BeCawr} and \cite{BeHeTa}. Here $\theta$ quantifies the transversality between the supports of the three surfaces, that is
\[
\theta = \inf \{|N_{\zeta_1} \Sigma_1 \wedge N_{\zeta_2} \Sigma_2 \wedge N_{\zeta_3} \Sigma_3|, \zeta_i \in \Sigma_i \}, 
\]
where $N_\zeta \Sigma$ denotes the normal to $\Sigma$ at $\zeta \in \Sigma$. Notice the similarity between \eqref{bilGe} and \eqref{trint} when it comes to the dependence on the $\theta$ quantifying the degree of transversality between the hypersurfaces involved. 

However, in applications such as \eqref{QNLS}, one needs the stronger version of \eqref{trint}
\begin{equation} \label{triL1}
\|\mathcal{E}_1 f_1  \cdot \mathcal{E}_2 f_2 \cdot \mathcal{E}_3 f_3\|_{L^1(\R^4)} \les \theta^{-\frac12} \|f\|_{L^2} \|g\|_{L^2} \|h\|_{L^2}, 
\end{equation}
where we use the notation $\mathcal{E} f = \mathcal{F}^{-1}(f \delta_{\Sigma})$ and $\mathcal{F}$ is the (space-time) Fourier transform.
 
Generically, without assuming any curvature properties for the surfaces $\Sigma_i$, \eqref{triL1} is conjectured to be true in every (space-time) dimension $n \geq 3$ (this is the minimum dimension to have $\theta>0$). In \cite{BeCaTa}, Bennett, Carbery, and Tao proved estimates of this type for $\theta\approx 1$ but with extra subpolynomial losses which make the estimate unusable for the purpose of solving nonlinear PDEs. To get a sense of the difficulty of the sharp endpoint multilinear restriction problem, i.e. the estimate without the loss, one can consider the closely related multilinear Kakeya problem. In \cite{Gu-main}, Guth proved the endpoint $d$-linear Kakeya conjecture in $\R^d$ by using the polynomial method; see also the work of Zhang in \cite{zhang_endpoint_2018} for generalizations of this result.

It may be helpful to explain why \eqref{triL1} and its higher dimensional counterparts are significantly more difficult than the bilinear estimates such as \eqref{bilS}. By Plancherel, the bilinear $L^2$ estimate can be estimated on the Fourier side where the convolution of measures supported in transversal hypersurfaces can be effectively estimated through direct methods leading to \eqref{bilG}. There is no such mechanism for trilinear (or higher order) estimates.  

Based on these considerations, things might look hopeless even for free solutions, let alone the need to transfer the estimates to more general functions in a resolution space. What makes this possible is the observation that the trilinear $L^1$ estimate \eqref{triL1} is ``native" to $\R^3$, while in $\R^4$ (or higher dimensions) curvature can provide some extra room; more precisely, under appropriate transversality and curvature assumptions on the underlying surfaces, the optimal estimate in $\R^4$ is the following
\begin{equation} \label{triLC}
\|\mathcal{E}_1 f_1  \cdot \mathcal{E}_2 f_2 \cdot \mathcal{E}_3 f_3\|_{L^p(\R^4)} \les \|f\|_{L^2} \|g\|_{L^2} \|h\|_{L^2}, 
\end{equation}
for any $p\geq\frac{14}{15}$; note that this estimate is stated here for $\theta \approx 1$. The point is that the estimate is expected to hold true for some $p<1$, and, by interpolating with the trivial $L^\infty$ estimate, one can obtain the desired $L^1$ estimate for free waves without the aforementioned loss.

The estimate as stated in \eqref{triLC} was established by the first author in \cite{Be-mrec}, up to the endpoint, for surfaces satisfying a very general curvature condition. In particular, it allows for the surfaces to have vanishing principal curvatures in some directions, which plays an important role in our current work. However, even the very general formulation available in \cite{Be-mrec} does not provide us with all the necessary tools needed in our analysis and this is why that theory has been augmented in \cite{BeCh} by the first and third authors. 

The next step in the analysis is to transfer this trilinear restriction estimate from free solutions to an appropriate resolution space. A recent paper of the the first author \cite{Be-rs} provides an opening in this direction. There, in the context of the Schr\"odinger equation, a new technique was developed for establishing a transference mechanism of bilinear restriction estimates (available for free solutions) to functions in $V^2_\Delta$ (to be defined in section \ref{s:RS}) with low modulations; most importantly the transference is lossless, unlike what is normally expected. In this paper we deploy a similar argument for transferring trilinear restriction estimates to functions in $V^2_\Delta$ with low modulations. Theorem \ref{thm trilinear transference} is such an example for the Schr\"odinger equation, although for the particular PDE at hand, that is \eqref{QNLS}, we need the more complex version of it provided by Theorem \ref{trilml}. The low modulation interaction captures precisely the problematic resonances, whose structures deliver the necessary geometric structure for deploying the trilinear restriction estimate.

While in this outline we focus on the role of the trilinear restriction estimate, there are also other interactions in \eqref{NXYu2} which need to be controlled. However, armed with this low modulation estimate, we can assume that at least one term in these other interactions has relatively high modulation, which corresponds in some sense to nonresonant interactions. A careful analysis based on linear and bilinear estimates then provides the necessary estimates in these cases.

The grand conclusion of this analysis is that we are able to close the key estimate \eqref{NXYu2} with $X=V^2_\Delta$ (at the level of frequency localized functions). Recalling the standard fact that closing the estimate \eqref{NXYu2} in $V^2_\Delta$ is sufficient for recovering the solution of \eqref{LXY} in the same space $V^2_\Delta$, with the precise formulation being supplied by Theorem \ref{LDUD}, this allows us to close the iteration and prove Theorem \ref{thm gwp}.

\section{The resolution space} \label{s:RS}

In this section we introduce our resolution spaces. We begin with recalling the basic theory of $U^p,V^p$ spaces, followed by their adapted versions $U^p_\Delta,V^p_\Delta$ in the context of the Schr\"odinger flow along with their standard properties.

\subsection{$U^p$ and $V^p$ spaces} 

In this section we provide a brief introduction of the $U^p,V^p$ spaces; the material here can be found in \cite{KT,HHK,KT1,KT2,CaHe} and the interested reader is advised to consult these papers for any additional details; we would suggest \cite{HHK}, \cite{KTV} and \cite{KT2} as the more comprehensive ones.

Below we consider intervals $(a,b)$ with $-\infty \leq a < b \leq +\infty$. A partition $\tau$ of $(a,b)$ is a finite monotone sequence $a < t_1 < ...< t_N =b$.

We work with functions $u,v: (a,b) \rightarrow L^2(\R^n)$; as is standard in this context, throughout this paper we work with function $u,v$ that are ruled, in the sense that they have left and right limits everywhere. We also consider $1 \leq  p < \infty$ below. 

\begin{definition}  A $U^p(a,b)$ atom is a step function
\[
a(t)=\sum_{k=1}^K \mathds{1}_{[t_{k-1},t_{k})}(t) \phi_k , \quad \sum_{k=1}^K\|\phi_k\|_{L^2(\R^n)}^p=1,
\]
$U^p(a,b)$ is the atomic space based on these atoms, that is $u \in U^p(a,b)$ if
\[
u = \sum_{j \in \N} \lambda_j a_j,
\]
where $(\lambda_j) \in l^1$ and $a_j$ are atoms as above; $U^p(a,b)$ is equipped with the standard norm
\[
\|u\|_{U^p(a,b)} = \inf \{ \sum_{j \in \N}  |\lambda_j |:  u = \sum_{j \in \N} \lambda_j a_j \ \mbox{as above} \}. 
\]
\end{definition}

By definition, functions $u \in U^p$ are right continuous and $\lim_{t \rightarrow a+} u(t)=0$ in $L^2(\R^n)$. If we allow atoms to include the end-point $t_1=a$, we denote the new space by $\tilde U^p$; note that in this context we lose $\lim_{t \rightarrow a+} u(t)=0$. This is particularly useful when we involve the $U^p$ structure on subintervals of $(a,b)$. In this context we have the following result (see Lemma B.5 in \cite{KT2}):
\begin{equation} \label{U2int}
\| u_1 + u_2 \|^2_{\tilde U^2(a,b)}  \leq \| u_1 \|^2_{\tilde U^2(a,c)} + \| u_2 \|^2_{\tilde U^2(c,b)}, 
\end{equation}
where $c \in (a,b)$. 

\begin{definition} Given a function $v: (a,b) \rightarrow L^2(\R^n)$ we define
\[
|v|_{V^p} =\sup_{\tau=\{(t_j)_{j=1,N}\}} \left( \sum_j \| v(t_{j+1}) -v(t_j) \|_{L^2(\R^n)}^p \right)^\frac1p, 
\quad  \|v\|_{V^p} = |v|_{V^p} + \|v\|_{L^\infty_t L^2_x}. 
\]
We let $V^p=V^p(a,b)$ is the space of functions  $v: (a,b) \rightarrow L^2(\R^n)$ for which $ \|v\|_{V^p}$ is finite. 

\end{definition}

It is easy to show that if $v \in V^p$, then $v$ has lateral limits everywhere, including the end-points $a,b$ (this includes the case $a=-\infty$ and $b=+\infty$); for reference see \cite{HHK}. In order to avoid jumps in $V^2$ that are not related to the lateral limits, it is customary to work with $V^2_{rc}$, the subspace of $V^p$ of functions that are right-continuous, or  $V^2_{lc}$, the subspace of $V^p$ of functions that are left-continuous. In addition, a normalization at one of the end-points is usually present: in $V^2_{rc}$ one uses $\lim_{t \rightarrow a+} v(t)=0$ (in $L^2(\R^n)$), while in $V^2_{lc}$ one uses $\lim_{t \rightarrow b-} v(t)=0$. Such a normalization allows one to skip the use of $\|v\|_{L^\infty_t L^2_x}$ in the definition of $\|\cdot \|_{V^p}$ and simply use only $|v|_{V^p}$. 

In practice, in the context of solutions to nonlinear PDEs, we will have a priori knowledge of the continuity in time of solutions, while the normalization at $-\infty$ or $+\infty$ is unnecessary and incompatible with prescribing an initial data at a specific time $t_0$, which itself acts as a normalization. However one can reach any desired normalization by subtracting a constant in time function. 

Next we continue with basic properties of these spaces. We record the embeddings
\begin{equation}
U^p \subset V^p \subset U^q, \quad  1 \leq p < q < \infty;
\end{equation}
the first embedding is straightforward for atoms, hence for the whole $U^p$, while the second requires a bit more work, see for instance \cite{HHK}, Proposition 2.5 and Corollary 2.6 for a complete argument. 

As a consequence of Proposition 2.5 in \cite{HHK}, we have that given $v \in V^2$, $q>2$ and $N \in \N$, there exists a decomposition $v=u_N + v-u_N$ such that 
such that
\begin{equation} \label{V2U2q}
(\ln N)^{-1} \|u_N\|_{U^2} + 2^{N(1-\frac2q)} \|v-u_N\|_{U^q} \leq C(q) \|v\|_{V^2}. 
\end{equation}
This is formalized in the body of the proof of Proposition 2.20 in \cite{HHK}. Moreover, any frequency localization properties on the original $v$ is inherited by $u_N$ as well; to be more precise, if $P(D_x)$ is a frequency localization operator and $P(D_x) u(t) = u(t), \forall t$, then $P(D_x) u_N(t) = u_N(t), \forall t$. 
This property is obvious from the construction of $u_N$ in Proposition 2.5 in \cite{HHK}.

It is important to understand how the $U^p$ and $V^p$ structures interact with decompositions which have good orthogonality properties; we use the following result from Proposition 4.3 in \cite{CaHe}.
\begin{proposition} \label{Pref}
Let $M_1,M_2 \in (0,+\infty)$. For $k \in \N$, let $T_k : L^2(\R^n) \rightarrow L^2(\R^n)$ be a linear and bounded operator (acting spatially) such that
\[
M_1^{-1} \| \sum_{k \in \N}  T_k f \|_{L^2} \leq \left( \sum_{k \in \N}  \|T_k f \|^2_{L^2} \right)^\frac12 \leq M_2 \|f\|_{L^2}, \quad \forall f \in L^2.
\]
If $1 \leq p \leq 2$, then for all $u \in U^p$ we have the bound
\begin{equation} \label{U2dec}
\left( \sum_{k \in \N}  \|T_k u \|^2_{U^p} \right)^\frac12 \leq M_2 \|u\|_{U^p}.
\end{equation}
If $ p \geq 2$, then for all $v \in V^p$ we have the bound
\begin{equation} \label{V2dec}
 \| \sum_{k \in \N}  T_k v \|_{V^p}  \leq M_1 \left( \sum_{k \in \N}  \|T_k v \|^2_{V^p} \right)^\frac12. 
\end{equation}
\end{proposition}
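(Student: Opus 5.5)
The plan is to prove both bounds by reducing them to the pointwise-in-time hypothesis, with Minkowski's inequality in mixed $\ell^{p}\ell^{2}$ norms doing the rest. In the $U^p$ case ($p\le 2$, so $2/p\ge 1$) the reduction goes through atoms. In the $V^p$ case ($p\ge 2$, so $p/2\ge 1$) it goes through an arbitrary partition.

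\emph{The $U^p$ bound.} First let $a=\sum_{j}\mathds{1}_{[t_{j-1},t_j)}\phi_j$ be a $U^p$ atom. Since $T_k$ acts only in space, $T_ka=\sum_j \mathds{1}_{[t_{j-1},t_j)}T_k\phi_j$ is a scalar multiple of an atom, so
\[
\|T_k a\|_{U^p}\le \Big(\sum_j\|T_k\phi_j\|_{L^2}^p\Big)^{\frac1p}.
\]
Squaring and summing in $k$, Minkowski's inequality in $\ell^{2/p}$ (valid since $2/p\ge1$), applied to the sequences $(\|T_k\phi_j\|^p)_j$, gives
\[
\sum_k\Big(\sum_j\|T_k\phi_j\|^p\Big)^{\frac2p}\le\Big(\sum_j\Big(\sum_k\|T_k\phi_j\|^2\Big)^{\frac p2}\Big)^{\frac2p}\le M_2^2\Big(\sum_j\|\phi_j\|^p\Big)^{\frac2p}=M_2^2 .
\]
The last step uses the right-hand hypothesis for each fixed $\phi_j$. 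For a general $u=\sum_i\lambda_i a_i$ we have $T_ku=\sum_i\lambda_iT_ka_i$, with the series converging in $U^p$ since $\|T_ka_i\|_{U^p}\le M_2$. The triangle inequality in $\ell^2_k(U^p)$ then yields $\big(\sum_k\|T_ku\|_{U^p}^2\big)^{1/2}\le M_2\sum_i|\lambda_i|$. Taking the infimum over representations of $u$ gives \eqref{U2dec}.

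\emph{The $V^p$ bound.} Set $v_k=T_kv$, assume the right-hand side of \eqref{V2dec} is finite, and interpret $\sum_k T_kv(t)$ pointwise in $t$ as an $L^2$ limit. The left-hand hypothesis then makes sense for each fixed time. Fix a partition $(t_j)$ and write $a_{jk}=\|v_k(t_{j+1})-v_k(t_j)\|_{L^2}$. Applying the left-hand hypothesis to $f=v(t_{j+1})-v(t_j)$ gives $\|\sum_kT_kf\|_{L^2}\le M_1(\sum_ka_{jk}^2)^{1/2}$. Minkowski's inequality in $\ell^{p/2}$ (now $p/2\ge1$) gives
\[
\Big(\sum_j\Big(\sum_k a_{jk}^2\Big)^{\frac p2}\Big)^{\frac1p}\le\Big(\sum_k\Big(\sum_j a_{jk}^p\Big)^{\frac2p}\Big)^{\frac12}\le\Big(\sum_k|v_k|_{V^p}^2\Big)^{\frac12}.
\]
Taking the supremum over partitions bounds the variation seminorm of $\sum_kT_kv$ by $M_1(\sum_k|v_k|_{V^p}^2)^{1/2}$. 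Similarly, for each fixed $t$,
\[
\Big\|\sum_kT_kv(t)\Big\|_{L^2}\le M_1\Big(\sum_k\|v_k\|_{L^\infty_tL^2}^2\Big)^{\frac12}.
\]

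The main (minor) obstacle is recombining the two pieces of the full norm $\|\cdot\|_{V^p}=|\cdot|_{V^p}+\|\cdot\|_{L^\infty L^2}$. Using $\|\alpha\|_{\ell^2}+\|\beta\|_{\ell^2}\le\sqrt2\,\|\alpha+\beta\|_{\ell^2}$ for nonnegative sequences, the argument above gives \eqref{V2dec} with constant $\sqrt2M_1$. To get exactly $M_1$, I would work in the normalized setting ($V^p_{rc}$ with $v(a+)=0$), where only $|\cdot|_{V^p}$ is used as the norm. Including the endpoint $a$, where all $v_k$ vanish, in the partitions handles the $L^\infty$ part automatically. I would also remark that a constant loss of this kind is harmless for all applications in the paper.
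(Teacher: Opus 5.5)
Your proof is correct and is the standard argument for the result that the paper quotes without proof from \cite[Proposition 4.3]{CaHe}: atoms plus Minkowski in $\ell^{2/p}$ for $U^p$, and arbitrary partitions plus Minkowski in $\ell^{p/2}$ for $V^p$. Your caveat about the constant is genuinely needed and is not an artifact of your method, because with the paper's norm $|v|_{V^p}+\|v\|_{L^\infty_tL^2_x}$ the bound \eqref{V2dec} with constant exactly $M_1$ fails: take complementary orthogonal projections $T_1,T_2$ (e.g.\ sharp Fourier projections, so $M_1=1$), unit vectors $\phi_i\in\operatorname{Ran}T_i$, and $v=\epsilon s(t)\phi_1+\phi_2$ with $s$ a $\pm1$ step function having $N$ jumps and $2\epsilon N^{1/p}=1$; then $\|v\|_{V^p}=1+\sqrt{1+\epsilon^2}>2$, while $(\|T_1v\|_{V^p}^2+\|T_2v\|_{V^p}^2)^{1/2}=((1+\epsilon)^2+1)^{1/2}\to\sqrt2$, so the sharp constant holds only for the normalized pure-variation norm, as you indicate.
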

In particular the estimate \eqref{V2dec} for $V^2$ is not the one we need; what we need is the estimate \eqref{U2dec} for $V^2$, but this is not known to be true. This is the reason we provide a weaker substitute in the following Lemma.  

\begin{lemma} \label{l2V2}
Assume that $v \in V^2$ is localized at frequency $2^k$ and we decompose
\[
v=\sum_{q \in \mathcal{C}_j} \chi_q(D_x) v
\]
for some $j \leq k$. If $q > 2$, then the following holds true
\begin{equation}
\sum_{q \in \mathcal{C}_k} \|\chi_q(D_x) v\|^2_{U^q} \les \la k-j \ra^2 \|v\|_{V^2}^2. 
\end{equation}

\end{lemma}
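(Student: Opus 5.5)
We plan to combine the decomposition \eqref{V2U2q} with the orthogonality estimate \eqref{U2dec} of Proposition \ref{Pref}, and to use a crude counting bound for the piece carried in $U^q$. (The statement should be read with the sum over $q\in\mathcal C_j$, the cubes used in the decomposition; also $q$ denotes both the cubes and the exponent, which we keep distinct in what follows.) Fix $N\in\N$, to be chosen later, and apply \eqref{V2U2q} to $v$. This gives $v=u_N+(v-u_N)$ with
\[
\|u_N\|_{U^2}\les \ln N\,\|v\|_{V^2},\qquad \|v-u_N\|_{U^q}\les 2^{-N(1-\frac2q)}\|v\|_{V^2}.
\]
By the remark following \eqref{V2U2q}, both pieces remain localized at frequency $2^k$, that is $\tilde P_k u_N=u_N$. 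Since $\|\cdot\|_{U^q}\les\|\cdot\|_{U^2}$ for $q>2$, the triangle inequality reduces the lemma to bounding
\[
\sum_{q}\|\chi_q(D_x)u_N\|_{U^2}^2 \quad\text{and}\quad \sum_q\|\chi_q(D_x)(v-u_N)\|_{U^q}^2 .
\]

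For the first sum we apply \eqref{U2dec} with $p=2$ and $T_q=\chi_q(D_x)$. These operators have disjoint Fourier supports, so the hypothesis of Proposition \ref{Pref} holds with $M_1=M_2=1$. Hence
\[
\sum_q\|\chi_q(D_x)u_N\|_{U^2}^2\le\|u_N\|_{U^2}^2\les(\ln N)^2\|v\|_{V^2}^2 .
\]

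For the second sum we have no orthogonality in $U^q$ with $q>2$, so we count. Since $v-u_N$ is supported in the annulus $|\xi|\approx 2^k$, only about $2^{3(k-j)}$ cubes $q\in\mathcal C_j$ contribute. For each such cube, $\chi_q(D_x)$ is a contraction on $L^2$; applied to atoms, it is therefore bounded on $U^q$ with norm at most $1$. This gives
\[
\sum_q\|\chi_q(D_x)(v-u_N)\|_{U^q}^2\les 2^{3(k-j)}\,2^{-2N(1-\frac2q)}\|v\|_{V^2}^2 .
\]

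Finally we choose $N=C(q)\la k-j\ra$ with $C(q)$ large enough that $2N(1-\frac2q)\ge 3(k-j)$. The second sum is then $\les\|v\|_{V^2}^2$, and the first is $\les(\ln\la k-j\ra)^2\|v\|_{V^2}^2\les\la k-j\ra^2\|v\|_{V^2}^2$, which proves the claim. The only delicate point is the correct use of \eqref{V2U2q}: the decomposition must respect the frequency localization, so that the number of contributing cubes stays polynomial in $2^{k-j}$, and the constant in \eqref{V2U2q} is allowed to depend on $q$. The counting bound is wasteful, but the exponential gain $2^{-N(1-\frac2q)}$ absorbs it.
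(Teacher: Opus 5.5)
Your proposal is correct and is essentially the paper's proof: you split $v=u_N+(v-u_N)$ via \eqref{V2U2q}, apply \eqref{U2dec} (with $M_1=M_2=1$) to $u_N$, bound the $\approx 2^{3(k-j)}$ contributing cubes for $v-u_N$ crudely in $U^q$, and take $N(1-\frac2q)\approx k-j$ so that the exponential gain absorbs the count. One caveat, which does not affect your conclusion: the construction in \cite{HHK} behind \eqref{V2U2q} costs a factor linear in $N$ rather than $\ln N$ (the $U^2$ cost is logarithmic in the $U^q$ gain), so your intermediate $(\ln\la k-j\ra)^2$ should really be $\la k-j\ra^2$ --- exactly the stated bound, which is presumably why the lemma is stated that way.
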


\begin{proof}
We set $N \in \N$ such that $2N(1-\frac2q) \approx 3(k-j)$. 
We use the decomposition $v=u_N+v-u_N$ from \eqref{V2U2q}. From \eqref{U2dec}
we have that
\begin{equation} \label{aux1a}
\sum_{q \in \mathcal{C}_k} \|\chi_q(D_x) u_{N}\|^2_{U^2} \les \|u_N\|_{U^2}^2.  
\end{equation}
On the other hand, for each $q$ it follows that
\[
\|\chi_q(D_x) (v-u_N)\|_{U^q} \les \|v-u_N\|_{U^q} \les \|v-u_N\|_{V^2}. 
\]
From this we obtain
\begin{equation} \label{aux1b}
\sum_{q \in \mathcal{C}_j} \|\chi_q(D_x) (v-u_N)\|^2_{U^q} \les 2^{3(k-j)} 
 \|v-u_N\|_{U^q}^2 \les 2^{2N(1-\frac2q)} \|v-u_N\|_{U^q} \les \|v\|_{V^2}^2,
\end{equation}
given that the cardinality of cubes $q \in \mathcal{C}_j$ for which $\chi_q(D_x) (v-u_N)\neq 0$ is $\approx 2^{3(k-j)}$; here we implicitly use that $v$ and $u_N$ are localized at frequency $2^k$.  

The result of the Lemma then follows from \eqref{aux1a}, \eqref{aux1b}, and the inclusion $U^2 \subset U^q$. 

\end{proof}

The (distributional) time derivative of functions in $U^p,V^p$ plays an important role in the theory. We define
\begin{equation} \label{DU}
DU^p =\{ \partial_t u: u \in U^p \}, \quad DV^p =\{ \partial_t v:  v \in V^p \},
\end{equation}
where $\partial_t$ is meant to be a distributional derivative. An important ingredient is the following formal duality (which is not to be read verbatim)
\[
(DU^p)'=V^q, \quad (DV^p)'=U^q, \quad 1 < p,q< \infty, \frac1p+\frac1q=1.  
\]
By now this duality is completely understood, particularly in light of the work of Koch and Tataru in \cite{KT2}. We use the following formalization from \cite{KT2}, Proposition B.17; see also \cite{CaHe}, Theorem 4.4 and 4.6. 

\begin{theorem} \label{DualUV} If $1 < p,q< \infty, \frac1p+\frac1q=1$, the spaces $DU^p$ and $DV^p$ can be characterized as the space of distributions for which the following norms are finite:
\begin{equation} \label{TDU}
\|f\|_{DU^p}=\sup_{ \phi \in C_0^\infty: \| \phi\|_{V^q} \leq 1}  \left| \int \la  f , \phi \ra_{L^2} dt \right|,
\end{equation}
\begin{equation} \label{TDV}
\|f\|_{DV^p}=\sup_{ \phi \in C_0^\infty: \| \phi\|_{U^q} \leq 1}  \left| \int \la  f , \phi \ra_{L^2} dt \right|,
\end{equation}
\end{theorem}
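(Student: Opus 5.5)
The plan is to prove both characterizations by one scheme. Write the Stieltjes-type pairing $B(u,\phi)=\int \la \partial_t u,\phi\ra_{L^2}\,dt=-\int \la u,\partial_t\phi\ra_{L^2}\,dt$, defined first for smooth $\phi$. Then prove two inequalities: an easy upper bound valid for atoms, and a lower bound obtained by duality. Throughout, $\|f\|_{DU^p}$ means $\inf\{\|u\|_{U^p}:\partial_t u=f\}$, and similarly for $DV^p$. Since $U^p$ functions vanish at $a+$, the only ambiguity in $u$ is an additive constant, and that constant is excluded by this normalization.

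\emph{Upper bounds.} Take a $U^p$ atom $a=\sum_k \mathds{1}_{[t_{k-1},t_k)}\phi_k$. Then $\partial_t a$ is a sum of Dirac masses $\delta_{t_k}(\phi_{k+1}-\phi_k)$, so for $\phi\in C_0^\infty$ summation by parts gives
\[
\int\la \partial_t a,\phi\ra\,dt=-\sum_k \la \phi_k,\phi(t_k)-\phi(t_{k-1})\ra .
\]
By H\"older in $\ell^p/\ell^q$ this is at most $\|\phi\|_{V^q}$. Summing over the atomic decomposition shows that the right side of \eqref{TDU} is $\le \|f\|_{DU^p}$. For \eqref{TDV}, take $f=\partial_t v$ with $v\in V^p$ and a test function $\phi$ that is a $U^q$ atom, approximated by smooth functions. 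The same summation by parts gives $\sum_k \la v(t_k)-v(t_{k-1}),\phi_k\ra$, which is bounded by $|v|_{V^p}$. One has to check that smoothing the atom does not increase the bound: mollification is a convex average of translates, so it does not increase $U^q$ norms.

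\emph{Lower bound for \eqref{TDV}.} This is the simpler of the two lower bounds, because the $V^p$ norm is itself a supremum. Fix a partition $(t_j)$ and choose $\psi_j\in L^2$ with $\sum\|\psi_j\|^q_{L^2}=1$ realizing the $\ell^p$ norm of the increments $v(t_{j+1})-v(t_j)$. The step function $\sum_j \mathds{1}_{[t_j,t_{j+1})}\psi_j$ is a $U^q$ atom, and pairing it with $\partial_t v$ recovers $\big(\sum_j\|v(t_{j+1})-v(t_j)\|_{L^2}^p\big)^{1/p}$. Replace the atom by a mollified version, with mollification scale smaller than the gaps of the partition. Using right-continuity and the existence of lateral limits of $v$, this changes the pairing by an arbitrarily small amount. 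Hence $|v|_{V^p}$ is at most the supremum in \eqref{TDV}. The normalization at the endpoint then controls $\|v\|_{L^\infty_tL^2_x}$, which closes the lower bound.

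\emph{Lower bound for \eqref{TDU}; this is the main obstacle.} Here we must produce $u\in U^p$ from a distribution $f$ that is only known to satisfy $|\int\la f,\phi\ra|\le M\|\phi\|_{V^q}$ for smooth $\phi$. The plan is to let $W$ be the closure of $C_0^\infty$ in $V^q$ and to show that $U^p$ is isometrically the dual of $W$ via $B$. Then $f$, extended by continuity, is a bounded functional on $W$, and the dual statement supplies $u\in U^p$ with $\|u\|_{U^p}\le M$ and $B(u,\phi)=\int\la f,\phi\ra$ for all test $\phi$. This says exactly $\partial_t u=f$ in the sense of distributions. The identification of $U^p$ with $W'$ has two parts:
\begin{enumerate}[(i)]
\item the norming property $\|u\|_{U^p}=\sup\{B(u,\phi):\phi\in C_0^\infty,\ \|\phi\|_{V^q}\le1\}$;
\item surjectivity, meaning every functional on $W$ comes from some $u\in U^p$.
\end{enumerate}
For (i), I would use the known duality $(U^p)^*=V^q$ (\cite{HHK}, Theorem 2.8) and then approximate the norming $V^q$ function by mollifications. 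Mollification does not increase the $V^q$ norm and converges at all continuity points, and $B(u,\cdot)$ passes to this limit because $u$ is a uniformly convergent sum of atoms, each having finitely many jumps. For (ii), I would discretize: restrict the functional to $V^q$ functions that are piecewise constant on a fixed partition, mollified at its nodes. On that subspace the functional is represented by a $p$-summable sequence of jumps, that is, by a $U^p$ atom times $M$. Refining the partitions, I would extract a limit using weak-$*$ compactness of the sequence of jumps together with the uniform bound $M$. I expect the delicate point to be making this limit land in the atomic space $U^p$ and not in some larger space; if this becomes technical I would fall back on the argument of \cite{KT2}, Proposition B.17.
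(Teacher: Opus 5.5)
Your upper bounds for \eqref{TDU} are fine. So is the reduction of the hard direction to $U^p\cong W^*$, with $W=\overline{C_0^\infty}^{V^q}$. (The paper does not prove this theorem; it quotes \cite{KT2}, Proposition B.17, and \cite{CaHe}.)

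\textbf{The main gap: (ii) is the theorem itself.} The reduction is only a restatement. For any $\ell\in W^*$, $\ell|_{C_0^\infty}$ is a distribution whose supremum in \eqref{TDU} is at most $\|\ell\|$. So your surjectivity claim (ii) is equivalent to the hard direction of \eqref{TDU}, and your sketch of (ii) does not supply the missing idea.
\begin{itemize}
\item \emph{A smaller inaccuracy.} On mollified step functions over a fixed partition $\pi$, the $V^q$ norm is the discrete $q$-variation, with the supremum over all coarsenings of $\pi$. So the restricted functional is represented by a convex combination, of total mass at most $M$, of atoms adapted to coarsenings of $\pi$. It is not ``a $U^p$ atom times $M$'' with $p$-summable jumps. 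This is harmless in itself.
\item \emph{The real problem is the limit.} A family bounded in $U^p$ is, a priori, weak-$*$ precompact only in $V^p=(U^q)^*$, or pointwise via a Helly-type selection. Any such limit $u$ automatically satisfies $|\int \la u,\partial_t\phi\ra\,dt|\le M\|\phi\|_{V^q}$ for all $\phi\in C_0^\infty$. Proving $u\in U^p$ with $\|u\|_{U^p}\le M$ is therefore exactly the theorem again, applied to $f=\partial_t u$.
\end{itemize}
What you actually need is weak-$*$ closedness (a Fatou property) of the unit ball of the atomic space $U^p\subsetneq V^p$. The discretization gives no handle on this: limits of convex combinations of atoms with unboundedly many steps need not be norm limits of such combinations. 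That closedness is the real content of the cited result, so falling back on \cite{KT2} leaves the main direction unproved.

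\textbf{The existence half of \eqref{TDV} is missing.} Your lower bound starts from $f=\partial_t v$ with $v\in V^p$ already given. It shows that the norm of an element of $DV^p$ equals the supremum. It does not show that a distribution with finite supremum lies in $DV^p$.
\begin{itemize}
\item \emph{Why testing on atoms cannot do this.} $\|\cdot\|_{U^q}$ dominates $\|\cdot\|_{L^\infty_tL^2_x}$, so the $U^q$-closure of $C_0^\infty$ consists of continuous functions. Hence $\phi\mapsto\int\la f,\phi\ra\,dt$ does not reach step atoms by continuity.
\item \emph{The fix.} Extend this functional by Hahn--Banach to all of $U^q$ with the same norm, and obtain $v\in V^p$ from $(U^q)^*=V^p$. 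Check $\partial_t v=f$ on smooth $\phi$, which are $U^q$-limits of step functions since $q>1$. Then pass to the right-continuous modification and normalize.
\end{itemize}

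\textbf{Two smaller repairs.}
\begin{itemize}
\item \emph{The \eqref{TDV} upper bound.} Bounding the pairing on smoothed atoms does not cover a general smooth $\phi$ with $\|\phi\|_{U^q}\le1$, which is an infinite combination of atoms. Instead, decompose $\phi$ (cut to a large interval) into atoms and mollify $v$, using $|v*\rho_\epsilon|_{V^p}\le|v|_{V^p}$.
\item \emph{The norming property (i).} Mollifying the norming $v$ converges only at continuity points of $v$, and these may miss exactly the jump points of $u$. First approximate $u$ by a finite step function. Then replace $v$ by a smooth reparametrization of the piecewise linear interpolation of its values at those finitely many points. This leaves $B(u,\cdot)$ unchanged and does not increase the $q$-variation.
\end{itemize}
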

 
\begin{r1}
In practice, the distributions $f$ that we test will have additional properties such as $f \in L^1_{t, loc} L^2_x$, and, as a consequence, the associated $u$ with the property $f = \partial_t u$ comes with a priori continuity properties $u \in C_t L^2_x$. In order to uniquely identify $u$, either in $U^p$ or $V^p$, one has to impose a normalization, such as the standard one in $U^p$, $\lim_{t \rightarrow a+} u(t)=0$, or the standard one in $V^p$, $\lim_{t \rightarrow b-} u(t)=0$. 
\end{r1}

For $s \in \R$ and $1 \leq p,q \leq +\infty$, we let 
\[
\|f\|_{\dot B^s_{p.q}}=\big\|\big(2^{sj}\|P_{j}(\partial_t)
f\|_{L^p(\R: L^2(\R^n))}\big)_{j \in \Z }\big\|_{\ell^q}. 
\]
We are interested in the particular case $p=2$ and record the following embeddings
\begin{equation} \label{BUV}
\dot B^{\frac12}_{2,1} \subset U^2 \subset V^2 \subset \dot B^{\frac12}_{2,\infty}.
\end{equation}
These embeddings are well known: see for instance $(2.5)$ in \cite{KT1} in the context of the Schr\"odinger flow (which is what we need in this paper) or \cite[Proposition 2.14]{HHK} for a more general version (more general $p$). 

We note that the (time) frequency cut-offs are bounded on the $U^p,V^p$ spaces, that is $P_{\geq k}(D_t), P_{< k}(D_t)$ are bounded operators on $U^p,V^p$; this is based on the fact that the kernel of $P_{< k}(D_t)$ is an $L^1_t$ function. For complete arguments see for instance \cite[Proposition 2.18]{HHK} or \cite[Lemma 4.17]{KTV}.

\subsection{$U^p,V^p$ adapted to the Schr\"odinger flow.}

Consider the  linear Schr\"odinger equation 
\begin{equation}
(i \partial_t - \Delta) \psi = 0, \quad  \psi(0,x)= \psi_0(x),
\end{equation}
where $\psi: \R \times \R^n \rightarrow \C$. The solution $\psi(t)=e^{-it\Delta} \psi_0$ satisfies $e^{it \Delta} \psi(t)=\psi_0 = \mathds{1}_{\R}(t) \psi_0(x)$, thus it is a constant in time function, just like an atom in $U^p$ (to be more precise we would have to consider the solution on some interval $[T,+\infty)$ for some $T\in \R$ to obtain an atom with the standard normalization). Such a nice representation is lost when we seek solutions for a nonlinear PDE, but instead a more realistic expectation is that $e^{it \Delta} \psi(t)$ belongs to $U^2$ or $V^2$. This motivates the definition
\[
U^p_{\Delta}= \{ u: e^{it \Delta} u \in U^p \}, \quad V^p_{\Delta}= \{ u: e^{it \Delta} u \in V^p \}.
\]
Let us highlight some of the key properties of these spaces. We note the straightforward property $e^{-it \Delta} (\chi_I \phi) = \chi_I  e^{-it \Delta}  \phi$, thus the atoms in $U^2_\Delta$ are
\[
a(t)=\sum_{k=1}^K \mathds{1}_{[t_{k-1},t_{k})}(t) e^{-it \Delta}  \phi_k , \quad \sum_{k=1}^K\|\phi_k\|_{L^2(\R^n)}^p=1.
\]
We recall the standard Strichartz estimate for the linear equation
\[
\|e^{-it\Delta} f \|_{L^q_t L^r_x} \les \|f\|_{L^2},
\]
for any pair $(q,r)$ satisfying $2 < q \leq \infty$ and $\frac{2}q + \frac{3}r=\frac{3}2$; we do not need the end-point in this paper.  Using the embedding $V^2_\Delta \subset U^q_\Delta$, and the fact that the Strichartz estimate is straightforward for atoms in $U^q_\Delta$, we obtain the following standard result
\begin{equation} \label{V2Str}
\| u \|_{L^q_t L^r_x} \les \|u\|_{V^2_\Delta}. 
\end{equation}
The following bilinear $L^2$ estimate is also standard
\begin{equation} \label{V2bil}
\|u_{k_1} v_{k_2}\|_{L^2_{t,x}} \les \la k_2-k_1 \ra^2 2^{\frac{2k_1-k_2}2} \|u_{k_1}\|_{V^2_\Delta} \|v_{k_1}\|_{V^2_\Delta},
\end{equation}
where $u_{k_1}, v_{k_2}$ are assumed to be localized at frequency $2^{k_1}$ respectively $2^{k_2}$, and $k_1 \leq k_2$. This is obtained by (a straightforward)
transferring of the estimate for free waves to $U^2_\Delta$, that is
\[
\|u_{k_1} v_{k_2}\|_{L^2_{t,x}} \les 2^{\frac{2k_1-k_2}2} \|u_{k_1}\|_{U^2_\Delta} \|v_{k_1}\|_{U^2_\Delta},
\]
by using the weaker estimate (obtained via the Strichartz estimates)
\[
\|u_{k_1} v_{k_2}\|_{L^2_{t,x}} \les \|u_{k_1}\|_{L^4_t L^6_x} \|v_{k_2}\|_{L^4_t L^3_x} \les 2^{\frac{k_1}2} \|u_{k_1}\|_{L^4_t L^3_x} \|v_{k_2}\|_{L^4_t L^3_x} \les 2^{\frac{k_1}2} \|u_{k_1}\|_{U^4_\Delta} \|v_{k_1}\|_{U^4_\Delta}, 
\]
and optimizing an interpolation based on the decomposition in \eqref{V2U2q}; details can be found in \cite[Corollary 2.21]{HHK} - while the estimate here is for a different equation and dimensions, the process is otherwise identical. 

Let us recall the definition of the standard $X^{0,b,q}$ spaces in this context. We recall from Section \ref{subsect:not} the definition of the modulation localization operator. For $j \in \Z$, $
\mathcal{F}[Q_{j}f](\tau,\xi)=\rho_j(\tau -  \xi^2)\mathcal{F}f(\tau,\xi)$, $\rho_j$ is the function localizing $|\tau - \xi^2| \approx 2^j$.

For $1\leq p\leq \infty$, $b \in \R$, we define the following semi-norm
\[
\|f\|_{\dot{X}^{0,b,p}}=\big\|\big(2^{bj}\|Q_{j}
f\|_{L^2}\big)_{j \in \Z }\big\|_{\ell^p}, 
\]
with the standard modification when $p=\infty$. We note that $b$ quantifies the time regularity after "pulling back" the linear flow, that is the time regularity of the object $e^{it \Delta} u$. Indeed
\[
\|f\|_{\dot{X}^{0,b,p}} \approx \|  e^{it \Delta} f \|_{(\dot B^{b}_{2,p})_t L^2_x},
\]
where $(B^{b}_{2,p})_t L^2_x$ stands for the space of functions $ g: \R \times \R^n$ with $\|g(t, \cdot)\|_{L^2_x(\R^n)} \in \dot B^{b}_{2,p}$ (as a function of $t$) and with the associated semi-norm. 

The following embeddings are well known (see for instance $(2.5)$ in \cite{KT1}):
\begin{equation} \label{UVX}
X^{0,\frac12,1} \subset U^2_\Delta \subset V^2_\Delta \subset X^{0,\frac12,\infty};
\end{equation}
these are a direct consequence of \eqref{BUV} and the definitions of these spaces. 
In particular, at a single modulation scale $ j$, the space $ V_{ \Delta }^{ 2 }
$ inherits the high-modulation structure defined in \cite[Lemma 2.6
(ii)]{Be-rs}:
\begin{align}\label{eq:x-v2}
  \left( 
    \sum_{ T\in \mathcal T_{ -k }  } \lVert Q_{ j } u \rVert_{ L^{ \infty }_{ t
    } L_{ x }^{ 2 } (T) } ^2  
  \right)^{ \frac{1}{2}  }
  \lesssim \lVert Q_{ j } u \rVert _{ U_{ \Delta }^{ 2 } } 
  \sim \lVert Q_{ j } u \rVert _{ V_{ \Delta }^{ 2 } },
\end{align} 
where the implicit constant is uniform for $j\succeq 2k$.

In the previous section we noted that $P_{\geq k}(D_t), P_{< k}(D_t)$ are bounded operators on $U^p,V^p$; as a consequence $Q_{\preceq}, Q_{\succeq}$ are bounded operators on  $V_\Delta^2,U_\Delta^2$.

Moreover, we also have
\begin{equation} \label{V2L2}
\| Q_{\succeq k}(D_t) f \|_{L^2(\R \times \R^{n})} \les 2^{-\frac{k}2} \| f \|_{V^2_\Delta},
\end{equation}
which follows from \eqref{UVX} (the inclusion into $\dot X^{\frac12}_{2,\infty}$ part). 

In what follows we rely on the following simple identity
\[
e^{it\Delta} Q_m ( e^{-it \Delta} f )=   P_m (D_t) f;
\]
a similar identity holds true if $Q_m$ is replaced by $Q_{ \preceq m}$ or $Q_{\succeq m}$.

Similarly to \eqref{DU},  we define the space
\begin{equation} \label{DUD}
DU^p_\Delta = \{ (i \partial_t - \Delta) u: u \in U^p_\Delta \}, \quad DV^p_\Delta = \{ (i \partial_t - \Delta) u: u \in V^p_\Delta \}
\end{equation}
with the induced norm; here $\partial_t, \Delta $ are distributional derivatives. We note that since $u = e^{-it\Delta} \tilde u, \tilde u \in U^2$, then $(i \partial_t - \Delta) u = e^{-it\Delta} i \partial_t \tilde u$ and this justifies the use of notation. Next we adapt the statements in Theorem \ref{DualUV}  to the new context.

\begin{theorem} \label{LDUD} If $1 < p,q< \infty, \frac1p+\frac1q=1$ the spaces $DU^p_\Delta$ and $DV^p_\Delta$ can be characterized as the space of distributions for which the following norms are finite:
\begin{equation} 
\|f\|_{DU^p_\Delta}=\sup_{ \phi \in C_0^\infty: \| \phi\|_{V^q_\Delta} \leq 1}  \left| \int \la  f , \phi \ra_{L^2} dt \right|,
\quad  
\|f\|_{DV^p}=\sup_{ \phi \in C_0^\infty: \| \phi\|_{U^q_\Delta} \leq 1}  \left| \int \la  f , \phi \ra_{L^2} dt \right|,
\end{equation}
\end{theorem}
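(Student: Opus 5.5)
The plan is to reduce Theorem \ref{LDUD} to the unadapted duality of Theorem \ref{DualUV} by conjugating with the free flow. The map $\mathcal{T}: u \mapsto e^{it\Delta} u$ is, by definition, an isometric isomorphism from $U^p_\Delta$ onto $U^p$ and from $V^p_\Delta$ onto $V^p$. It intertwines the operators by
\[
(i\partial_t - \Delta) u = e^{-it\Delta} i\partial_t (e^{it\Delta} u).
\]
Hence $f \in DU^p_\Delta$ if and only if $g := -i e^{it\Delta} f \in DU^p$, with $\|f\|_{DU^p_\Delta} = \|g\|_{DU^p}$ under the induced norms. The same equivalence holds for $DV^p_\Delta$ and $DV^p$. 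These identities are interpreted distributionally: $e^{\pm it\Delta}$ acts on tempered distributions on $\R\times\R^n$ as a Fourier multiplier in $x$ depending smoothly on $t$, so the equivalence is justified on the Fourier side.

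Next I would transfer the pairing. Since $e^{it\Delta}$ is unitary on $L^2(\R^n)$ for each fixed $t$,
\[
\int \la f, \phi\ra_{L^2}\,dt = \int \la e^{it\Delta} f, e^{it\Delta}\phi\ra_{L^2}\,dt = i\int \la g, \psi\ra_{L^2}\,dt, \qquad \psi := e^{it\Delta}\phi,
\]
and $\|\psi\|_{V^q} = \|\phi\|_{V^q_\Delta}$ (respectively $\|\psi\|_{U^q} = \|\phi\|_{U^q_\Delta}$). Applying Theorem \ref{DualUV} to $g$ then gives the adapted characterization, except that the supremum in the adapted statement ranges over test functions $\phi \in C_0^\infty$. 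These correspond to $\psi = e^{it\Delta}\phi$, which are smooth in $t$ with values in $L^2$ but are not compactly supported in $x$. The remaining task is therefore to check that the two suprema coincide.

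This density issue is the main obstacle. Suppose $\phi \in C_0^\infty(\R\times\R^n)$. Then $\psi = e^{it\Delta}\phi$ is Schwartz in $x$, compactly supported in $t$, and smooth as an $L^2$-valued function, so it lies in the admissible class of Theorem \ref{DualUV}. In the cited form of that theorem, from Proposition B.17 of \cite{KT2}, the test class is compactly supported smooth $L^2$-valued functions of $t$; I would verify that this is the class actually intended. This gives the inequality $\sup_\phi \le \|g\|_{DU^p}$.

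For the reverse inequality, I would approximate a test function $\psi$ of the unadapted class in the following steps.

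1. Truncate in frequency with $P_{\le N}(D_x)$.
2. Mollify in time.
3. Cut off spatially by $\chi(x/R)$ applied to $e^{-it\Delta}\psi$.

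This produces $\phi_{N,R} \in C_0^\infty$ with $\|\phi_{N,R}\|_{V^q_\Delta} \le (1+o(1))\|\psi\|_{V^q}$ and $\int \la f, \phi_{N,R}\ra\,dt \to \int \la f, e^{-it\Delta}\psi\ra\,dt$. Frequency cutoffs and time convolution are bounded on $U^q$ and $V^q$, as recorded earlier. For the spatial cutoff, on the compact time support the commutator with the flow is small in $C^1_t L^2_x$ as $R\to\infty$, and $C^1_t L^2 \subset U^q$ on bounded intervals. Convergence of the pairing uses that $f$ is a distribution paired against functions converging in the Schwartz topology. With both inequalities in hand, Theorem \ref{LDUD} follows for both $DU^p_\Delta$ and $DV^p_\Delta$.
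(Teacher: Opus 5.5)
Your proposal is correct and follows the paper's intended route. The paper gives no written proof beyond the identity $(i\partial_t-\Delta)u=e^{-it\Delta}\, i\partial_t(e^{it\Delta}u)$ and the remark that Theorem \ref{DualUV} adapts to the new context, and conjugation by the free flow is exactly that reduction. The paper never addresses the mismatch of test classes, so your density step is extra care rather than a different route. The one adjustment: when the unadapted test functions $\psi$ are merely $L^2$-valued, $e^{-it\Delta}\psi$ is not Schwartz, so "convergence in the Schwartz topology" does not hold. For $f=(i\partial_t-\Delta)u\in DU^p_\Delta$, justify the convergence of the pairing instead via $\int\langle f,\phi\rangle\,dt=c\int\langle e^{it\Delta}u,\partial_t(e^{it\Delta}\phi)\rangle\,dt$ with $|c|=1$, using $e^{it\Delta}u\in L^\infty_tL^2_x$ and $\partial_t(e^{it\Delta}\phi_{N,R})\to\partial_t\psi$ in $L^1_tL^2_x$ (choosing $R=R(N)$ diagonally).
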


Just as we had a remark after Theorem \ref{DualUV}, we have the corresponding remark in place in the new context. 

\begin{r1}
In practice, the distributions $f$ that we test will have additional properties such as $f \in L^1_{t, loc} L^2_x$, and, as a consequence, the associated $u$ with the property $f = (i\partial_t -\Delta) u$ comes with a priori continuity properties $u \in C_t L^2_x$. In order to uniquely identify $u$, one has to impose a normalization, such as the standard one in $U^p_\Delta$, $\lim_{t \rightarrow a+} u(t)=0$, or the standard one in $V^p_\Delta$, $\lim_{t \rightarrow b-} u(t)=0$, or the one imposed by the initial data $u(t_0)=u_0$.
\end{r1}

\subsection{The resolution space}
The resolution space we use in this paper is a variant of $V^2_\Delta$ which accounts for the correct regularity. 

First a word about regularity. Assume that $-\frac12 \leq s <0$. In this case $\dot H^{s}$ keeps many of the reasonable properties of a space of functions, in particular it can be realized as the closure of Schwartz functions in its norm and it is a Banach space. For $s \geq 0$ we work with the inhomogeneous Sobolev space $H^s$ which has similarly good properties; this has the advantage of avoiding some technical details about $\dot H^s$ once $s \geq \frac{n}2$.

Correspondingly, for $-\frac12 \leq s <0$, we define $\dot X^s$ to be the space of functions $f: \R^3 \times \R \rightarrow \C$ for which the following norm is finite
\[
\|f\|_{\dot X^s} := \left( \sum_{k \in \Z} 2^{2ks} \|P_k f\|_{V^2_\Delta}^2 \right)^\frac12.
\]
Given that $V^2_\Delta$ is a Banach space, it is easy to see that $\dot X^s$ is a Banach space. For $s \geq 0$ we switch and define the inhomogeneous version $X^s$ using the norm
\[
\|f\|_{X^s} := \left( \|P_{\leq 0} f\|_{L^2}^2 + \sum_{k \in \N} 2^{2ks} \|P_k f\|_{V^2_\Delta}^2 \right)^\frac12.
\]

\section{Resonance analysis} \label{s:RA}

We first consider the nonlinearity $N(u)=u^2$ and seek to understand the resonant regime of the trilinear expression
\[
\begin{split}
I & = \int u v \bar w dx dt = (\hat u \ast \hat v \ast \widehat{ \bar w})(0) \\
& = \int \hat u(\xi,\tau) \hat v(\eta, \tilde \tau) \widehat{ \bar w}(-\xi-\eta, -\tau - \tilde \tau) d \xi d \eta d\tau d \tilde \tau \\
& = \int \hat u(\xi,\tau) \hat v(\eta, \tilde \tau) \overline{ \hat w}(\xi+\eta, \tau + \tilde \tau) d \xi d \eta d\tau d \tilde \tau. 
\end{split}
\]
In the beginning we assume that $u,v,w$ are free waves, thus they are supported on the paraboloid $\tau=\xi^2$. The resonant interactions in this expression are as follows: $\tau = \xi^2$, $\tilde \tau = \eta^2$, and $\tau + \tilde \tau = (\xi+\eta)^2$. This corresponds to $\tau + \tilde \tau= \xi^2+\eta^2=(\xi+\eta)^2 - 2 \xi \cdot \eta$, hence $\xi \cdot \eta=0$ gives rise to classical resonances. We take the analysis of the resonance one step further and ask whether the three directions of wave propagation are linearly independent or not, as this plays a crucial role in this paper. The waves propagate in the co-normal direction, thus we ask whether the three normals to the paraboloid $N(\xi)= \frac{(-2\xi,1)}{\sqrt{1+4|\xi|^2}}, N(\eta)= \frac{(-2\eta,1)}{\sqrt{1+4|\eta|^2}},
N(\xi+\eta)= \frac{(-2\xi-2\eta,1)}{\sqrt{1+4|\xi+\eta|^2}}$ are linearly independent or not. Ignoring the denominators for now, we need to compute
\[
\operatorname{vol}((-2\xi,1), (-2\eta,1), (-2\xi -2\eta,1))= \operatorname{vol}((-2\xi,1), (-2\eta,1), (0, 0, 0,1)),
\]
whose square is given by
\[
\det \begin{vmatrix} 
1+4 \xi^2 & 1 & 1 \\ 1 & 1+4\eta^2 & 1  \\ 1 & 1 & 1
\end{vmatrix}= \det \begin{vmatrix} 
4 \xi^2 & 0 & 0 \\ 0 & 4\eta^2 & 0  \\ 1 & 1 & 1
\end{vmatrix} = 16 |\xi|^2 |\eta|^2. 
\]
If we now allow for the case $\xi \cdot \eta \ne 0$, then the above is modified as follows
\[
\det \begin{vmatrix} 
1+4 \xi^2 & 1+ 4 \xi \cdot \eta & 1 \\ 1+4 \xi \cdot \eta & 1+4\eta^2 & 1  \\ 1 & 1 & 1
\end{vmatrix}= \det \begin{vmatrix} 
4 \xi^2 & 4 \xi \cdot \eta  & 0 \\ 4 \xi \cdot \eta & 4\eta^2 & 0  \\ 1 & 1 & 1
\end{vmatrix} = 16 (|\xi|^2 |\eta|^2 - |\xi \cdot \eta|^2). 
\]
The conclusion is the following:
\[
|N(\xi) \wedge N(\eta) \wedge N(\xi+\eta)|^2 = \frac{16 |\xi \wedge \eta|^2}{(1+4|\xi|^2)(1+4|\eta|^2)(1+4|\xi+\eta|^2)}.  
\]
Assume that we are close to the resonant regime in the sense  $|\xi \cdot \eta| \leq \frac12 |\xi| |\eta|$; in that case $|\xi \wedge \eta|^2 \approx |\xi|^2 |\eta^2|$ and therefore
\[
|N(\xi) \wedge N(\eta) \wedge N(\xi+\eta)|^2 \approx \frac{|\xi|^2 |\eta|^2}{(1+4|\xi|^2)(1+4|\eta|^2)(1+4|\xi+\eta|^2)}.  
\]
First, we consider a low-high to high interaction in which $|\xi| \les |\eta| \approx |\xi+\eta|$. Moreover a rescaling argument allows us to focus on the simpler case when $|\eta| \approx 1$. In that case we have
\begin{equation} \label{ndeg}
|N(\xi) \wedge N(\eta) \wedge N(\xi+\eta)| \approx  |\xi|.  
\end{equation}
Property \eqref{ndeg} plays a crucial role in our work. Geometrically \eqref{ndeg} reads as follows: if we consider two resonant frequencies $(\xi,\abs{\xi}^2),(\eta,\abs{\eta}^2)$ and their interaction output $(\xi+\eta, \abs{\xi+\eta}^2)$, then the corresponding wave propagation directions are not coplanar. Geometrically this makes sense as $ |N(\eta) \wedge N(\xi+\eta)| \approx  |\xi|$ and $N(\xi)$ creates an angle $\approx 1$ with the plane spanned by $N(\eta)$ and $ N(\xi+\eta)$. We note that this holds true because we impose the condition $|\xi \cdot \eta| \leq \frac12 |\xi| |\eta|$. Both statements would fail if $\xi$ and $\eta$ were parallel but this would be a non-resonant interaction. The exact quantification of the non co-planarity is very important, since a value on the right-hand side of \eqref{ndeg} that degenerates too much could potentially jeopardize our method. 

It may be useful to compare \eqref{ndeg} with the similar geometric condition used in the space-time resonance method from \cite{GeMaSh-1}. There, the authors define two sets
\[
\mathscr{T}=\{(\xi,\eta): \varphi(\xi+\eta)=\varphi(\xi) + \varphi(\eta)\}, \quad 
\mathscr{S}=\{(\xi,\eta): \nabla \varphi(\xi) = \nabla \varphi(\eta)\}; 
\]
here $\varphi$ is the function describing the characteristic hypersurface, which, in our particular case, is $\varphi(\xi)=|\xi|^2$. The set $\mathscr{T}$ is called the time resonance set, and  $\mathscr{S}$ is called the space resonance set. Based on these, one defines the space-time resonance set as their intersection, that is
\[
\mathscr{R} = \mathscr{T} \cap \mathscr{S}.
\]
The space time resonance method developed in \cite{GeMaSh-1} works very well assuming $\mathscr{R} = \emptyset$. This implies, for example, that $|N(\xi) \wedge N(\eta)| \ne 0$ for any $(\xi,\eta) \in \mathscr{T}$,  which is a weaker condition to satisfy than \eqref{ndeg}. However, the theory developed in this paper based on \eqref{ndeg} is able to handle more general data (in the scaling space) in the translation invariant setup. 

Next, we consider a high-high to low interaction in which $|\xi+ \eta| \les |\xi| \approx |\eta|$. If we normalize $|\xi| \approx 1$, then 
\[
|N(\xi) \wedge N(\eta) \wedge N(\xi+\eta)| \approx 1.   
\]
This will not play a role in our analysis because there is another phenomena that we highlight in this case: the incompatibility of $|\xi+\eta| \ll 1$ and $|\xi \cdot \eta| \leq \frac12 |\xi||\eta|$. Another way to phrase this is to notice that if $|\xi+ \eta| \ll 1 \approx |\xi| \approx |\eta|$, then $||\xi+\eta|^2 - \xi^2 - \eta^2| \approx 1$, and this corresponds to the fact that, in order for the $I$ to be non-zero, at least one factor needs to have modulation $\ges 1$. We can quantify this precisely as follows. Assume that $u$ and $v$ are localized at frequency $\approx 1$ and $w$ is localized at frequency $\ll 1$. Then we have
\begin{equation} \label{lmi}
\int Q_{\prec 1} u \cdot Q_{\prec 1} v \cdot \overline{Q_{\prec 1} w} dx dt =0.
\end{equation}

We note that this last phenomenon is the major difference between the $u^2$ nonlinearity and the $|u|^2=u \bar u$ nonlinearity. In the latter one the high-high to low interactions do not enjoy a modulation bound from below and this makes the analysis significantly more involved.

Next we move beyond free solutions and consider the more general setup of functions whose Fourier transforms are not necessarily supported on the paraboloid. In this case, it is important to chase the effect of modulations. The key observation here is that if all three factors have low modulation, then the interaction involves non-parallel frequencies. To make this precise, consider the expression
\begin{equation} \label{Ileqk}
I  = \int Q_{\prec k} u Q_{\prec k} v \overline{Q_{\prec k}  w} dx dt, 
\end{equation}
where $u$ is localized at frequency $\approx 2^k$ and $v,w$ are localize at frequency $\approx 1$. In the support of $\widehat{Q_{\prec k} u}$ we have that $\tau_1=\xi_1^2+m_1, |\xi_1| \approx 2^{k}, |m_1| \ll 2^{k}$, while in the support of $\widehat{Q_{\prec k} v}$ we have $ \tau_2=\xi^2_2+m_2, |\xi_2| \approx 1, |m_2| \ll 2^{k}$; the support of $\widehat{Q_{\prec k} w}$ has a similar characterization to that of $\widehat{Q_{\prec k} v}$. To obtain non-trivial contribution to $I$, we need to impose 
\[
|\tau_1+ \tau_2 - (\xi_1+\xi_2)^2 | \ll 2^k,
\]
which translates into $|-2 \xi_1 \cdot \xi_2 + m_1 +m_2| \ll 2^k$, hence $|\xi_1 \cdot \xi_2| \ll 2^k$ or $|\angle(\xi_1,\xi_2)-\frac{\pi}2| \ll 1$. Thus the interacting frequencies are orthogonal; this is meant to be read as the interacting frequencies of $Q_{\prec k} u$ and $Q_{\prec k} v$. As a consequence of this, within the space-time Fourier support of the terms involved in \eqref{Ileqk} a similar property to \eqref{ndeg} holds true; to be more precise, if $(\xi_1,\tau_1)$ is in the support of $\widehat{Q_{\prec k} u}$, $(\xi_2, \tau_2)$ is in the support of $\widehat{Q_{\prec k} v}$, then
\begin{equation} \label{ndegg}
|N_{m_1}(\xi_1,\tau_1) \wedge N_{m_2}(\xi_2,\tau_2) \wedge N_{m_3}(\xi_1+\xi_2, \tau_1 + \tau_2)| \approx  |\xi_1|,  
\end{equation}
where $N(\xi_1,\tau_1)$ is the normal to the translated paraboloid $\tau=\xi^2+m_1, m_1 = \tau_1-\xi_1^2$
and so on. The conclusion of the above is that, the result in \eqref{ndeg} generalizes from the free wave setup to more general functions who display appropriate modulation constraints. 

There is one last technical observation. Consider the following version of \eqref{Ileqk}:
\begin{equation} \label{Ileq2k}
I  = \int Q_{\prec 2k} u Q_{\prec 2k} v \overline{Q_{\prec 2k}  w} dx dt.
\end{equation}
The same computation above shows that if $(\xi_1,\tau_1)$ is in the support of $\widehat{Q_{\prec 2k} u}$, $(\xi_2, \tau_2)$ is in the support of $\widehat{Q_{\prec 2k} v}$, then the angular constraint above improves to $|\xi_1 \cdot \xi_2| \ll 2^{2k}$ or $|\angle(\xi_1,\xi_2)-\frac{\pi}2| \ll 2^k$.

We finish this section with the analysis of the nonlinearity $(\bar u)^2$. Here the relevant integral is 
\[
I  = \int \bar u \bar v \bar w dx dt. 
\]
We make the following claim: if $u,v,w$ are localized at frequency $\xi \approx 2^{k_1}, 2^{k_2}, 2^{k_3}$ respectively, then, with $k_{max}=\max\{k_1,k_2,k_3\}$,  
\begin{equation} \label{barmod}
\int \overline{Q_{\prec 2k_{max}} u} \cdot \overline{Q_{\prec 2k_{max}} v} \cdot \overline{Q_{\prec 2k_{max}} w} dx dt =0.
\end{equation}
Indeed assume that $(\xi,\xi^2+m_1), (\eta,\eta^2+m_2)$ are in the Fourier support of $Q_{\prec 2k_{max}} u$, respectively $Q_{\prec 2k_{max}} v$; their output modulation is then
\[
- \xi^2 - m_1 -\eta^2 - m_2 - (\xi+\eta)^2  =   -m_1 -m_2 - \xi^2 -\eta^2 - (\xi+\eta)^2, 
\]
which, in absolute value, is $\approx 2^{2k_{max}}$. A consequence of \eqref{barmod} is that the nonlinearity $(\bar u)^2$ does not produce resonances, and in fact that in the trilinear expression for $I$ above, it suffices to consider the case when at least one term has rather high modulation. 

\section{The main trilinear estimate}

Below we use the notation $k_{max}= \max(k_1,k_2,k_3)$ and let $k_{min}= \min(k_1,k_2,k_3)$. The use of notation $u_{k}$ (or $v_k$ etc.) is meant to indicate that $u_{k}$ is localized at frequency $2^{k}$ in the sense $u_{k}=\tilde P_{k} u_{k}$. Our main result in this section is the following.

\begin{proposition} \label{Ptri} 
For any $\epsilon>0$, the following hold true:
\begin{enumerate}[label=(\roman*)]
    \item If $k_3 > k_{min}$ or $|k_{min}-k_{max}| \leq 10 $,
    \begin{equation} \label{mtri}
        | \int u_{k_1} v_{k_2} \bar w_{k_3} dx dt| \leq C(\epsilon) 2^{\epsilon(k_{max}-k_{min})}
        2^{-\frac{k_{max}}2} \|u_{k_1}\|_{V^2_\Delta} \|v_{k_2}\|_{V^2_\Delta}  \|w_{k_3}\|_{V^2_\Delta};
    \end{equation}

    \item If $k_3=k_{min} \leq k_{max} -10$,
    \begin{equation} \label{mtri2}
        | \int u_{k_1} v_{k_2} \bar w_{k_3} dx dt| \leq C(\epsilon)  2^{(1-\epsilon)(k_{min}-k_{max})} 2^{-\frac{k_{max}}2} \|u_{k_1}\|_{V^2_\Delta} \|v_{k_2}\|_{V^2_\Delta}  \|w_{k_3}\|_{V^2_\Delta}.
    \end{equation}

    \item For all $k_1,k_2,k_3$,
    \begin{equation} \label{mtri3}
        | \int \bar u_{k_1} \bar v_{k_2} \bar w_{k_3} dx dt| \leq C(\epsilon)  2^{(1-\epsilon)(k_{min}-k_{max})} 2^{-\frac{k_{max}}2} \|u_{k_1}\|_{V^2_\Delta} \|v_{k_2}\|_{V^2_\Delta}  \|w_{k_3}\|_{V^2_\Delta}.
    \end{equation}

\end{enumerate}
\end{proposition}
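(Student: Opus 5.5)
We first use scaling to set $k_{max}=0$. Under $u(t,x)\mapsto \lambda u(\lambda^2 t,\lambda x)$ the $V^2_\Delta$ norm picks up $\lambda^{-1/2}$ per function, and $dx\,dt$ gives $\lambda^{-5}$. This reproduces the factor $2^{-k_{max}/2}$, so it suffices to prove the bounds with $k_{max}=0$. Frequency conservation ($\xi_1+\xi_2=\xi_3$) forces the two largest frequencies to be comparable. This leaves three configurations: all three frequencies comparable, low-high-to-high with $k_1$ or $k_2$ equal to $k_{min}=k\le -10$ (case (i)), and high-high-to-low with $k_3=k_{min}$ (case (ii)). Case (iii) is handled like (ii).

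For cases (ii) and (iii) we use the modulation gap. By \eqref{lmi} and \eqref{barmod}, the integral vanishes unless at least one factor carries modulation $\succeq 0$; we split each factor as $Q_{\prec 0}+Q_{\succeq 0}$. The factor with high modulation, call it $F$, is placed in $L^2$ using \eqref{V2L2}, which gains $2^{0}$ at unit modulation. The product of the remaining two is estimated in $L^2$ by the bilinear bound \eqref{V2bil}, which gives $\la k\ra^2 2^{k}$ when one of them is at frequency $2^{k}$. If $F=w_{k_3}$, the remaining pair is the high-high pair; then we instead put the low factor $w$ in $L^2$ and estimate the high-high product in $L^2$, using Bernstein/Strichartz on the low-frequency factor to recover $2^{k}$. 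The log loss $\la k\ra^2$ is absorbed into $2^{-\epsilon k}$. This yields $2^{(1-\epsilon)k}$. It also yields (iii), since for the conjugated product the gap is $\approx 2^{2k_{max}}$ regardless of which frequencies are small.

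For case (i) with $|k_{min}-k_{max}|\le 10$, the all-comparable case, we proceed similarly. We either use the Strichartz bounds \eqref{V2Str} in $L^3_{t,x}$ (the admissible pair $q=r=10/3$ together with Bernstein, up to $\epsilon$ losses), or, when all modulations are small, Theorem \ref{thm trilinear transference} after an angular decomposition into transverse pieces.

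The main case is (i) with $k_{min}=k\le-10$ low-high-to-high, say $u=u_k$. We split each factor at modulation $2^{2k}$.
\begin{itemize}
\item If all three factors have low modulation, \eqref{Ileq2k} forces the interacting frequencies to be orthogonal up to angle $2^{k}$, and \eqref{ndegg} gives degenerate transversality $\approx 2^k$. We decompose $v,w$ into angular sectors and $u$ into cubes of size $2^{k}$ via $\chi_q(D_x)$, and apply the rescaled trilinear transference (the general form Theorem \ref{trilml}, built on the degenerate-curvature estimate of \cite{BeCh}) to each triple. This is expected to give $2^{-\epsilon k}$ per triple times $V^2$ norms of the pieces. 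The pieces are then summed by orthogonality; since $V^2$ lacks square-summability, we use Lemma \ref{l2V2}, which costs only $\la k\ra^2$ and is absorbed into $2^{-\epsilon k}$.
\item If some factor has modulation $\succeq 2k$, we place that factor in $L^2$ with gain $2^{-k}$ from \eqref{V2L2}. The product of the other two is estimated in $L^2$ by \eqref{V2bil}: for a high-low pair this is $2^{k}$, giving $O(\la k\ra^2)$ overall. For a high-high pair, with the high-modulation factor being $u_k$, we instead use the refined high-modulation structure \eqref{eq:x-v2}. That is, we localize in time to intervals of length $2^{-k}$ (or exploit the orthogonality in $\xi_1\cdot\xi_2$ imposed by the modulation size) and combine it with local-in-time bilinear estimates for the high-high pair, to recover a bound losing only $2^{-\epsilon k}$.
\end{itemize}

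The main obstacle is the all-low-modulation, low-high-to-high term. It requires both the degenerate trilinear restriction estimate and a lossless transference to $V^2_\Delta$ that survives the decomposition into $2^{k}$-pieces. Getting only a $2^{-\epsilon k}$ loss there, rather than the polynomial loss that interpolating $U^2$ against $U^q$ would give, is where Theorem \ref{trilml} and Lemma \ref{l2V2} must be used carefully.
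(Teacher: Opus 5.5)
Your skeleton matches the paper's: you rescale to $k_{max}=0$, split by modulation, quote Theorem~\ref{trilml} for the all-low-modulation low--high--to--high term, and put a high-modulation factor in $L^2$ against \eqref{V2bil} whenever the remaining pair is low--high. The gaps are the two subcases where the \emph{low-frequency} factor carries the high modulation. In both, the tools you name do not produce the required power of $2^k$.

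\emph{Case (ii).} Take the subcase where the high modulation sits on $w_{k_3}$; the same applies in (iii) when it sits on the lowest-frequency factor. Here you need $\|\tilde P_{k}(Q_{\prec 0}u\cdot Q_{\prec 0}v)\|_{L^2_{t,x}}\lesssim 2^{(1-\epsilon)k}$. Bernstein on the output followed by Strichartz gives only $2^{k/2}\|u\|_{L^4_tL^3_x}\|v\|_{L^4_tL^3_x}$, and no other Hölder splitting into Strichartz norms does better. This misses \eqref{mtri2} by a factor $2^{-(\frac12-\epsilon)k}$. That half power is exactly what produces the decay $2^{(\frac12-\epsilon)(k_{min}-k_{max})}$ used to sum in the proof of Theorem~\ref{thm gwp}.

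The missing ingredient is the high--high--to--low bilinear estimate \eqref{impr}. After localizing $\hat u,\hat v$ to antipodal cubes of size $2^k$, the two caps are transversal (normals at angle $\approx 1$), and their translates intersect in sets of measure $\lesssim 2^{2k}$. So \eqref{bilGe} gives $2^{k}$ for free waves, hence in $U^2_\Delta$. Interpolating with the $2^{k/2}$ bound in $U^4_\Delta$ then gives $2^{(1-\epsilon)k}$ in $V^2_\Delta$.

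\emph{Case (i).} The problem term is $Q_ju\cdot v\cdot\bar w$ with $2k\preceq j\preceq k$ and $v,w$ at modulation $\prec 2k$. The direct bound $\|Q_ju\|_{L^2}\|\tilde P_k(v\bar w)\|_{L^2}$ gives $2^{(k-j)/2}$ times the $V^2_\Delta$ norms, a loss of up to $2^{-k/2}$ near $j=2k$. The loss comes from estimating all of $\tilde P_k(v\bar w)$, although only its part at time frequency $\lesssim 2^j$ pairs with $Q_ju$. Localizing $Q_ju$ in time, via \eqref{eq:x-v2} or otherwise, does not see this.

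A bilinear estimate for the pair does not help either. The factors $v$ and $\bar w$ have frequencies within $2^k$ of each other on opposite paraboloids, so the normals make an angle $\approx 2^k$. Consequently $\|\tilde P_k(v\bar w)\|_{L^2}\lesssim 2^{k/2}$ is sharp, as two coherent wave packets show.

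The paper localizes in $\tau$ rather than in $t$. It decomposes $v,w$ into cubes $q\in\mathcal C_k$ and time-frequency intervals $I\in\mathcal I_j$, keeping only pairs with $d(I,I')\lesssim 2^j$. By low modulation, each $v_{q,I}$ is then frequency-localized in a slab of size $2^j\times2^k\times2^k$. The anisotropic Strichartz estimate, Lemma~\ref{prop:ref-strichartz-2}, gives $\|v_{q,I}\|_{L^4_{t,x}}\lesssim 2^{j/4}\|v_{q,I}\|_{U^4_\Delta}$, so each piece costs $2^{-j/2}\cdot2^{j/4}\cdot2^{j/4}=1$. A slab version of Lemma~\ref{l2V2} sums the pieces at cost $\la j\ra^2$, and the range $j\succeq k$ is handled by Bernstein and Strichartz. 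Your remark about the constraint on $\xi_1\cdot\xi_2$ points toward this localization but never turns it into a gain.

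Two smaller points:
\begin{enumerate}
\item In the comparable-frequency case, $L^3_{t,x}$ Strichartz with Bernstein is unavailable. Bernstein cannot lower the Lebesgue exponent, and $L^3_{t,x}$ fails for frequency-localized free waves in $\R^3$. Your modulation-split route there does work.
\item Theorem~\ref{trilml} should be applied to $u,v,w$ directly. Lemma~\ref{l2V2} square-sums pieces only in $U^q$, so per-piece bounds in $V^2_\Delta$ cannot be summed that way.
\end{enumerate}
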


The first two estimates are needed in the analysis of the $u^2$ component of the nonlinearity; while \eqref{mtri2} is more technical, \eqref{mtri} is far more involved as it contains the resonances and requires the use of the new trilinear restriction theory. The third estimate is needed in the analysis of the $\bar{u}^2$ component of the nonlinearity; while it is the strongest of the three estimates, its proof is rather short due to the lack of resonances.

We believe that, in most cases, the $\epsilon$-loss in the above estimates can be removed, but this would require substantially more work both in this paper and in our companion paper \cite{BeCh}. On the other hand, removing the $\epsilon$-loss above would have no effect on the particular application to \eqref{QNLS} and the statement of Theorem \ref{thm gwp}.

Assuming Proposition \ref{Ptri}, we can now proceed with the proof of Theorem \ref{thm gwp}. 

\begin{proof}[Proof of Theorem \ref{thm gwp}] To keep the exposition compact, assume $N(z)=z^2$; it is easy to adapt the argument below for the general case $N(z)=c_1 z^2 + c_2 \bar z^2$.

For fixed $z_0 \in \dot H^{-\frac12}$, we define the operator $T: \dot X^{-\frac12} \rightarrow \dot X^{-\frac12}$ as the solution of the equation
\[
(i \partial_t - \Delta) Tz= z^2, \quad z(0)=z_0.
\]
Below we establish  that $T: \dot X^{-\frac12} \rightarrow \dot X^{-\frac12}$ is a contraction. 
The first step is a qualitative observation. A direct estimate shows that
\[
\| P_{k_3} ( P_{k_1} z \cdot P_{k_2} z) \|_{L^\infty_t L^2_x} \les \|P_{k_1} z\|_{L^\infty_t L^\infty_x} 
\| P_{k_2} z\|_{L^\infty_t L^2_x} \les 2^{\frac{nk_1}2} \|P_{k_1} z\|_{V^2_\Delta} \|P_{k_2} z\|_{V^2_\Delta}.
\]
This guarantees that $P_{k_3} ( P_{k_1} z \cdot P_{k_2} z) \in L^1_{t,loc} L^2_x$, hence  $P_{k_3} T ( P_{k_1} z, P_{k_2} z) \in C_t L^2_x$, in particular it is a ruled function; this also provides the qualitative estimate $P_{k_3} T ( P_{k_1} z, P_{k_2} z) \in C_t \dot H^{-\frac12}$, which can then be recovered globally using the uniform estimates described below. 

From \eqref{mtri} and \eqref{mtri2} we obtain the following bound
\[
2^{-\frac{k_3}2} \| P_{k_3} T ( P_{k_1} z, P_{k_2} z) \|_{V^2_\Delta} \les 2^{(\frac12 - \epsilon)(k_{min}-k_{max})} 2^{-\frac{k_1}2}  \|P_{k_1} z \|_{V^2_\Delta} 2^{-\frac{k_2}2} \|P_{k_2} z \|_{V^2_\Delta}. 
\]
A direct summation argument shows that $T$ is bounded on $\dot X^{-\frac12}$. Moreover, we also obtain the contraction type bound
\[
\| Tz_1 - Tz_2 \|_{\dot X^{-\frac12}} \les (\| z_1 \|_{\dot X^{-\frac12}} + \| z_2 \|_{\dot X^{-\frac12}}) \| z_1-z_2 \|_{\dot X^{-\frac12}}. 
\]
Therefore, under the assumption that the initial data $\|z_0\|_{\dot H^{-\frac12}}$ is small enough, $T$ becomes a contraction in $\dot X^s$, which in turns leads to the existence of a fixed point $z$ which corresponds to a global solution of the equation \eqref{QNLS}. Scattering is a consequence of the fact that functions in $V^2_\Delta$ have well-defined limits as $t \rightarrow \pm \infty$. 

The rest of the claims in Theorem \ref{thm gwp} are standard and follow from similar arguments. 

\end{proof}

The rest of this section is dedicated to proving Proposition \ref{Ptri}. The expressions are symmetric in $k_1,k_2$ so we can freely assume $k_1 \leq k_2$. Thus in what follows we focus on the case $|k_2-k_3| \leq 10$ and one of them is $k_{max}$. In order to keep the numerology compact, it is convenient to rescale the estimate so that we have $k_{max}=0$. 

\subsection{Proof of \eqref{mtri2}}
Since $k_3=k_{min} \leq k_{max}-10$, it follows that $|k_1-k_2| \leq 4$. We rescale the estimate so that $k_2=0$, thus $|k_1| \leq 4$ and $k_3 \leq -10$. 
Then \eqref{lmi} implies that at least one term has to have modulation $\ges 1$. Assume that a high frequency term, say $u_{k_1}$ has high modulation. In that case we use \eqref{UVX} and \eqref{V2bil} to obtain
\[
\| Q_{\succeq 1} u_{k_1} \cdot v_{k_2} \cdot \bar{w}_{k_3} \|_{L^1_{t,x}} 
\les \| Q_{\succeq 1} u_{k_1} \|_{L^2_{t,x}} \| v_{k_2} \cdot \bar{w}_{k_3} \|_{L^2_{t,x}} \les \la k_3 \ra^2 2^{k_3} \|u_{k_1}\|_{V^2_\Delta} \|v_{k_2}\|_{V^2_\Delta}
\|w_{k_3}\|_{V^2_\Delta}. 
\]
A similar estimate is obtained if $v_{k_2}$ has the high modulation. Thus it remains to consider the term $\| \tilde P_{k_3}(Q_{\prec 1} u_{k_1} \cdot Q_{\prec 1} v_{k_2}) \cdot \overline{Q_{\succeq 1} w_{k_3}} \|_{L^1_{t,x}} $. We start with the bilinear estimate for free solutions
\begin{equation} \label{impr}
\|\tilde P_{k_3}(e^{-it\Delta} f \cdot e^{-it\Delta} g) \|_{L^2_{t,x}} \les 2^{k_3} \|f\|_{L^2_x} \|g\|_{L^2_x},
\end{equation}
where $f,g$ are supported at frequency $2^{k_1}$, respectively $2^{k_2}$ (recall that $2^{k_1} \approx 2^{k_2} \approx 1$). We postpone the proof of \eqref{impr} and continue with our argument. From \eqref{impr} we obtain
\[
\|\tilde P_{k_3}(z_{k_1} \cdot z_{k_2}) \|_{L^2_{t,x}} \les  2^{k_3} \|z_{1}\|_{U^2_\Delta} \|z_{2}\|_{U^2_\Delta}; 
\]
indeed it is easy to see that \eqref{impr} generalizes to atoms in $U^2_\Delta$. On the other hand, using the Bernstein inequality, we have
\[
\begin{split}
\|P_{k_3}(z_{k_1} \cdot  z_{k_2}) \|_{L^2_{t,x}} & \les 2^{\frac{k_3}2} 
\|P_{k_3}(z_{k_1} \cdot  z_{k_2}) \|_{L^2_{t}L^\frac32_x} 
\les 2^{\frac{k_3}2} \|z_{k_1}\|_{L^4_t L^3_x} \|z_{k_2}\|_{L^4_t L^3_x} \\
& \les 2^{\frac{k_3}2} \|z_{k_1}\|_{U^4_\Delta}  \|z_{k_2}\|_{U^4_\Delta}. 
\end{split}
\]
By interpolating between the two estimates above, and using the inclusion $U^a_\Delta \subset V^2_\Delta$ for any $a >2$,  we obtain
\[
\|P_{k_3}( z_{k_1} \cdot z_{k_2}) \|_{L^2_{t,x}} \les C_\epsilon 2^{k_3(\frac12+\frac{1-\epsilon}2)} \|z_{k_1}\|_{V^2_\Delta}  2^{k_2} \|z_{k_2}\|_{V^2_\Delta},
\]
This allows us to estimate our original term as follows
\[
\begin{split}
& \| \tilde P_{k_3}(Q_{\prec 1} u_{k_1} \cdot Q_{\prec 1} v_{k_2}) \cdot \overline{Q_{\succeq 1} w_{k_3}} \|_{L^1_{t,x}} \les \| \tilde P_{k_3}(Q_{\prec 1} u_{k_1} \cdot Q_{\prec 1} v_{k_2})  \|_{L^2_{t,x}} \| Q_{\succeq 1} w_{k_3} \|_{L^2_{t,x}} \\
 \les & C_\epsilon 2^{k_3(\frac12+\frac{1-\epsilon}2)} \|z_{k_1}\|_{V^2_\Delta}  \|z_{k_2}\|_{V^2_\Delta} \|w_{k_3}\|_{V^2_\Delta}. 
\end{split}
\]
For proving \eqref{impr}, we notice that it suffices to consider the case when $\hat f$ and $\hat g$ are localized in cubes in $\R^3$ of size $2^{k_3}$; the two cubes have to be symmetric with respect to the origin so that the support of $\hat f \ast \hat g$ is localized at frequency $2^{k_3}$.  The global estimate is then recovered using standard orthogonality arguments.

With the above localization in place, we use Plancherel so that we estimate $\hat f(\xi) \delta_{\tau=\xi^2} \ast \hat g(\eta) \delta_{\tau=\eta^2}$ in $L^2_{\tau,\xi}$. It is easy to see that the intersection of any translate of the support of $\hat f(\xi) \delta_{\tau=\xi^2}$ with the support of $g(\eta) \delta_{\tau=\eta^2}$ has $\mathcal{H}^{1}$ measure $\les 2^{2k_3}$ and the angle between any normal to the paraboloid in the support of $\hat f(\xi) \delta_{\tau=\xi^2}$ and any normal to the paraboloid in the support of $\hat g(\xi) \delta_{\tau=\xi^2}$ is bounded below by $\ges 1$. As a consequence of \eqref{bilGe} we have
\[
\| \hat f(\xi) \delta_{\tau=\xi^2} \ast \hat g(\eta) \delta_{\tau=\eta^2} \|_{L^2} \les 2^{k_3} \| f\|_{L^2} \|g\|_{L^2}. 
\]
This concludes the argument for \eqref{impr}. 

\subsection{Proof of \eqref{mtri3}} We recall \eqref{barmod}, thus it suffices to consider the case when at least one term has modulation $\succeq 1$. But then the argument is identical to the one used in the proof of \eqref{mtri2}. 

\subsection{Proof of \eqref{mtri}} The estimate is symmetric in $k_1$ and $k_2$, hence we can freely assume that $k_1 \leq k_2$. The additional constraints imply that it suffices to consider the case $|k_2-k_3| \leq 10$. By rescaling it suffices to consider the case $k_3=0$; to keep the number of parameters at a minimum we can assume that $k_2=0$ and $k_1 \leq 5$; the particular choice $k_2=0$ plays no role and the argument stays the same if $|k_2| \leq 10$. We also relabel $k_1=k$. 

Thus for the rest of this section the setup will be as follows. The functions $v,w$ are localized at frequency $\approx 1$, while the function $u$ is localized at frequency $\approx 2^k \les 1$. We skip the use of $u_k, v_0,w_0$ to indicate the frequency localization, and simply use $u,v,w$.  

The argument is split into two main parts, based on whether at least one term has high modulation or all of them have low modulation. Our first claim is about the case when at least one term has high modulation. 
\begin{lemma} \label{l:trihm}
The following holds true:
  \begin{equation} \label{trihmh}
    \| u \cdot Q_{\succeq 2k}v \cdot \bar w \|_{L^1} + \| u \cdot v \cdot
    \overline{ Q_{\succeq 2k} w} \|_{L^1} \les 
    \langle k\rangle^2
    \|u\|_{V_{ \Delta }^{ 2 }} \|v\|_{V_{ \Delta }^{ 2 }} \|w\|_{ V_{ \Delta
    }^{ 2 }}, 
  \end{equation}
  \begin{equation} \label{trihml}
    \| Q_{\succeq 2k} u \cdot v \cdot \bar w \|_{L^1} 
    \les \langle k \rangle^3  
    \|u\|_{ V_{ \Delta }^{ 2 }} 
    \|v\|_{ V_{ \Delta }^{ 2 }} 
    \|w\|_{ V_{ \Delta }^{ 2 }}.
  \end{equation}
\end{lemma}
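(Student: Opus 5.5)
The plan is to place the high-modulation factor in $L^2_{t,x}$, using \eqref{V2L2}, and the remaining pair in $L^2_{t,x}$, using the bilinear estimate \eqref{V2bil}. Throughout we are in the rescaled setup: $u$ is at frequency $2^k\les 1$, while $v,w$ are at frequency $\approx 1$.

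\textbf{Proof of \eqref{trihmh}.} Take the term with $Q_{\succeq 2k} v$ (the one with $Q_{\succeq 2k}w$ is identical). By H\"older,
\[
\| u \cdot Q_{\succeq 2k}v \cdot \bar w \|_{L^1} \le \|Q_{\succeq 2k} v\|_{L^2_{t,x}} \, \|u \bar w\|_{L^2_{t,x}} .
\]
The first factor is handled by \eqref{V2L2}: $\|Q_{\succeq 2k}v\|_{L^2}\les 2^{-k}\|v\|_{V^2_\Delta}$. The second factor is a low-high product, so \eqref{V2bil} with $k_1=k$, $k_2=0$ gives $\|u\bar w\|_{L^2}\les \la k\ra^2 2^{k}\|u\|_{V^2_\Delta}\|w\|_{V^2_\Delta}$. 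Conjugation is harmless here: $\bar w$ solves the conjugate flow, and the bilinear $L^2$ estimate for a paraboloid paired with the reflected paraboloid is still governed by \eqref{bilGe}, with the same transversality at frequencies $2^k\ll1$ versus $1$. The powers $2^{-k}$ and $2^{k}$ cancel, which yields \eqref{trihmh} with the loss $\la k\ra^2$.

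\textbf{Proof of \eqref{trihml}.} The same pairing fails, since $\|Q_{\succeq 2k}u\|_{L^2}\les 2^{-k}\|u\|_{V^2_\Delta}$ while $\|v\bar w\|_{L^2}$ gains no power of $2^k$. I would decompose dyadically, $Q_{\succeq 2k}u=\sum_{j\succeq 2k}Q_j u$, and split each piece into two regimes.

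\emph{Regime $j\succeq 1$.} Here $\|Q_j u\|_{L^2}\les 2^{-j/2}\|u\|_{V^2_\Delta}$. Pair this with $\|v\bar w\|_{L^2}$, which is bounded using Strichartz \eqref{V2Str} in $L^4_tL^3_x\times L^4_tL^6_x$ together with Bernstein at frequency $1$. The sum over $j\succeq 1$ then converges.

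\emph{Regime $2k\preceq j\preceq 0$.} If $Q_ju$ has modulation $2^j$ while $v$ and $w$ both have modulation $\ll 2^j$, the product is nonzero only when the output modulation $|2\xi_1\cdot\xi_2|\approx 2^j$. So either $v$ or $w$ must carry modulation $\succeq j$, or the frequencies must lie in the angular sector $|\xi_1\cdot\xi_2|\approx 2^j$. In the first sub-case, put both high-modulation factors in $L^2$ and the third in $L^\infty_{t,x}$: $2^{-j/2}\cdot 2^{-j/2}\cdot\|u\|_{L^\infty}$ with $\|u\|_{L^\infty}\les 2^{3k/2}\|u\|_{V^2_\Delta}$. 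This gives $2^{-j}2^{3k/2}$, which is too weak near $j=2k$.

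So instead I would use the transfer \eqref{eq:x-v2}. Localize in time to intervals $T\in\mathcal T_{-k}$, i.e. of length $2^{-2k}$ after the rescaling adapted to $u$. On each interval, put $Q_ju$ in $L^\infty_tL^2_x(T)$ and control $v\bar w$ in $L^1_tL^2_x(T)$ via local-in-time Strichartz or bilinear bounds. Then sum over $T$ with Cauchy--Schwarz, using the square-sum control from \eqref{eq:x-v2} on $Q_ju$ and the analogous $\ell^2_T$ control of $\|v\|_{V^2_\Delta(T)}$ and $\|w\|_{V^2_\Delta(T)}$ from \eqref{U2int}/$V^2$ subadditivity. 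Each $j$ should then contribute $\la k\ra^2$, and summing the $\approx\la k\ra$ values of $j$ gives the $\la k\ra^3$ in \eqref{trihml}.

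\textbf{Main obstacle.} The hard step is the $L^1_tL^2_x$ bound for $v\bar w$ on intervals of length $2^{-2k}$. It must hold without any loss in $2^{-k}$, and it is here that the frequency localization of $u$ has to be exploited, through a bilinear estimate with the $2^{k}$-thick output localization as in \eqref{impr}. I would first try cube decomposition: split $v$ and $w$ into cubes of size $2^k$ in frequency, paired so that $\tilde P_k(v\bar w)$ is nonzero. Then apply \eqref{impr}-type bounds on each pair, and resum using Lemma \ref{l2V2}, whose loss $\la k\ra^2$ is acceptable.
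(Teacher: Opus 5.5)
Your proof of \eqref{trihmh} is correct and is the paper's argument: H\"older, with \eqref{V2L2} on the high-modulation factor and \eqref{V2bil} on the low--high pair. Your regime $j\succeq 1$ for \eqref{trihml} is also fine.

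The gap is in the regime $2k\preceq j\preceq 0$, which is the heart of \eqref{trihml}. First, the $\ell^2_T$ control of $\|v\|_{V^2_\Delta(T)}$ and $\|w\|_{V^2_\Delta(T)}$ over intervals $|T|=2^{-2k}$ does not exist. For a free wave $v=e^{-it\Delta}f$, every $T$ contributes $\|f\|_{L^2}$, so $\sum_T\|v\|^2_{V^2_\Delta(T)}=\infty$. Only the variation seminorm is superadditive, and local Strichartz or bilinear bounds need the full norm. Also, \eqref{U2int} goes the other way: it bounds a global norm by local ones. So after putting $Q_ju$ in $\ell^2_TL^\infty_tL^2_x$ via \eqref{eq:x-v2}, square-summability must come from a global bound on $v\bar w$, namely $\|v\bar w\|_{L^1_tL^2_x(T)}\le 2^{-k}\|v\bar w\|_{L^2_{t,x}(T)}$. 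You would then need $\|v\bar w\|_{L^2}\les 2^{k}$, and this is not available. The estimate \eqref{impr} concerns $v\cdot w$ with frequencies in \emph{opposite} cubes, whose normals are $\approx 1$ apart. Here $v$ and $\bar w$ come from cubes at distance $\approx 2^k$, the normals $(-2\xi,1)$ and $(-2\eta,1)$ make an angle $\approx 2^k$, and \eqref{bilGe} gives only $\|v_q\bar w_{q'}\|_{L^2}\les 2^{k/2}$. This bound is sharp: Galilean-transform to frequencies $2^k$ separated by $2^k$. Even after localizing $v,w$ to the slabs $2^j\times 2^k\times 2^k$ forced by the modulation constraint, the best bound is $\|v\bar w\|_{L^2}\les 2^{j/2}$. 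Your scheme therefore loses $2^{(j-2k)/2}$, up to $2^{-k}$ near $j=0$. Two slab wave packets separating at speed $2^k$ show this loss is intrinsic to the $L^\infty_t(T)\times L^1_t(T)$ pairing: for $j\succ 2k$, $Q_ju$ oscillates on the time scale $2^{-j}\ll|T|$, and \eqref{eq:x-v2} discards exactly the gain $\|Q_ju\|_{L^2}\les 2^{-j/2}$.

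The missing idea is to avoid time localization entirely and use an anisotropic $L^4$ Strichartz estimate. For $j\succeq k$, the paper pairs $\|Q_{\succeq k}u\|_{L^2}\les 2^{-k/2}$ with Bernstein at the output frequency: $\|\tilde P_k(v\bar w)\|_{L^2}\les 2^{k/2}\|v\|_{L^4_tL^3_x}\|w\|_{L^4_tL^3_x}$. For $2k\preceq j\preceq k$, it keeps $Q_ju$ in $L^2_{t,x}$. It then splits $v,w$ into $2^k$-cubes and time-frequency intervals of length $2^j$; at low modulation these are spatial slabs $2^j\times2^k\times2^k$. Lemma \ref{prop:ref-strichartz-2} gives $\|v_{q,I}\|_{L^4}\les 2^{j/4}\|v_{q,I}\|_{U^4_\Delta}$, so each piece contributes $2^{-j/2}\cdot 2^{j/4}\cdot 2^{j/4}=O(1)$. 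Resumming with the square-function bound of Lemma \ref{l2V2} costs $\la j\ra^2$, and summing over the $\approx|k|$ values of $j$ gives $\la k\ra^3$.
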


Our second claim is about the case when all terms have low modulation.
\begin{theorem}\label{trilml} For any $\varepsilon>0$, there exists $C_\varepsilon>0$ such that the following holds true
\begin{equation}\label{eq transferred trilinear estimate}
| \int Q_{\prec 2k} u \cdot Q_{\prec 2k} v \cdot \overline{ Q_{\prec 2k} w} \ dx dt| 
\leq C_\varepsilon 2^{-k\varepsilon} \|u\|_{V_{ \Delta }^{ 2 }} \|v\|_{V_{ \Delta }^{ 2 }} \|w\|_{V_{ \Delta }^{ 2 }}.
\end{equation}
\end{theorem}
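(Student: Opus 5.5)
The plan is to reduce \eqref{eq transferred trilinear estimate} to a trilinear restriction estimate for free waves, in the form supplied by the companion paper \cite{BeCh} (Theorem \ref{ltri1}), and then transfer it to $V^2_\Delta$ by the method of \cite{Be-rs}.

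\textbf{Step 1: localization to transversal pieces.} By the resonance analysis around \eqref{Ileq2k}, nonzero contributions require $|\angle(\xi_1,\xi_2)-\frac{\pi}{2}|\ll 2^k$. We therefore decompose $u$, $v$, $w$ on the Fourier side into cubes: cubes of size $2^{2k}$ for $u$, and cubes or angular sectors of size $2^k$ for $v$ and $w$. For each admissible triple of pieces:
\begin{enumerate}[label=(\alph*)]
\item the support of $w$ is determined by those of $u$ and $v$, up to finitely many neighbors;
\item the modulation constraint forces near-orthogonality of the frequencies of the $u$ and $v$ pieces;
\item by \eqref{ndegg}, the transversality factor $|N\wedge N\wedge N|$ is $\approx 2^k$.
\end{enumerate}
We then apply parabolic rescaling in the $v$ frequency variable to bring the transversality to $\approx 1$. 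After rescaling, the $u$ piece sits at $|\xi|\approx 2^{-k}$, where the curvature degenerates. The required free-wave estimate is
\[
\norm{\mathcal{E}f_1\,\mathcal{E}f_2\,\overline{\mathcal{E}f_3}}_{L^p}\les 2^{-ck}\norm{f_1}_{L^2}\norm{f_2}_{L^2}\norm{f_3}_{L^2}
\]
for some $p<1$, with a loss that is at most a small power of $2^{-k}$ after undoing the scaling. This is exactly what Theorem \ref{ltri1} supplies, and I take it as given.

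\textbf{Step 2: transference to $V^2_\Delta$.} Transferring the $L^p$ bound ($p<1$) to $U^2_\Delta$ atoms is direct on each time subinterval. To reach $V^2_\Delta$ I follow the technique of \cite{Be-rs}, which exploits the low modulation $Q_{\prec 2k}$: on time intervals $I\in\mathcal{I}_{-2k}$ (length $2^{-2k}$), a function of modulation $\prec 2k$ is essentially a free wave.
\begin{enumerate}[label=(\alph*)]
\item Write $Q_{\prec 2k}u=\sum_I \tilde\chi_I\, e^{-it\Delta}u_I$ with $u_I=e^{it\Delta}u(c(I))$, plus an error controlled through \eqref{V2L2} and \eqref{eq:x-v2}.
\item On each $I$ the trilinear $L^p$ bound applies to the free pieces.
\item Interpolate the $L^p$ bound ($p<1$) against $L^\infty$ (from Bernstein) to get an $L^1$ bound on $I$ that gains a factor $2^{-\delta\cdot(\text{scale})}$ whenever the waves are localized to physical-space tubes far from $I$. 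This is what allows summation over $I$ using only the square-summability $\sum_I \norm{u(c(I))-u(c(I'))}^2$ provided by $V^2$, in place of the $\ell^1$ summability that $U^2$ would give.
\end{enumerate}
Concretely, I would decompose at dyadic time scales $2^{-2k}\le |I|\le 2^m$. At each scale, apply the $L^p$ estimate to the increments of the $V^2$ function, sum over $m$ using the decay from $p<1$, and combine with Lemma \ref{l2V2} to handle the frequency-cube decompositions. Lemma \ref{l2V2} costs $\la k\ra^2$ factors, which are absorbed into $2^{-k\varepsilon}$.

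\textbf{Step 3: summation over cubes.} Sum the estimates over admissible triples of cubes using Cauchy--Schwarz and the square-summability in Lemma \ref{l2V2} (with $U^q$, $q>2$, close to 2, interpolated against $U^2$ bounds). The losses $\la k\ra^{C}$ and the small power from the degenerate-curvature estimate together yield $C_\varepsilon 2^{-k\varepsilon}$.

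\textbf{Main obstacle.} The hardest step is Step 2: obtaining a lossless (up to $2^{-k\varepsilon}$) transfer from free waves to $V^2_\Delta$ rather than $U^2_\Delta$. Naive interpolation via \eqref{V2U2q} loses logarithms at every level and does not close. I would first try to reproduce the \cite{Be-rs} argument: use $p<1$ to get decay in the number of time subdivisions, so that the geometric series over scales converges with $V^2$ increments alone.
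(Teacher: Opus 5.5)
The gap is in Step 2, which carries essentially all of the content of the theorem, and it starts with how you record the free-wave input. Theorem \ref{ltri1} is not an $L^p$ estimate. It bounds the quasi-norm $\ell^p_{\mathcal P}L^1$, where $\mathcal P$ consists of parallelepipeds of dimensions $2^{-k}\times 2^{-k}\times 2^{-2k}\times 2^{-2k}$. The product is measured in $L^1$ on each $P$, and only the sum over $P$ is improved to $\ell^p$, $p<1$. The paper notes that an $L^p$ formulation would cost additional powers of $2^{-k}$ in this degenerate geometry.

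More importantly, a plain $L^p$ bound gives Step 2 nothing to work with:
\begin{itemize}
\item For disjoint time intervals, $\|\sum_I \1_I F_I\|_{L^p}^p=\sum_I\|\1_I F_I\|_{L^p}^p$. So $p<1$ produces no decay in the number of time subdivisions; it penalizes them.
\item The same identity shows that your ``direct'' transfer to a $U^2_\Delta$ atom $\sum_I \1_I e^{-it\Delta}f_I$ only yields $\sum_I\|f_I\|_{L^2}^p$, which is not controlled by $\sum_I\|f_I\|_{L^2}^2=1$. This is why the paper needs the Rademacher/Khintchine argument of \cite{Ca-note}.
\item The interpolation $\|F\|_{L^1}\le\|F\|_{L^p}^p\|F\|_{L^\infty}^{1-p}$ is pointwise in $F$. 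It therefore applies only to products for which the $L^p$ bound is already known, i.e.\ to $U^2_\Delta$ inputs.
\end{itemize}
What you actually need is an $L^1$ bound for inputs in $U^a_\Delta$ for some $a>2$. With such a bound, even with a polynomial loss in $2^{-k}$, the logarithmic losses in \eqref{V2U2q} would be harmless; indeed $V^2_\Delta\subset U^a_\Delta$ finishes directly. So logarithms are not the obstruction. No such bound is available from the linear and bilinear theory in this low-modulation regime, and neither your items (a)--(c) nor the dyadic time-scale scheme produces one.

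The missing idea is to pair the $\ell^p_{\mathcal P}L^1$ bound with a purely local bound that holds in $U^a_\Delta$. Each $P$ spans a time interval of length $2^{-2k}$. At modulations $\prec 2k$, the linear estimate \eqref{sme} from \cite{Be-rs}, $\|\varphi_I Q_{\prec 2k}z\|_{X^{0,\frac12,1}}\lesssim\|z\|_{U^a_\Delta}$, shows that on a single $P$ each factor is an $X^{0,\frac12,1}\subset U^2_\Delta$ function controlled by its $U^a_\Delta$ norm. Inserting this into the $U^2_\Delta$ version \eqref{me1} of \eqref{triro} gives the $\ell^\infty_{\mathcal P}L^1$ bound \eqref{me2} with $U^a_\Delta$ norms, for every $a<\infty$. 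Interpolating \eqref{me1} (with $\frac{14}{15}<p<1$) against \eqref{me2}, as in the proof of Theorem 1.7 of \cite{Ca}, yields $\ell^1_{\mathcal P}L^1=L^1$ with $U^{2/p}_\Delta$ norms, i.e.\ \eqref{mef}. Since $V^2_\Delta\subset U^{2/p}_\Delta$, only the sum over $(q,q')$ remains, and Lemma \ref{l2V2} handles it.

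This is exactly where the choice of $\mathcal P$ (time extent $2^{-2k}$, matching the modulation cutoff) and the $L^1$-on-each-$P$ structure are used. Your remark that low-modulation functions are essentially free waves on intervals of length $2^{-2k}$ is the right germ, but it has to be turned into \eqref{me2} and combined with the $\ell^p_{\mathcal P}L^1$ form of the free-wave estimate.

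A smaller point: do not split $u$ into $2^{2k}$-cubes and sum over triples in absolute value. About $2^{-2k}$ such cubes meet a fixed pair $(q,q')$, so Cauchy--Schwarz over triples costs a factor $2^{-k}$ unless each piece gains correspondingly. The paper keeps $u$ whole, restricted to the slab $|\xi\cdot e_3|\ll 2^{2k}$ determined by $q$, and sums only over pairs with $d(q,q')\approx 2^k$.
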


Lemma \ref{l:trihm} is more ``classical'' in that the existing techniques can be deployed given that one factor has high modulation. This does not mean that the argument is trivial, as we discovered that \eqref{trihml} would come with a worse factor if we had relied solely on classical methods. This is the reason we have the additional Lemma \ref{prop:ref-strichartz-2} below. 

On the other hand Theorem \ref{trilml} is completely new in the literature and it provides the effective transference of the trilinear estimate for free solutions to our resolution spaces. Note that Theorem \ref{trilml} implies Theorem \ref{thm trilinear transference} as a special case when $2^k \approx 1$. The use of $\prec$ here versus $\les$ in  Theorem \ref{thm trilinear transference} is not important; we do so in order  to extract the correct transversality assumptions \eqref{Nres}, which are already assumed to be true in Theorem \ref{thm trilinear transference}.

\begin{proof}[Proof of \eqref{trihmh}]
   We use \eqref{V2bil} and \eqref{UVX} to obtain:
  \begin{align*}
      \lVert u v \overline{Q_{ \succeq 2k } w} \rVert _{ L^{ 1 } } 
    &\lesssim \lVert u v \rVert _{ L^{ 2 } } \lVert Q_{ \succeq 2k
    }w \rVert _{ L^{ 2 } } 
    \lesssim 2^{ k } \langle k \rangle^2 \lVert u \rVert _{ V_{ \Delta }^{ 2 } } 
    \lVert v \rVert _{ V_{ \Delta }^{ 2 } } 
    \cdot 2^{ -k } \lVert w \rVert_{ \dot X^{ 0,\frac{1}{2},\infty } }\\
    &\lesssim \langle k \rangle^2
    \lVert u \rVert _{ V_{ \Delta }^{ 2 } }
    \lVert v \rVert _{ V_{ \Delta }^{ 2 } }
    \lVert w \rVert _{ V_{ \Delta }^{ 2 } }.
  \end{align*}

\end{proof}

\begin{proof}[Proof of \eqref{trihml}] Here the argument is a more complex.
There is an easy part, when the low frequency has a higher modulation localization. Indeed, using Bernstein, \eqref{V2Str} and \eqref{UVX}, we estimate as follows:
  \begin{equation} \label{easy}
  \begin{split}
    \left\lvert \int_{ \R^{ 1+3 } } Q_{ \succeq k }u v \bar w
      dx dt \right\rvert 
    &\lesssim \lVert  Q_{ \succeq k } u  \rVert_{ L^{ 2 }_{ t,x } } 
    \lVert \widetilde P_{ k } (v \bar w) \rVert_{ L_{ t,x }^{ 2 }  }
    \lesssim 2^{ -\frac{k}{2}  } \lVert u \rVert _{ \dot X^{ 0,\frac{1}{2}
      ,\infty } } 
      \cdot 2^{ \frac{k}{2}  } \lVert v \bar w \rVert _{ L_{ t }^{ 2 } L_{ x }^{
      \frac{3}{2} }  }\\
    &\lesssim \lVert u \rVert _{ V_{ \Delta }^{ 2 } } 
      \lVert v \rVert _{ L_{ t }^{ 4 } L_{ x }^{ 3 }  } 
      \lVert w \rVert _{ L_{ t }^{ 4 } L_{ x }^{ 3 }  } 
      \lesssim \lVert u \rVert_{ V_{ \Delta }^{ 2 } }
      \lVert v \rVert_{ V_{ \Delta }^{ 2 } }
      \lVert w \rVert_{ V_{ \Delta }^{ 2 } }.
      \end{split}
  \end{equation}
  For modulations $2k \preceq j \preceq k$ we change the strategy. Fix such a $j$. We notice that
  \[
  \int_{ \R^{ 1+3 } } Q_{ j }u \cdot v \bar w  d xd t 
  = \sum_{q,q' \in \mathcal{C}_k} \int Q_{ j }u \cdot v_q \bar w_{q'}  d xd t,
  \]
  where the sum is restricted to the pairs with $d(q,q') \les 2^{k}$; here we use the notation $v_q = \chi_q(D) v$ for the component of $v$ with the property that $\hat v$ is localized in $q$, and similarly for $w_{q'}$. Then we fix such a pair $(q,q')$. We note that also have
  \[
  \int Q_{ j }u \cdot v_q \bar w_{q'}  d xd t = \sum_{I,I' \in \mathcal{I}_{j}} \int Q_{ j }u \cdot v_{q,I} \bar w_{q',I'},
  \]
where $ v_{q,I} = \chi_I(D_t) v_{q}$ is the component of $v_q$ with the property that $\hat v_q(\xi,\tau)$ has $\tau$ localization in $I$. Since $Q_j u$ is localized in the region $|\tau| \les 2^j$, 
it suffices to restrict the summation over  pairs of intervals with the property $d(I,I') \les 2^{j}$. We remark that, for a free solution
$(e^{-it\Delta} f)_q=e^{-it\Delta} f_q$, while the time frequency localization translates into $(e^{-it\Delta} f_q)_I= e^{-it\Delta} \chi_I(\xi^2) f_{q}$; essentially this produces a localization of $\xi$ in $S$, the strip characterized by $\xi^2 \in I$. This is a subset of the parallelepiped of size $C2^j \times C2^k \times C2^k$, where the size of $C2^j$ comes in the direction of $\xi_0$, where $\xi_0$ is the center of $q$.

\begin{lemma}\label{prop:ref-strichartz-2} 
i) Assume that $j \leq k \leq 0$ and $S$ is parallelepiped centered at $\xi_0$, $|\xi_0| \approx 1$ and of dimensions $ \approx 2^{ j }\times 2^{ k } \times 2^{ k }$. If $\hat f$ is supported in $S$, then
  \begin{align}\label{eq:slab-strichartz-2}
    \|e^{-it\Delta}f\|_{L_t^pL_x^q}
    \lesssim
    2^{\frac{2k+j}2 \left(1-\frac{2}{q}\right) - \frac{2k}{p}}
    \|f\|_{L_x^2}
  \end{align}
  whenever $2<p\leq\infty, \quad 2\leq q\leq\infty,\quad    \frac1p+\frac1q\leq\frac12$.
  In particular, $\|e^{-it\Delta}f\|_{L_{t,x}^4} \lesssim 2^{\frac{j}4}\|f\|_{L_x^2}$.

ii) Assume $u \in U^4_\Delta$ has similar localization properties. Then
\begin{equation}
\| u \|_{L^4_{t,x}} \les 2^\frac{j}4 \|u\|_{U^4_\Delta}. 
\end{equation}

\end{lemma}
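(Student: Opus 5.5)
The plan for part i) is the standard $TT^*$/Keel--Tao machinery applied to an anisotropic dispersive estimate, followed by interpolation with energy conservation. Part ii) then follows by transferring the $L^4$ bound to $U^4_\Delta$ atoms.

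\textbf{Dispersive bound.} Let $\psi$ be a smooth bump adapted to a slight enlargement of $S$, equal to $1$ on $S$, so that $e^{-it\Delta}f = K_t \ast f$ with $\hat K_t(\xi)=e^{it\xi^2}\psi(\xi)$. Rotate coordinates so that $\xi_0$ points along the first axis. Up to Schwartz tails, $K_t$ factors as a product of three one-dimensional oscillatory integrals:
\begin{itemize}
\item one over an interval of length $2^j$ centered at $|\xi_0|\approx 1$;
\item two over intervals of length $2^k$ centered at $0$.
\end{itemize}
Each one-dimensional factor is bounded by the minimum of the length of its interval and $|t|^{-1/2}$, by the trivial bound and stationary phase/van der Corput respectively. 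Using only the trivial bound $2^j$ for the first factor gives
\[
\|K_t\|_{L^\infty_x} \les 2^j \min\bigl(2^{2k}, |t|^{-1}\bigr) = 2^{j+2k}\min\bigl(1,(2^{2k}|t|)^{-1}\bigr).
\]
This is a dispersive estimate with decay exponent $\sigma=1$, valid on time scales $\gtrsim 2^{-2k}$. Together with the energy bound $\|e^{-it\Delta}f\|_{L^2}\le\|f\|_{L^2}$, it sets up the abstract Keel--Tao theorem. Since the minimum in the kernel bound is bounded above by $|t|^{-1}$, one can apply Keel--Tao with decay $|t|^{-1}$ and constant $2^j$. For $\sigma=1$ the admissible line is $\frac1p+\frac1q=\frac12$ with $p>2$. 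Here $p=4$, $q=4$ is a non-endpoint pair, so the endpoint issue never arises.

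\textbf{Exponents.} The Keel--Tao constant scales as (dispersive constant)$^{\frac12-\frac1q}$. This gives the factor $2^{\frac{j}{2}(1-\frac2q)}$ on the line $\frac1p+\frac1q=\frac12$. This matches \eqref{eq:slab-strichartz-2} there: with $\frac2p = 1-\frac2q$ the exponent $\frac{2k+j}{2}(1-\frac2q)-\frac{2k}{p}$ reduces to $\frac{j}{2}(1-\frac2q)$. In particular $p=q=4$ gives the bound $2^{j/4}$.

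For the interior of the region I would add the $q=\infty$, $p>2$ estimates, obtained by a direct $TT^*$ argument. Using $\|K_t\|_{L^\infty_x}\les 2^j\min(2^{2k},|t|^{-1})$ and Young's inequality in time (since $p/2>1$), one computes
\[
\|\min(2^{2k},|t|^{-1})\|_{L^{p/2}_t}\approx 2^{2k(1-\frac2p)},
\]
so $\|TT^*\|_{L^{p'}_tL^1_x\to L^p_tL^\infty_x}\les 2^{j+2k-\frac{4k}{p}}$. Taking square roots gives exactly the claimed exponent at $q=\infty$. Complex interpolation between these estimates, the admissible line, and the energy estimate then covers all $2<p\le\infty$, $2\le q\le\infty$ with $\frac1p+\frac1q\le\frac12$. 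The claimed exponent is affine in $(\frac1p,\frac1q)$, so it interpolates consistently.

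\textbf{Transfer to $U^4_\Delta$ (part ii).} Here the localization hypothesis means that $e^{it\Delta}u(t)$ has Fourier support in $S$. Write $u=\sum\lambda_n a_n$ with $U^4_\Delta$ atoms
\[
a=\sum_m \mathds 1_{[t_{m-1},t_m)}\, e^{-it\Delta}\phi_m, \qquad \sum_m\|\phi_m\|_{L^2}^4=1.
\]
Apply the Fourier projection $P_S$ (bounded on $L^2$, commuting with the flow) to each atom. This keeps the decomposition valid and lets us assume each $\hat\phi_m$ is supported in $S$. Because the time intervals are disjoint,
\[
\|a\|_{L^4_{t,x}}^4=\sum_m\|\mathds 1_{[t_{m-1},t_m)}e^{-it\Delta}\phi_m\|_{L^4}^4\les 2^{j}\sum_m\|\phi_m\|_{L^2}^4=2^j.
\]
Summing over atoms with the triangle inequality gives $\|u\|_{L^4}\les 2^{j/4}\|u\|_{U^4_\Delta}$.

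\textbf{Main obstacle.} The main care is in the dispersive kernel bound. One must verify that the factorization into one-dimensional oscillatory integrals is legitimate, i.e.\ that the rotated parallelepiped gives a tensor-product bump, with Schwartz tails harmless. The $|t|^{-1}$ decay should come from the two short transverse directions, not from the long $\xi_0$ direction.
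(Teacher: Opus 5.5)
Your proposal is correct and follows essentially the same route as the paper. Both use the same anisotropic kernel bound $|K(t,x)|\lesssim 2^{2k+j}(1+2^{2k}|t|)^{-1}$ (trivial estimate in the $2^j$ direction, $|t|^{-1}$ decay from the two $2^k$ directions), a $TT^*$ argument with Hardy--Littlewood--Sobolev on the line $\frac1p+\frac1q=\frac12$, and the disjoint-interval computation on $U^4_\Delta$ atoms for ii). The differences are cosmetic. You obtain the kernel bound from a tensor-product factorization and one-dimensional van der Corput, where the paper rescales and uses two-dimensional stationary phase. You also reach interior pairs by interpolating with the $q=\infty$ estimates, which is legitimate since the exponent is affine and Hölder suffices for a fixed function, whereas the paper applies Young's inequality in time directly.
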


Assuming the above lemma, we conclude  the proof of \eqref{trihml}. For fixed pair $(q,q')$ and $I,I'$ we have
\begin{align*}
    \left\lvert \int_{ \R^{ 1+3 } } Q_{ j }u \cdot v_{q,I} \bar w_{q',I'}  d xd t \right\rvert
    &\lesssim \lVert Q_{ j }u  \rVert_{ L_{t,x}^{2 }}
      \lVert v_{q,I} \rVert_{ L_{ t,x }^{ 4 }  } \lVert w_{q',I'} \rVert_{ L_{ t,x } ^{ 4 }(T) }\\
    & \lesssim 2^{-\frac{j}2} \|u\|_{V^2_\Delta} 2^\frac{j}4 \|v_{q,I}\|_{U^4_\Delta} 2^\frac{j}4 \|w_{q',I'}\|_{U^4_\Delta} \\
& \lesssim  \|u\|_{V^2_\Delta} \|v_{q,I}\|_{U^4_\Delta} \|w_{q',I'}\|_{U^4_\Delta}. 
  \end{align*} 
Then we estimate the global term using Lemma \eqref{l2V2} as follows
\begin{align*}
    \left\lvert \int_{ \R^{ 1+3 } } Q_{ j }u \cdot v \bar w  d xd t \right\rvert
    & \lesssim \|u\|_{V^2_\Delta} \left(\sum_q \sum_I \|v_{q,I}\|^2_{U^4_\Delta} \right)^\frac12 \left( \sum_{q'} \sum_{I'} \|w_{q',I'}\|^2_{U^4_\Delta} \right)^\frac12 \\
    & \les \|u\|_{V^2_\Delta} \la j \ra^2 \|v\|_{V^2_\Delta} \|w\|_{V^2_\Delta}.
  \end{align*} 
Strictly speaking, the result in Lemma \eqref{l2V2}, does not apply verbatim above, and requires some slight  adjustments. First, the localization of $\chi_q(D_x) \chi_I(D_t)$ is not entirely in spatial variables, as required by 
Lemma \eqref{l2V2}. However, recalling that we work with $Q_{\preceq 2k} v$ and $Q_{\preceq 2k} w$, then, just as we did for free solutions, this translates into a parallelepiped localization solely on the spatial variables. Second, the statement Lemma \eqref{l2V2} is made in terms of isotropic objects (cubes to be more precise), while here the localization is made on parallelepipeds. This is again not a problem, since the argument Lemma \eqref{l2V2} provides the argument in this setup as well, and we made sure we picked up the correct factor $\la j \ra$ since $2^j$ is the smallest scale that appears in the parallelepiped.

The summation over the range $2k \preceq j \preceq k$ of the above estimate, combined with \eqref{easy}, gives \eqref{trihml}. 

Thus we are left with establishing the claim in Lemma \ref{prop:ref-strichartz-2}, which we prove below.

\end{proof}

\begin{proof}[Proof of Lemma \ref{prop:ref-strichartz-2}]
  The case $p=\infty$ follows from the anisotropic Bernstein bound
  \[
    \|e^{-it\Delta}f\|_{L_t^\infty L_x^q}
    \lesssim
    |S|^{\frac12-\frac1q}\|f\|_{L_x^2}
    \lesssim
    2^{2k(1-\frac2q)}\|f\|_{L_x^2}.
  \]
  We may therefore assume that $p<\infty$. For simplicity, we assume that $S$
  is centered at $e_1$. We let $Af= e^{-it\Delta}\chi (D) f$,
  where $\chi$ is a smooth cutoff to a slightly enlarged copy of $S$.
  The operator $AA^*$ is a spacetime convolution operator with kernel
  \[
    K_k(t,x)
    =
    \int_{\mathbb R^3}
      e^{i(x\cdot\xi+t|\xi|^2)}
      \chi(\xi)^2\,d\xi.
  \]
  By the $TT^*$ argument, it suffices to prove
  \[
    \|AA^*\|_{L_t^{p'}L_x^{q'}\to L_t^pL_x^q}
    \lesssim 2^{(2k+j) \left(1-\frac{2}{q}\right) - \frac{4k}{p}}.
  \]
  For this we use the pointwise kernel bound
  \begin{equation}\label{eq:anisotropic-kernel}
    |K_k(t,x)|
    \lesssim
    2^{2k+j}(1+2^{2k}|t|)^{-1}.
  \end{equation}
  Interpolating the resulting $L_x^1\to L_x^\infty$ estimate with the
  $L_x^2\to L_x^2$ bound and then using Young's inequality in $x$, we
  obtain
  \[
    \|K_k(t,\cdot)*_x\phi\|_{L_x^q}
    \lesssim    2^{(2k+j)(1-\frac2q)}
    (1+2^{2k}|t|)^{-(1-\frac2q)}
    \|\phi\|_{L_x^{q'}}.
  \]
  If $\frac1p+\frac1q=\frac12$, then
  $1-\frac2q=\frac2p$, and the Hardy--Littlewood--Sobolev inequality in
  the time variable gives
  \[
    \|AA^*F\|_{L_t^pL_x^q}
    \lesssim
    2^{(2k+j)\left(1-\frac{2}{q}\right)} (2^{-2k})^{\frac2p}
    \|F\|_{L_t^{p'}L_x^{q'}}.
  \]
  If $\frac1p+\frac1q<\frac12$, the same conclusion follows from
  Young's inequality in the time variable, since
  $1-\frac2q>\frac2p$. This proves the required $AA^*$ bound.

  It remains to prove \eqref{eq:anisotropic-kernel}. Write
  \[
    \xi_1=1+2^{2k}\rho,
    \qquad
    \xi'=2^k\eta,
    \qquad
    \xi'=(\xi_2,\xi_3).
  \]
  After removing a phase of absolute value one, we obtain
  \[
    K_k(t,x)
    =
    2^{4k}
    \int_{\mathbb R}
      e^{i(2^{2k}(x_1+2t)\rho+2^{4k}t\rho^2)}
      \int_{\mathbb R^2}
        e^{i(2^kx'\cdot\eta+2^{2k}t|\eta|^2)}
        a(\rho,\eta)\,d\eta\,d\rho,
  \]
  where $a$ is smooth, supported in a fixed bounded set, and has uniform
  symbol bounds. 
  By stationary phase in the variable $ \eta $,
  \[
    \left|
      \int_{\mathbb R^2}
        e^{i(y\cdot\eta+s|\eta|^2)}
        a(\rho,\eta)\,d\eta
    \right|
    \lesssim
    (1+|s|)^{-1},
  \]
  uniformly in $\rho$ and $y$. Taking $s=2^{2k}t$ and
  $y=2^kx'$, and estimating the remaining $\rho$-integral directly,
  yields \eqref{eq:anisotropic-kernel}.

  The transference to $ U_{ \Delta }^{ 4 } $ follows immediately for atoms: if
  $ a(t) = \sum_{ J }  \1_{ J }e^{ -it\Delta } \phi_{ J }   $ is a $ U_{ \Delta
  }^{ 4 } $-atom,
  \begin{align*}
    \lVert a \rVert _{ L_{ t,x }^{ 4 }  } ^{ 4 }
    &\lesssim \sum_{ J } \int_{ J }\lVert e^{ -it\Delta } \phi_{ J }  \rVert _{
    L_{ t,x }^{ 4 } }^{ 4 }
    \lesssim 2^{ j } \sum_{ J } \lVert \phi_{ J }  \rVert_{ L_{ t,x }^{ 4 }
    }^{ 4 },
  \end{align*} 
  since the time intervals $ J $ are disjoint.
  The estimate for generic $ U_{ \Delta }^{ 4 } $ functions then follows by
  density.

\end{proof}

\section{The trilinear restriction estimate and the proof of Theorem \ref{trilml}}

In this section, we prove the estimate in Theorem \ref{trilml} by transferring the trilinear restriction estimate in $\R^4$ to $V^2_\Delta$. We recall the estimate here for convenience:
\[
| \int Q_{\prec 2k} u \cdot Q_{\prec 2k} v \cdot \overline{ Q_{\prec 2k} w} \ dx dt| 
\leq C_\varepsilon 2^{-k\varepsilon}\|u\|_{V^2_\Delta} \|v\|_{V^2_\Delta} \|w\|_{V^2_\Delta},
\]
where  $u$ is localized to frequency $2^k$, $v$ and $w$ are localized to frequency $1$, and $k < 0$.

The transference argument for low modulation waves that we use is based on ideas introduced by the first author in \cite{Be-rs}. The idea is to interpolate between the transferred estimate for functions in $U^2_\Delta$, which follows from a fairly standard argument, and $U^a_\Delta$, which follows from a new linear estimate that was proved in \cite{Be-rs}. A key point is that the interpolation is done for certain $\ell^p L^q$ quasi-norms that reflect the geometry of the wave propagation, instead of standard $L^p$ norms.

Thus, in order for the transference to work we need to have the corresponding trilinear restriction estimate in $\ell^p L^q$ with the correct localization hypotheses. This is proved by the first and third authors in a companion paper \cite{BeCh} in a more general geometric set-up that allows for degenerate curvatures in the underlying surfaces. This extra flexibility is crucial for the arguments in this section when $2^k\ll 1$, which will be described in more detail later.

We organize the proof of Theorem \ref{trilml} into two subsections corresponding to these two main steps as the arguments involved in each step are quite different in nature from each other (and from the rest of this paper). First, we state the precise free wave estimate that we need and then transfer the low modulation components to $V^2_\Delta$. Second, we prove this free wave estimate by transforming the estimate into a set-up in which we can apply the more general result from \cite{BeCh} to conclude.

\subsection{Transference argument}

In order to state the free wave estimate that we wish to transfer, we first need some technical reductions. Given the localization of $u$ at frequency $2^{k}$, it suffices to further localize the terms $Q_{\prec 2k} v$ and $Q_{\prec 2k} w$ in cubes of size $\approx 2^{k}$. Precisely we have
\[
\int Q_{\prec 2k} u \cdot Q_{\prec 2k} v \cdot \overline{ Q_{\prec 2k} w} \ dx dt = 
\sum_{q,q' \in \mathcal{C}_{k-5}} \int Q_{\prec 2k} u \cdot Q_{\prec 2k} v_{q} \cdot \overline{ Q_{\prec 2k} w_{q'}} \ dx dt,
\]
and we can further restrict the sum to cubes $q,q' \in \mathcal{C}_{k-5}$ that satisfy the condition $d(q,q') \approx 2^{k}$ - this is to ensure that the term $Q_{\prec 2k} v_{q} \cdot \overline{ Q_{\prec 2k} w_{q'}}$ is localized at frequency $\approx 2^k$.

Next, we fix a pair of cubes $q,q'$ as above.  Without restricting the generality of the argument, we assume that the center of $q$ satisfies $c(q)=(0,0,1)= e_3 $. Thus, if $\eta \in q$, then $|\eta \cdot e_3 - |\eta|| \leq 2^{k-3}$.

Given the modulation localization, the computation for \eqref{Ileq2k} shows that we can further localize the low frequency term $Q_{\prec 2k} u$ in a region where $|\xi \cdot e_3| \ll 2^{2k}$ or, equivalently $|\angle (\xi,e_3)-\frac{\pi}2| \ll 2^k$. As a consequence we obtain $|(c(q')-c(q)) \wedge e_3| \approx 2^{k}$, which implies that the separation between $q$ and $q'$ (i.e. ($d(q,q') \approx 2^{k}$) comes in directions transversal to $e_3$.  
 
For the rest of this section, we implicitly assume in what follows that in the integral 
\[
I_{q,q'} = \int Q_{\prec 2k} u \cdot Q_{\prec 2k} v_{q} \cdot \overline{ Q_{\prec 2k} w_{q'}} \ dx dt,
\]
the terms are localized as above.

Based on this setup we define $\mathcal{P}$ to be the set of spacetime parallelepipeds in $\R^4$ with sides parallel to the coordinate axes $e_1,e_2,e_3$ and $e_4$ (the last being the time direction), and whose sides have length $2^{-2k}$ in the directions of $e_3$ and $e_4$, and $2^{-k}$ in the directions of $e_1$ and $e_2$. Explicitly, a parallelepiped $P \in \mathcal{P}$ is a spacetime set of the form
\[
P= P_z= \{ x \in \R^4:  x_i \in [2^{-k} z_i , 2^{-k}(z_i+1)], i=1,2;
 x_i \in [2^{-2k} z_i , 2^{-2k}(z_i+1)], i=3,4 \},
\]
where $z \in \Z^4$ and we write $x=(x_1,x_2,x_3,x_4)$. 

To justify this construction, we note that a parallelepiped $P \in \mathcal{P}$ has the property that it is the maximal (in size) geometric structure which accommodates wave packets associated to the free waves in the expression $I_{q,q'}$ before dispersion occurs. Indeed, when constructing wave packets for the Schr\"odinger equation, the localization in frequency at scale $\Delta \xi \approx 2^{k}$ (displayed by all terms in $I_{q,q'}$) calls for a localization at spatial scale $\Delta x \approx 2^{-k}$ and time scale $2^{-2k}$. The wave packets associated to $Q_{\prec 2k} u$ are essentially vertical as their velocity is close to $e_4=(0,0,0,1)$, and therefore stay coherent up to scale $2^{-2k}$ in the $x_4$ direction. On the other hand, the wave packets associated to $Q_{\prec 2k} v_{q}$ and $Q_{\prec 2k} w_{q'}$ essentially travel in the direction $(0,0,-2,1)$, and therefore stay coherent up to scale $2^{-2k}$ in the $x_3$ direction. This determines the geometry of the parallelepipeds in $\mathcal{P}$.

Using this notation, we introduce the norm or quasi-norm
\[
\|F\|_{\ell^p_{\mathcal{P}} L^1}: = \left( \sum_{P \in \mathcal{P}} \| F \|_{L^1(P)}^p\right)^\frac1p, 
\]
for $F: \R^4 \rightarrow \C$. The main goal in this section is then to transfer the following estimate for free solutions.
\begin{theorem} \label{ltri1}
Let $p>\frac{14}{15}$ and assume $f,g,h$ are frequency-localized as above. Then for any $\epsilon>0$ there exists $C_\epsilon>0$ such that
\begin{equation} \label{triro}
\|e^{-it\Delta} f \cdot e^{-it\Delta} g \cdot \overline{e^{-it\Delta} h} \|_{\ell^p_{\mathcal{P}} L^1} \leq C_\epsilon 2^{-k\epsilon} \|f\|_{L^2} \|g\|_{L^2} \|h\|_{L^2}.
\end{equation}
\end{theorem}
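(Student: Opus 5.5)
The plan is to reduce \eqref{triro}, by parabolic rescaling, to a configuration with transversality $\approx 1$, and then invoke the $\ell^p L^1$ trilinear restriction estimate for hypersurfaces with degenerate curvature from the companion paper \cite{BeCh}.

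\textbf{Step 1: rescaling.} Translate frequencies by $\xi \mapsto \xi - e_3$ (a Galilean transformation) so that $\hat g$ and $\hat h$ sit in $2^k$-cubes near the origin. The term $\hat f$ then lives near the translate $-e_3$, in the slab $|\xi_3+1|\ll 2^{2k}$ up to that shift, of dimensions $2^k\times 2^k\times 2^{2k}$. Galilean transformations preserve $L^2$ norms and modulus of products. They map $\mathcal P$ to a family of parallelepipeds sheared in the $e_3$-direction; at this scale the shear is comparable to the box size, so after a harmless covering it is equivalent to $\mathcal P$ up to finitely many overlaps.

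Next rescale $\xi \mapsto 2^{-k}\xi$, together with $x\mapsto 2^k x$ and $t \mapsto 2^{2k} t$. The cubes of $g,h$ become unit cubes separated by distance $\approx 1$ in directions transversal to $e_3$. The $f$-support becomes a region near $-2^{-k}e_3$ of size $1\times1\times 2^{k}$ in $\xi$.

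In the new variables, each $P\in\mathcal P$ becomes a box of size $1\times 1\times 2^{-k}\times 1$ in $(x_1,x_2,x_3,t)$. However, the $f$-waves now move with velocity $\approx 2^{1-k}e_3$. It is therefore natural to change coordinates $x_3\mapsto x_3 + 2^{1-k}t$, equivalently replace the surface by the graph of $\xi_3$-linearized phase. Then $f$'s surface piece $\{\tau = |\xi'|^2 + 2^{1-k}\xi_3 + \xi_3^2\}$, with $\xi_3$ localized to width $2^k$, becomes after reparametrisation a hypersurface with curvature $\approx1$ in $\xi'$ and essentially flat (curvature degenerate) in the remaining direction.

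Track the normalizations: the $L^2$ norms scale consistently, and the $L^1(P)$ norms pick up the Jacobian factors. Since $\ell^p$ with $p<1$ is not scale invariant, the number of boxes and the Jacobians must be checked to give net factor $O(1)$. I expect the powers to cancel exactly, because $2^{-k}$-dilations of $x_3$ only regroup unit boxes into the enlarged ones.

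\textbf{Step 2: verifying hypotheses.} After Step 1, the three surfaces are $\Sigma_g,\Sigma_h$ (unit pieces of the paraboloid) and $\Sigma_f$ (partially flat). By \eqref{ndegg}, with the $2^k$ gain absorbed by the rescaling, their normals satisfy $|N_1\wedge N_2\wedge N_3|\gtrsim 1$. The degenerate-curvature conditions required in \cite{BeCh} must be checked: one nonvanishing principal curvature pair in the directions where $f$ has extent $1$, and a flat direction of length $2^k$. The estimate in \cite{BeCh} should give the $\ell^p L^1$ bound over unit-scale (post-rescaling) boxes with a $2^{-\epsilon k}$ loss, which arises from the induction-on-scales/polynomial partitioning as the scale ratio $2^{-k}$ enters. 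Undoing the rescaling then yields \eqref{triro}.

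\textbf{Main obstacle.} I expect the main difficulty to be matching the box geometry. \cite{BeCh} presumably measures $\ell^p$ over boxes adapted to the wave packets at the coarsest coherence scale. One must check that those boxes pull back exactly to $\mathcal P$: the dimensions are $2^{-2k}$ along $x_3,t$ and $2^{-k}$ along $x_1,x_2$. If the native boxes in \cite{BeCh} are smaller, a sum over sub-boxes in $\ell^p$ with $p<1$ goes the wrong way. In that case I would first try to enlarge them using the locally constant property of all three waves on $P$, which is valid by the coherence discussion preceding the theorem, at the cost of Schwartz tails controlled by summing translates.
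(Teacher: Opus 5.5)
Your overall route matches the paper's. You do a Galilean translation by $e_3$, then a parabolic rescaling that turns $\mathcal P$ into the boxes $\mathcal T$ of size $1\times1\times2^{-k}\times1$, and then apply the degenerate-curvature estimate of \cite{BeCh}. Its Corollary 1.5 is stated directly for the caps $\mathbb P\cap B_1(x_i)$ with $|x_3|\approx2^{-k}$ and for exactly these boxes, so your box-matching obstacle does not arise. Your worry also points the wrong way: for $p<1$ one has $(\sum_b a_b)^p\le\sum_b a_b^p$, so $\ell^pL^1$ over a finer partition dominates $\ell^pL^1$ over a coarser one, and smaller native boxes would be harmless.

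The shear $x_3\mapsto x_3+2^{1-k}t$, however, is wrong as written. It is a change of spacetime coordinates, so it acts on all three factors; it is the rescaled inverse of the Galilean boost you just made. It puts $f$ back at the vertex and $g,h$ at distance $\approx2^{-k}$ from it. There their unit normals make an angle $\approx2^k$, so $|N_1\wedge N_2\wedge N_3|\approx2^k$ again, contradicting Step 2. What is needed is to reparametrize $f$'s cap alone, as the graph $\xi_3=\varphi(\xi_1,\xi_2,\tau)=\pm\sqrt{\tau-\xi_1^2-\xi_2^2}$ over a unit region in $(\xi_1,\xi_2,\tau)$, since the graph $\tau=|\xi|^2$ has gradient $\approx2^{-k}$ there. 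Your curvature profile ("curvature $\approx1$ in $\xi'$") is an artifact of that inadmissible parametrization. The cap at distance $2^{-k}$ from the vertex has principal curvatures $\approx2^k,2^k,2^{3k}$, so all of them are degenerate.

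The genuine gap is the numerology: the powers do not cancel. With $\xi=e_3+2^k\zeta$ one has $\mathcal E\psi(x,t)=2^{3k}e^{i(x_3+t)}\mathcal E\psi_L(2^k(x+2te_3),2^{2k}t)$. The Jacobian is $2^{5k}$ and $\|\psi\|_{L^2}=2^{3k/2}\|\psi_L\|_{L^2}$. Hence \eqref{triro} is equivalent to
\[
\|\mathcal E\hat f_L\cdot\mathcal E\hat g_L\cdot\mathcal E\hat h_L\|_{\ell^p_{\mathcal T}L^1}\le C_\epsilon 2^{-k\epsilon}\,2^{\frac k2}\,\|\hat f_L\|_{L^2}\|\hat g_L\|_{L^2}\|\hat h_L\|_{L^2}.
\]
The factor $2^{k/2}\ll1$ is the rescaled form of the assertion that \eqref{triro} carries no $2^{-k/2}$ loss, even though the original transversality is only $\approx2^k$. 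An $O(1)$ bound in the rescaled frame pulls back to \eqref{triro} with an extra factor $2^{-k/2}$.

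The missing idea is where this gain comes from: the change of density in the reparametrization. Writing $\hat f_L\,d\xi=F\,d\xi_1\,d\xi_2\,d\tau$ with $F=\hat f_L(\xi_1,\xi_2,\varphi)\,\partial_\tau\varphi$ and $|\partial_\tau\varphi|\approx2^k$ gives $\|F\|_{L^2}\approx2^{k/2}\|\hat f_L\|_{L^2}$, while $\hat g_L,\hat h_L$ are untouched. The required bound then becomes $\|\tilde{\mathcal E}F\cdot\mathcal E\hat g_L\cdot\mathcal E\hat h_L\|_{\ell^p_{\mathcal T}L^1}\le C_\epsilon2^{-k\epsilon}\|F\|_{L^2}\|\hat g_L\|_{L^2}\|\hat h_L\|_{L^2}$, which is exactly what \cite{BeCh} supplies.
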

We state the estimate with the conjugate on the third term in order to match the statement of \eqref{eq transferred trilinear estimate}. However we note that, given the extra localizations in place, the conjugate has no effect on the estimate. Thus, for the rest of this section and the appendix, we will omit the conjugate from the estimates for clarity.

Before we put this result to work, let us unpack its statement. At the level of each parallelepiped $P$, we measure the interaction in $L^1$, which is the natural space for generic trilinear interactions - see \eqref{trint} and the comment right after. In particular, the curvature of the paraboloid does not provide any significant dispersion within the parallelepipeds and therefore we cannot do any better than the generic trilinear restriction estimate, i.e. an $L^1(P)$ estimate. Once we move outside the parallelepipeds, the improved $\ell^p$ summation reflects the fact that we are in $\R^4$ and have additional curvature which improves the estimate. We refer to this as an improved summability because if $p<1$ then this estimate is stronger than the generic global estimate in $l^1_{\mathcal{P}} L^1=L^1(\R^n)$.

We note that when $2^k\approx 1$, we could simply use Theorem \ref{TSS} instead, which is the more standard estimate in $L^p(\R^n)=\ell^p_{\mathcal{P}}L^p$ for the same range of $p$,
\begin{equation}
    \norm{e^{-it\Delta} f_1 \cdot e^{-it\Delta} f_2 \cdot e^{-it\Delta} f_3}_{L^p(\R^4)}\lesssim \norm{f_1}_{L^2} \norm{f_2}_{L^2} \norm{f_3}_{L^2}.
\end{equation}
However, when $2^k\ll 1$, we need the full power of Theorem \ref{ltri1} for the transference because of the degenerate geometry of this set-up. In fact, because the optimal estimate within the parallelepipeds is an $L^1$ estimate, as described above, using $L^p$ with $p<1$ would actually incur additional losses in $2^{-k}$ in this case.

Theorem \ref{ltri1} will be proved in the next subsection. Assuming this result for now, we proceed with the proof of Theorem \ref{trilml}. We use the transference argument from \cite{Be-rs} which interpolates estimates in $U^2_\Delta$ and $U^a_\Delta$ ($a>2$) in order to obtain a final estimate in $V^2_\Delta$. In particular, this argument uses the $\ell^p L^1$ structure in an important way, which is another reason why we need to write the trilinear estimate \eqref{triro} in this form.

\begin{proof}[Proof of Theorem \ref{trilml}]
First, a standard argument (following \cite[Theorem 7]{Ca-note}) shows that \eqref{triro} can be generalized to functions in $U^2_\Delta$, that is
\begin{equation} \label{me1}
\|u \cdot v_q \cdot \bar w_{q'} \|_{\ell^p_{\mathcal{P}} L^1} \leq C_\epsilon 2^{-k\epsilon} \|u\|_{U^2_\Delta} \|v_q\|_{U^2_\Delta} \|w_{q'}\|_{U^2_\Delta},
\end{equation}
where, again, we assume all the localization mentioned above to be in place. We provide a complete proof of this transference in the appendix.

The second step in the transference is the following estimate in $U^a_\Delta$
\begin{equation}  \label{me2}
\| Q_{\prec 2k} u \cdot Q_{\prec 2k} v_q \cdot \overline{Q_{\prec 2k} w_{q'}} \|_{l^\infty_{\mathcal{P}} L^1} \leq C_\epsilon 2^{-k\epsilon} \|u\|_{U^a_\Delta} \|v_q\|_{U^a_\Delta} \|w_{q'}\|_{U^a_\Delta}, 
\end{equation}
for any $2 \leq a < \infty$. To prove this we use the following linear estimate: if $I$ is a time interval of length $\approx 2^{-2k}$ and $\varphi_I$ is a bump function adapted to $I$, then
\begin{equation} \label{sme}
\| \varphi_I Q_{\prec 2k} z \|_{X^{0,\frac12,1}} \les \| z \|_{U^a_\Delta},
\end{equation}
The proof can be found in \cite[Lemma 5.5]{Be-rs} combined with the embedding $U^a_\Delta \subset L^\infty_t L^2_x$. This estimate allows us to treat each of the terms $Q_{\prec 2k} u$, $Q_{\prec 2k} v_q$, $Q_{\prec 2k} w_{q'}$ as elements of $X^{0,\frac12,1}$ when restricted to a single parallelepiped $P$. Since $X^{0,\frac12,1} \subset U^2_\Delta$, we then invoke \eqref{me1} to obtain \eqref{me2}.

Next, we interpolate between \eqref{me1} (with some $\frac{14}{15}<p<1$) and \eqref{me2} to obtain
\begin{equation} \label{mef}
\|Q_{\prec 2k} u \cdot Q_{\prec 2k} v_q \cdot \overline{Q_{\prec 2k} w_{q'}} \|_{l^1_{\mathcal{P}} L^1} \leq C_\epsilon 2^{-k\epsilon} \|u\|_{U^a_\Delta} \|v_q\|_{U^a_\Delta} \|w_{q'}\|_{U^a_\Delta},
\end{equation}
where $a=\frac{2}{p}>2$. The formalization of this interpolation can be found in the proof of Theorem 1.7 in \cite{Ca}

This gives the correct global bound in $L^1$. The next step is the summation with respect to the cubes $q,q'$. Here we have
\[
\begin{split}
|I| \leq C_\epsilon 2^{-k\epsilon} \sum_{q,q' \in \mathcal{C}_k} \|u\|_{U^a_\Delta} \|v_q\|_{U^a_\Delta} \|w_{q'}\|_{U^a_\Delta}
\les \|u\|_{U^a_\Delta} \left( \sum_{q \in \mathcal{C}_k}  \|v_q\|_{U^a_\Delta}^2 \right)^\frac12
\left( \sum_{q' \in \mathcal{C}_k}  \|w_{q'}\|_{U^a_\Delta}^2 \right)^\frac12,
\end{split}
\]
where this was possible because the sum $\sum_{q,q' \in \mathcal{C}_k}$ is restricted over cubes with the property $d(q,q') \les 2^k$. Invoking Lemma \ref{l2V2} and the embedding $V^2_\Delta\subset U^a_\Delta$ allows us to conclude
\[
|I| \leq C_\epsilon 2^{-k\epsilon} \la k \ra^2 \|u\|_{V^2_\Delta} \|v\|_{V^2_\Delta} \|w\|_{V^2_\Delta} \lesssim C_\epsilon 2^{-k\epsilon} \|u\|_{V^2_\Delta} \|v\|_{V^2_\Delta} \|w\|_{V^2_\Delta}
\]
which proves Theorem \ref{trilml} with an acceptable subpolynomial loss.
\end{proof}

\subsection{Proof of Theorem \ref{ltri1}}

Given the localizations in place, if $\xi, \eta, \zeta$ are in the support of $\hat f, \hat g, \hat h$ respectively, then by \eqref{ndeg} we have the following transversality condition
\begin{equation} \label{NN}
    |N(\xi) \wedge N(\eta) \wedge N(\zeta)| \approx  |\xi| \approx 2^k. 
\end{equation}
where we recall that $N(\xi)$ stands for the unit normal vector to the paraboloid at the point $(\xi,\xi^2)$. When $2^k\ll 1$, this represents a setup with degenerate transversality in the trilinear restriction estimate. Heuristically, based on similar considerations as \eqref{bilGe} for the bilinear estimate, this should then lead to an additional loss of $2^{-\frac{k}{2}}$ in the trilinear estimate. However, as noted in the discussion following \eqref{Ileq2k}, we have an additional localization at scale $2^k$; the key point is that we can leverage this to gain back a factor of $2^{\frac{k}{2}}$ and perfectly cancel out the loss from the transversality constant, which then gives us \eqref{triro}.

To formalize this argument we use a standard parabolic rescaling argument (e.g. as seen in \cite[Proposition 2.6]{tao_bilinear_1998}) in order to transform the problem into a set-up with transversality constant $\approx 1$. Consequently, the low frequency support will be sent to a cap on the paraboloid far from the origin with degenerate principal curvatures, at which point we will be able to apply a result from \cite{BeCh} to conclude the proof. We found this to be the easiest approach to proving \eqref{triro} but it is likely that other approaches could work too.

First, we run the parabolic rescaling argument and track the effects on the supports, parallelepipeds, and the numerology of the estimate. We recall the localizations of the factors involved:
\begin{itemize}
    \item $\hat g$ is localized in the cube $q$, $\hat h$ is localized in the cube $q'$, and $q,q' \in \mathcal{C}_{k-5}$ with $d(q,q') \approx 2^k$; we normalized $c(q)=e_3$ and argued that $|(c(q')-c(q)) \wedge e_3| \approx 2^{k}$.

    \item $\hat f$ is localized within the set $\{\xi \in \R^3: |\xi| \in [2^{k-1},2^{k+1}], |\angle (\xi,e_3)-\frac{\pi}2| \ll 2^k \}$
\end{itemize}
In this set-up we have the transversality condition \eqref{NN}.

Given $\gamma \in\R^3$ and $\delta>0$, we define the translation and scaling operators:
\begin{align*}
  T_\gamma f (\xi)= f(\xi-\gamma), \quad S_\delta f(\xi) = f(\delta^{-1}\xi).     
\end{align*}
We will slightly abuse notation and apply these operators to sets as well. 

Let $\gamma=e_3$ and let $L=S_{2^k}T_\gamma$. Then $S_{2^k} T_\gamma q$ is a cube of size $\approx 1$ centered at the origin and $S_{2^k} T_\gamma q'$ is a cube of size $\approx 1$ at scale $\approx 1$. On the other hand, $S_{2^k} T_\gamma$ moves the support of $\hat f$ to a subset of $\{\xi \in \R^3: |\xi_1| + |\xi_2| \les 1, \xi_3 \in [ l, l+2^k C]\}$ for some $\abs{l} \approx 2^{-k}$ and some $C >0$; in particular it is at scale $\approx 2^{-k}$. A direct computation then shows that if $\xi, \eta, \zeta$ are in the support of 
$L(\supp\hat f), L(\supp\hat g), L(\supp\hat h)$ respectively, then
\begin{equation} \label{Ngood}
|N(\xi) \wedge N(\eta) \wedge N(\zeta)| \approx 1.
\end{equation}

Next, we quantify how the estimate \eqref{triro} changes under this transformation. For this we fix some notation. As the rescalings primarily affect the Fourier supports of the functions, we use the extension operator notation for the rest of this section
\begin{align}\label{eq extension operator}
\mathcal{E} \hat f(x,t)\defeq e^{-it\Delta} f(x) = \int e^{i(x\cdot\xi+t \xi^2)} \hat f(\xi) d\xi.
\end{align}
For $\psi \in \{\hat f, \hat g,\hat h \}$, we let $\psi_L = \psi \circ L^{-1}$ and we record the identity
\[
\mathcal{E} \psi (x,t)= (2^{k})^n 
e^{i(x\cdot\eta+t \eta^2)} \mathcal{E} \psi_L(2^k (x+2t \gamma), 2^{2k} t) .
\]
If $\psi$ is supported in the set $\Omega$, that is  
\[
\mathcal{E}\psi = \int_{\Omega} e^{i(x\cdot\xi+t \xi^2)} \psi(\xi) d\xi,
\]
then $\psi_L$ is supported in the set $L(\Omega)$. Note that
\begin{align*}
    \|\psi\|_{L^2} = 2^{-\frac{3k}2} \|\psi_{L} \|_{L^2}. 
\end{align*}

We now study the effect of $L$ on the structure of the set of parallelepipeds $\mathcal{P}$. The translation $T_\gamma$ brings a Galilean transformation $(x,t) \mapsto (x+2t\eta,t)$ which leaves the parallelepipeds essentially invariant. To be precise, for a time band interval $[2^{-2k}t_0, 2^{-2k}(t_0+1)], t_0 \in \Z$ the parallelepipeds in it suffer a full translation by a factor of $2^{-2k+1} t_0$, followed by an additional factor of $2^{-2k}(t-t_0)$ whose effect is local in that a parallelepiped $P$ ``leaks" into its neighboring ones. 

The dilation $S_{2^k}$ brings a parabolic rescaling effect $(x,t) \mapsto (2^{k} x, 2^{2k} t)$. Under this transformation the parallelepipeds essentially become thin tubes, the set of which we denote by $\mathcal{T}$, defined as follows. We let $\mathcal{T}$ be the set of parallelepipeds with sides parallel to the coordinate axes and whose sides have length $2^{-k}$ in the direction of $e_3$ and $1$ in all other directions.  Explicitly, a parallelepiped $T \in \mathcal{T}$ is a spacetime set of the form
\[
T= T_z= \{ x \in \R^4:  x_i \in [ z_i , z_i+1], i=1,2,4;
 x_3 \in [2^{-k} z_i , 2^{-k}(z_i+1)] \},
\]
where $z \in \Z^4$ and we write $x=(x_1,x_2,x_3,x_4)$. We intentionally use the letter $T$ for parallelepipeds in $\mathcal{T}$ since their shape is close to that of a tube.

We define the quasi-norm $\ell^p_\mathcal{T} L^1$ in the same way as above, that is for a function $F: \R^4 \rightarrow \C$, we use the notation
\[
\|F\|_{\ell^p_{\mathcal{T}} L^1}: = \left( \sum_{T \in \mathcal{T}} \| F \|_{L^1(T)}^p\right)^\frac1p. 
\]
 
Finally, we study the effect of $L$ on the numerology of the estimate \eqref{triro}. Only the dilation operator affects the estimate; in particular, the $L^1$ norm on the left-hand side of \eqref{triro} scales by a factor of $2^{5k}=2^{3k+2k}$ (in the parabolic rescaling each spatial dimension scales by a factor of $2^k$, and the time variables scales by a factor of $2^{2k}$)
\[
    \|\mathcal{E}\hat{f} \cdot \mathcal{E}\hat{g} \cdot \mathcal{E}\hat{h}\|_{\ell^p_{\mathcal{P}} L^1} \approx 2^{-5k} \|\mathcal{E}\hat{f}_L \cdot \mathcal{E}\hat{g}_L \cdot \mathcal{E}\hat{h}_L\|_{\ell^p_{\mathcal{T}} L^1}.
    \]
Combined with the scaling of the $L^2$ norms under $L$, we see that in order to prove \eqref{triro}, it therefore suffices to prove the estimate 
\begin{align} \label{trirom}
    \norm{\mathcal{E}\hat{f}_L \cdot \mathcal{E}\hat{g}_L \cdot \mathcal{E}\hat{h}_L }_{\ell^p_{\mathcal{T}}L^1} \leq C_\epsilon 2^{-k\epsilon} \cdot 2^{\frac{k}{2}} \norm{f_L}_{L^2} \norm{g_L}_{L^2} \norm{h_L}_{L^2},
\end{align}
where the Fourier supports satisfy \eqref{Ngood}.

This estimate is closer to what we want but the parametrization of the extension operator for the paraboloid \eqref{eq extension operator} is no longer the natural one for $\hat{f}_L$ due to its high frequency support -  a natural parametrization should be close to the intrinsic formulation of the extension operator, see \cite{BeCh} for more details. Technically, the parametrization $\tau(\xi):=|\xi|^2$ of the paraboloid fails the basic requirement of a smooth parametrization
\[
| \partial_{\xi}^\alpha \tau| \les_\alpha 1,
\]
once $|\xi| \gg 1$ and for $|\alpha|=1$. This is necessary in order to invoke the theory in the companion paper \cite{BeCh} and this is why below we adjust the parametrization of the paraboloid accordingly. 

In particular, given the localizations in place, the normal direction to the paraboloid in this set is roughly in the $e_3$ direction. Thus, we rewrite the equation of the paraboloid $\tau=\abs{\xi}^2$ as $\xi_3=\sqrt{\tau-\xi_1^2-\xi_2^2}\eqdef \varphi(\xi_1,\xi_2,\tau)$. Based on this we then write
\begin{align*}
    \mathcal{E}\hat{f}_L=\int e^{i(x\cdot\xi+t\abs{\xi}^2} \widehat{f}_L(\xi)d\xi=\int e^{i((x_1,x_2,t)\cdot (\xi_1,\xi_2,\tau) + x_3\varphi(\xi_1,\xi_2,\tau))} F(\xi_1,\xi_2,\tau) d\xi_1 d\xi_2 d\tau \eqdef \tilde{\mathcal{E}}F,
\end{align*}
where
\begin{align*}
    F(\xi_1,\xi_2,\tau)=\hat{f}_L(\xi_1,\xi_2,\varphi(\xi_1,\xi_2,\tau)) \partial_\tau \varphi(\xi_1,\xi_2,\tau)
\end{align*}
A direct computation shows that $\|F\|_{L^2_{\xi_1,\xi_2,\tau}} \approx 2^{\frac{k}2} \|\hat{f}_L\|_{L^2_\xi}$. Hence, in this new set-up, \eqref{trirom} is equivalent to the estimate
\begin{equation} \label{triromf}
\| \tilde{\mathcal{E}} F \cdot \mathcal{E} G \cdot \mathcal{E} H \|_{\ell^p_{\mathcal{T}}L^1} \leq C_\epsilon 2^{-k\epsilon} \|F\|_{L^2} \|G\|_{L^2} \|H\|_{L^2}. 
\end{equation}
We note that $\tilde{\mathcal{E}}$ is an extension operator associated to the same underlying hypersurface, the paraboloid, but it is just rewritten in more natural coordinates in the sense described above.

The proof of \eqref{triromf} follows from the following corollary from \cite{BeCh}, and this concludes the proof of Theorem \ref{ltri1}.

\begin{lemma}[{\cite[Corollary 1.5]{BeCh}}]
Let $p>\frac{14}{15}$. Let $\mathbb{P}=\{(\xi,|\xi|^2): \xi \in \R^3\}$ denote the paraboloid in $\R^4$ and let $S_i=\mathbb{P}\cap B_1(x_i)$ where $\abs{x_1},\abs{x_2}\leq 8$ and $\abs{x_3}\approx 2^{-k}$ such that they satisfy \eqref{ndeg} with $\nu\approx 1$, that is
\begin{align*}
    \abs{N(\zeta_1)\wedge N(\zeta_2) \wedge N(\zeta_3)} \gtrsim 1\quad\forall \zeta_i\in S_i.
\end{align*}
Let $\mathcal{E}_i$ denote the extension operator associated to each $S_i$. Then for all $\epsilon>0$ there exists $C_\epsilon>0$ such that
\begin{align}
    \| \prod_{i=1}^3  \tilde{\mathcal{E}}_i f_i\|_{\ell^p_\mathcal{P} L^{1}} \leq C_\epsilon 2^{-k\epsilon} \prod_{i=1}^3 \|f_i\|_{L^2}.
\end{align}
\end{lemma}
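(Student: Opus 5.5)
The statement is Corollary 1.5 of \cite{BeCh}. The plan is to derive it from the general $(n-1)$-linear restriction machinery of \cite{Be-mrec}, as augmented in \cite{BeCh} to allow degenerate curvature. The argument has three steps: localize to tubes, apply a polynomial-partitioning trilinear estimate with curvature bookkeeping, and resum.

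\textbf{Step 1: local reduction.} Fix $T\in\mathcal{T}$. Since every $\tilde{\mathcal{E}}_i f_i$ has Fourier support in a unit ball, it is morally constant on unit balls. The cap $S_3$ sits at distance $2^{-k}$ from the origin with normal close to $e_3$, so its wave packets are tubes elongated in the $e_3$ direction. On a single $T$ of length $2^{-k}$ in $e_3$, the generic transversal trilinear $L^1$ estimate \eqref{trint}/\eqref{triL1} with $\theta\approx 1$ (using \eqref{Ngood}) gives
\[
\|\prod_i \tilde{\mathcal{E}}_i f_i\|_{L^1(T)} \lesssim \prod_i \|f_i\|_{L^2}
\]
after localizing each $f_i$ to the wave packets meeting $T$. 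Here the $L^1$ bound on $T$ is obtained from a Plancherel computation in the two transversal directions combined with a Loomis--Whitney/Kakeya-type $L^1$ bound. This gives the $\ell^\infty_{\mathcal{T}}L^1$ endpoint.

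\textbf{Step 2: improved summation.} To reach $\ell^p_{\mathcal{T}}L^1$ with $p<1$, I would run the induction-on-scales polynomial partitioning argument of \cite{Be-mrec}, treating each tube $T$ as an atomic unit (a ``fat point''). I would partition $\R^4$ by the zero set of a polynomial of degree $D$ into cells carrying equal $\ell^p L^1$ mass. In the cellular case, each wave packet enters at most $D$ cells, and induction closes with a gain of $D^{-\delta}$ provided $p>\frac{14}{15}$. In the algebraic case, tangential packets concentrate near a $3$-dimensional variety, and one invokes the lower-dimensional (bilinear-type) estimate on the variety.

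The curvature input enters in the algebraic/tangential case through the bilinear $L^2$ estimate \eqref{bilGe} between packets from different caps. $S_1$ and $S_2$ have curvature $\approx 1$. $S_3$ has two principal curvatures $\approx 1$ and one of size $\approx 2^{k}$ (along $\xi_3$ in the $\varphi$-parametrization). The point is that all estimates should use only the nondegenerate curvatures of $S_3$ together with transversality \eqref{Ngood}, and should never use the degenerate direction. The tubes $T$ are designed so that this degenerate direction is exactly absorbed inside one $T$, which is why the measurement within $T$ is $L^1$ rather than $L^p$.

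\textbf{Step 3: resummation and the main obstacle.} Summing over the induction scales from $1$ up to $2^{-k}$ (and beyond) costs only $\epsilon$-losses, which produces the factor $C_\epsilon 2^{-k\epsilon}$. I expect the main obstacle to be the tangential case. There one must show that the variety estimate is insensitive to the degenerate curvature of $S_3$, i.e.\ that tubes from $S_3$ tangent to a hypersurface still interact bilinearly with those from $S_1$ and $S_2$ with a constant independent of $k$. I would first attempt this by a parabolic rescaling of $S_3$ alone in the $e_3$-direction by $2^{-k}$, which converts it into a surface with curvature $\approx 1$, and then check that the tubes in $\mathcal{T}$ map to unit balls, so that the resulting estimate matches Theorem \ref{TSS}.
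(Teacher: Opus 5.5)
The paper gives no proof of this lemma (it is imported from \cite{BeCh}, with only the remark that the argument is a polynomial-partitioning one built on \cite{Be-mrec}), so the question is whether your sketch would work. As it stands it does not.

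The first problem is the geometry of $S_3$. At a point of $\mathbb{P}$ with $|\xi|=r$, the principal curvatures are $2(1+4r^2)^{-1/2}\approx r^{-1}$ (twice, orthogonally to $\xi$) and $2(1+4r^2)^{-3/2}\approx r^{-3}$ (along $\xi$). Equivalently, the Hessian of $\varphi=(\tau-\xi_1^2-\xi_2^2)^{1/2}$ on the unit parameter domain of $S_3$ has eigenvalues of size $2^k,2^k,2^{3k}$. So for $r\approx 2^{-k}$ the cap $S_3$ has \emph{no} principal curvature of size $1$, not ``two of size $1$ and one of size $2^k$''. At unit scale it sits in an $O(2^k)$-neighbourhood of a hyperplane with normal $\approx e_3$, so your plan to use ``only the nondegenerate curvatures of $S_3$'' has nothing to work with. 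This flatness is exactly why $\tilde{\mathcal E}_3 f_3$ does not disperse on a $T\in\mathcal T$ and why the quasi-norm is $L^1$ inside $T$. The $2^k$-curvatures of $S_3$ only act at scale $2^{-k}$, i.e.\ across tubes, and the $\ell^p$ gain must be extracted from that, with $S_3$-wave packets that are anisotropic because of the extra $2^{3k}$ direction. Your Step 2 is the generic cellular/algebraic template of \cite{Be-mrec}, which the paper says is not sufficient unless $2^k\approx 1$. What is missing is precisely the new mechanism of \cite{BeCh}: how these packets and the long fat points $T$ interact with the partition, and why the tangential case closes with only a $2^{-k\epsilon}$ loss. (Step 1 is correct in its conclusion but should not lean on \eqref{triL1}, which the paper describes as conjectural in general. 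It is elementary: $\|\tilde{\mathcal E}_3 f_3\|_{L^\infty}\lesssim\|f_3\|_{L^2}$, Plancherel in $\xi_3$ along lines parallel to $e_3$ gives $\|\mathcal E_i f_i\|_{L^2(T)}\lesssim\|f_i\|_{L^2}$ for $i=1,2$, and Cauchy--Schwarz closes.)

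The second problem is that the fallback in Step 3 cannot work. You cannot rescale $S_3$ alone, since any change of variables in $(x,t)$ acts on all three factors of the product. The natural choice $y_3=2^k x_3$ does turn each $T$ into a unit cube, but it replaces $\xi_3$ by $\eta_3=2^{-k}\xi_3$. Then $S_1,S_2$ become pieces of $\tau=\xi_1^2+\xi_2^2+2^{2k}\eta_3^2$ of length $\approx 2^{-k}$ in $\eta_3$, with curvature $\approx 2^{2k}$ in that direction. Meanwhile $S_3$ (now the graph $\eta_3=2^{-k}\varphi$) keeps a curvature $\approx 2^{2k}$ in the $\tau$-direction. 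This is not the setting of Theorem \ref{TSS}, and the paper indicates that even the general-hypersurface version in \cite{Be-mrec} does not cover this degeneracy. Naively splitting $S_1,S_2$ into $\approx 2^{-k}$ unit pieces and resumming in a quasi-norm with $p<1$ loses a power of $2^{-k}$, far more than the allowed $2^{-k\epsilon}$. So the reduction to Theorem \ref{TSS} does not go through, and the step you yourself identify as the main obstacle is left unresolved.
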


We remark that the proof of this result uses polynomial partitioning and other ideas from Fourier restriction theory, based on \cite{Be-mrec}, which is very different from this current paper. For further context and discussion about these ideas, see for example \cite{Gu-I, Gu-II, Be-mrec, BeCh} and the references therein.

\appendix \section{Transference to \texorpdfstring{$U^2_\Delta$}{U2}}

We establish how the trilinear estimate \eqref{triro} for free solutions can be transferred to functions in $U^2_\Delta$, which proves \eqref{me1}. We closely follow the argument of Candy in \cite[Theorem 7]{Ca-note}. Note that we cannot directly quote that argument in our case due to the usage of the quasi-norm $\ell^p_{\mathcal{P}} L^1$ with $p<1$ but the same strategy can be easily adapted to this setting. For clarity and convenience, we will omit the $C_\epsilon 2^{-k\epsilon}$ term as this constant is easily seen to remain essentially unaffected throughout the argument.

We start with an atom $u_1 = \sum_{I \in \mathcal{I}} e^{-it\Delta} f_I$ in $U^2_\Delta$, while letting $u_2= e^{-it\Delta} g, u_3= e^{-it\Delta} h$ be free solutions. Let $(\epsilon_I)_{I \in \mathcal{I}}$ be a family of independent, identically distributed Rademacher random variables, i.e. $\epsilon_I = 1$
with probability $\frac12$, $\epsilon_I = -1$ with probability $\frac12$. 
Since the intervals $I$ are disjoint, by invoking Khintchine's inequality we have
\[
|u_1| \les \left( \sum_I |e^{-it\Delta} f_I|^2 \right)^\frac12 \approx  \left(\mathbb{E} |\sum_I \epsilon_I e^{-it\Delta} f_I|^p\right)^{\frac{1}{p}}.
\]
Then we have
\[
\begin{split}
& \|u_1 u_2 u_3\|^p_{\ell^p_{\mathcal{P}} L^1}  = \sum_P   \|u_1 u_2 u_3\|^p_{L^1(P)} 
  \les \sum_P   \|(\mathbb{E}|\sum_I \epsilon_I e^{-it\Delta} f_I|^p)^\frac1p u_2 u_3\|^p_{L^1(P)} \\
 \les & \sum_P   \|(\mathbb{E}|\sum_I \epsilon_I e^{-it\Delta} f_I|^p |u_2|^p |u_3|^p)^\frac1p \|^p_{L^1(P)}
= \sum_P   \|\mathbb{E}|\sum_I \epsilon_I e^{-it\Delta} f_I|^p |u_2|^p |u_3|^p \|_{L^\frac1p(P)}\\
 \les & \sum_P   \mathbb{E} \||\sum_I \epsilon_I e^{-it\Delta} f_I|^p |u_2|^p |u_3|^p \|_{L^\frac1p(P)}
= \mathbb{E} \sum_P \||\sum_I \epsilon_I e^{-it\Delta} f_I|^p |u_2|^p |u_3|^p \|_{L^\frac1p(P)}\\
 = & \mathbb{E} \sum_P \||\sum_I \epsilon_I e^{-it\Delta} f_I| |u_2| |u_3| \|_{L^1(P)}^p  \les \mathbb{E} \|\sum_I \epsilon_I e^{-it\Delta} f_I\|_{L^2}^p \|g\|_{L^2}^p \|h\|_{L^2}^p \\
  = &\mathbb{E} \|\sum_I \epsilon_I f_I\|_{L^2}^p \|g\|_{L^2}^p \|h\|_{L^2}^p.
\end{split}
\]
In the above we used that $\frac1p \geq 1$ in order to invoke Minkowski's inequality to pass to the the first inequality on the third line. As a consequence,
\[
\|u_1 u_2 u_3\|_{\ell^p_{\mathcal{P}} L^1} \les (\mathbb{E} \|\sum_I \epsilon_I f_I\|_{L^2}^p)^\frac1p \|u_2\|_{L^2} \|u_3\|_{L^2}.
\]
Another application of Khintchine's inequality shows that 
\[
(\mathbb{E} \|\sum_I \epsilon_I f_I\|_{L^2}^p)^\frac1p \les 
(\mathbb{E} \|\sum_I \epsilon_I f_I\|_{L^2}^2)^\frac12= \left( 
\sum_I \|f_I\|_{L^2}^2 \right)^\frac12 = \|u_1\|_{U^2_\Delta}, 
\]
from which we conclude with 
\begin{equation} \label{rep1}
\|u_1 u_2 u_3\|_{\ell^p_{\mathcal{P}} L^1} \les \|u_1\|_{U^2_\Delta} \|g\|_{L^2} \|h\|_{L^2}.
\end{equation}
In this inequality we generalized the first free wave to an element in $U^2_\Delta$. Then, just as in \cite{Ca-note}, a repetition of the above argument with $u_2$ playing the role of $u_1$ above, that is being the atom in $U^2_\Delta$, and using \eqref{rep1}, generalizes the second free wave to an element in $U^2_\Delta$. One last repetition generates the full transference as stated in \eqref{me1}.

We note that other arguments for transferring multilinear estimates to $U^2_\Delta$ have been used in the literature, and any of them could have potentially been used here as an alternative proof. First, one could try to generalize the trilinear restriction estimate to vector-valued functions and then run the transference argument for $U^2_\Delta$ atoms, as in \cite{Ca}. Second, using the wave packet theory for $U^2_\Delta$ developed in \cite{CaHe-t}, one could try to prove the trilinear restriction estimate in $U^2_\Delta$ directly. We do not pursue these ideas here, and we simply include them for the sake of completeness.

\bibliographystyle{amsplain} \bibliography{QNLS-refs}

@article {Bea,
    AUTHOR = {Beals, Michael},
     TITLE = {Self-spreading and strength of singularities for solutions to
              semilinear wave equations},
   JOURNAL = {Ann. of Math. (2)},
  FJOURNAL = {Annals of Mathematics. Second Series},
    VOLUME = {118},
      YEAR = {1983},
    NUMBER = {1},
     PAGES = {187--214},
      ISSN = {0003-486X,1939-8980},
   MRCLASS = {35L70 (58G17)},
  MRNUMBER = {707166},
MRREVIEWER = {K.\ A.\ Yagdjian},
       DOI = {10.2307/2006959},
       URL = {https://doi.org/10.2307/2006959},
}

@article {Be-mrec,
    AUTHOR = {Bejenaru, Ioan},
     TITLE = {The almost optimal multilinear restriction estimate for
              hypersurfaces with curvature: the case of {$n-1$}
              hypersurfaces in {$\Bbb R^n$}},
   JOURNAL = {Int. Math. Res. Not. IMRN},
  FJOURNAL = {International Mathematics Research Notices. IMRN},
      YEAR = {2022},
    NUMBER = {20},
     PAGES = {16363--16404},
      ISSN = {1073-7928,1687-0247},
   MRCLASS = {53C21},
  MRNUMBER = {4498177},
MRREVIEWER = {Jeffrey\ S.\ Case},
       DOI = {10.1093/imrn/rnab208},
       URL = {https://doi.org/10.1093/imrn/rnab208},
}

@article {BeDa,
    AUTHOR = {Bejenaru, Ioan and De Silva, Daniela},
     TITLE = {Low regularity solutions for a 2{D} quadratic nonlinear
              {S}chr\"odinger equation},
   JOURNAL = {Trans. Amer. Math. Soc.},
  FJOURNAL = {Transactions of the American Mathematical Society},
    VOLUME = {360},
      YEAR = {2008},
    NUMBER = {11},
     PAGES = {5805--5830},
      ISSN = {0002-9947,1088-6850},
   MRCLASS = {35Q55 (35A07 35B30 35B45)},
  MRNUMBER = {2425692},
MRREVIEWER = {Thomas\ Duyckaerts},
       DOI = {10.1090/S0002-9947-08-04415-2},
       URL = {https://doi.org/10.1090/S0002-9947-08-04415-2},
}

@article {BeHe,
    AUTHOR = {Bejenaru, Ioan and Herr, Sebastian},
     TITLE = {The cubic {D}irac equation: small initial data in {$H^{\frac
              12}(\Bbb R^2)$}},
   JOURNAL = {Comm. Math. Phys.},
  FJOURNAL = {Communications in Mathematical Physics},
    VOLUME = {343},
      YEAR = {2016},
    NUMBER = {2},
     PAGES = {515--562},
      ISSN = {0010-3616,1432-0916},
   MRCLASS = {35Q41 (35B30 35L10 35P25)},
  MRNUMBER = {3477346},
MRREVIEWER = {Marco\ Falconi},
       DOI = {10.1007/s00220-015-2508-4},
       URL = {https://doi.org/10.1007/s00220-015-2508-4},
}

@article {BeHeTa,
    AUTHOR = {Bejenaru, Ioan and Herr, Sebastian and Tataru, Daniel},
     TITLE = {A convolution estimate for two-dimensional hypersurfaces},
   JOURNAL = {Rev. Mat. Iberoam.},
  FJOURNAL = {Revista Matem\'atica Iberoamericana},
    VOLUME = {26},
      YEAR = {2010},
    NUMBER = {2},
     PAGES = {707--728},
      ISSN = {0213-2230,2235-0616},
   MRCLASS = {42B35 (35Q55 42B37 47B38)},
  MRNUMBER = {2677013},
MRREVIEWER = {Michael\ J.\ Goldberg},
       DOI = {10.4171/RMI/615},
       URL = {https://doi.org/10.4171/RMI/615},
}

@article{Be-rs,
    author = {Bejenaru, Ioan},
    title = {A new resolution space for nonlinear {S}chr\"odinger equations and applications},
    journal = {},
    year = {2026},
    note = {Preprint available at \url{https://arxiv.org/abs/2606.13927}},
}

@article{BeCh,
    author = {Bejenaru, Ioan and Chan, Tiklung},
    title = {The $(n-1)$-linear restriction estimate in $\mathbb{R}^n$ with degenerate curvature},
    journal = {},
    year = {2026},
    note = {Preprint},
}

@article {BIKT-1,
  AUTHOR =	 {Bejenaru, Ioan and Ionescu, Alexandru D. and Kenig, Carlos E. and
                  Tataru, Daniel},
  TITLE =	 {Global {S}chr\"odinger maps in dimensions {$d\geq
                  2$}: small data in the critical {S}obolev spaces},
  JOURNAL =	 {Ann. of Math. (2)},
  FJOURNAL =	 {Annals of Mathematics. Second Series},
  VOLUME =	 173,
  YEAR =	 2011,
  NUMBER =	 3,
  PAGES =	 {1443--1506},
  ISSN =	 {0003-486X},
  CODEN =	 {ANMAAH},
  MRCLASS =	 {58J35 (35B65 35K45 46E35)},
  MRNUMBER =	 {2800718 (2012g:58048)},
  MRREVIEWER =	 {Atanas G. Stefanov},
  DOI =		 {10.4007/annals.2011.173.3.5},
  URL =		 {http://dx.doi.org/10.4007/annals.2011.173.3.5},
}

@article {BeTa-2,
    AUTHOR = {Bejenaru, Ioan and Tataru, Daniel},
     TITLE = {Large data local solutions for the derivative {NLS} equation},
   JOURNAL = {J. Eur. Math. Soc. (JEMS)},
  FJOURNAL = {Journal of the European Mathematical Society (JEMS)},
    VOLUME = {10},
      YEAR = {2008},
    NUMBER = {4},
     PAGES = {957--985},
      ISSN = {1435-9855,1435-9863},
   MRCLASS = {35Q55 (35B30)},
  MRNUMBER = {2443925},
MRREVIEWER = {Peter\ E.\ Zhidkov},
       DOI = {10.4171/JEMS/136},
       URL = {https://doi.org/10.4171/JEMS/136},
}

@article {BeCaTa,
    AUTHOR = {Bennett, Jonathan and Carbery, Anthony and Tao, Terence},
     TITLE = {On the multilinear restriction and {K}akeya conjectures},
   JOURNAL = {Acta Math.},
  FJOURNAL = {Acta Mathematica},
    VOLUME = {196},
      YEAR = {2006},
    NUMBER = {2},
     PAGES = {261--302},
      ISSN = {0001-5962},
     CODEN = {ACMAA8},
   MRCLASS = {42B20 (44A15 45P05 46E35)},
  MRNUMBER = {2275834 (2007h:42019)},
MRREVIEWER = {Loukas Grafakos},
       DOI = {10.1007/s11511-006-0006-4},
       URL = {http://dx.doi.org/10.1007/s11511-006-0006-4},
}

@article {BeCaWr,
    AUTHOR = {Bennett, Jonathan and Carbery, Anthony and Wright, James},
     TITLE = {A non-linear generalisation of the {L}oomis-{W}hitney
              inequality and applications},
   JOURNAL = {Math. Res. Lett.},
  FJOURNAL = {Mathematical Research Letters},
    VOLUME = {12},
      YEAR = {2005},
    NUMBER = {4},
     PAGES = {443--457},
      ISSN = {1073-2780},
   MRCLASS = {26D15 (42B10)},
  MRNUMBER = {2155223},
MRREVIEWER = {D.\ C.\ Russell},
       DOI = {10.4310/MRL.2005.v12.n4.a1},
       URL = {https://doi.org/10.4310/MRL.2005.v12.n4.a1},
}

@article {Bo-1,
    AUTHOR = {Bourgain, J.},
     TITLE = {Fourier transform restriction phenomena for certain lattice
              subsets and applications to nonlinear evolution equations.
              {I}. {S}chr\"odinger equations},
   JOURNAL = {Geom. Funct. Anal.},
  FJOURNAL = {Geometric and Functional Analysis},
    VOLUME = {3},
      YEAR = {1993},
    NUMBER = {2},
     PAGES = {107--156},
      ISSN = {1016-443X,1420-8970},
   MRCLASS = {35Q55 (11L07 35B10)},
  MRNUMBER = {1209299},
MRREVIEWER = {Yun\ Mei\ Chen},
       DOI = {10.1007/BF01896020},
       URL = {https://doi.org/10.1007/BF01896020},
}

@article {Bo-2,
    AUTHOR = {Bourgain, J.},
     TITLE = {Fourier transform restriction phenomena for certain lattice
              subsets and applications to nonlinear evolution equations.
              {II}. {T}he {K}d{V}-equation},
   JOURNAL = {Geom. Funct. Anal.},
  FJOURNAL = {Geometric and Functional Analysis},
    VOLUME = {3},
      YEAR = {1993},
    NUMBER = {3},
     PAGES = {209--262},
      ISSN = {1016-443X,1420-8970},
   MRCLASS = {35Q55},
  MRNUMBER = {1215780},
MRREVIEWER = {Yun\ Mei\ Chen},
       DOI = {10.1007/BF01895688},
       URL = {https://doi.org/10.1007/BF01895688},
}

@article {Ca,
    AUTHOR = {Candy, Timothy},
     TITLE = {Multi-scale bilinear restriction estimates for general phases},
   JOURNAL = {Math. Ann.},
  FJOURNAL = {Mathematische Annalen},
    VOLUME = {375},
      YEAR = {2019},
    NUMBER = {1-2},
     PAGES = {777--843},
      ISSN = {0025-5831,1432-1807},
   MRCLASS = {42B37 (35L10 35Q41)},
  MRNUMBER = {4000257},
MRREVIEWER = {Piero\ Antonio\ D'Ancona},
       DOI = {10.1007/s00208-019-01841-4},
       URL = {https://doi.org/10.1007/s00208-019-01841-4},
}

@incollection {Ca-note,
    AUTHOR = {Candy, Timothy},
     TITLE = {A note on bilinear wave-{S}chr\"odinger interactions},
 BOOKTITLE = {2019--20 {MATRIX} annals},
    SERIES = {MATRIX Book Ser.},
    VOLUME = {4},
     PAGES = {537--549},
 PUBLISHER = {Springer, Cham},
      YEAR = {[2021] \copyright 2021},
      ISBN = {978-3-030-62496-5; 978-3-030-62497-2},
   MRCLASS = {35Q55},
  MRNUMBER = {4294792},
       DOI = {10.1007/978-3-030-62497-2\_33},
       URL = {https://doi.org/10.1007/978-3-030-62497-2_33},
}

@article {CaHe-t,
    AUTHOR = {Candy, Timothy and Herr, Sebastian},
     TITLE = {Transference of bilinear restriction estimates to quadratic
              variation norms and the {D}irac-{K}lein-{G}ordon system},
   JOURNAL = {Anal. PDE},
  FJOURNAL = {Analysis \& PDE},
    VOLUME = {11},
      YEAR = {2018},
    NUMBER = {5},
     PAGES = {1171--1240},
      ISSN = {2157-5045,1948-206X},
   MRCLASS = {42B37 (35Q41 42B10 42B20 81Q05)},
  MRNUMBER = {3785603},
MRREVIEWER = {Ahmed\ Lesfari},
       DOI = {10.2140/apde.2018.11.1171},
       URL = {https://doi.org/10.2140/apde.2018.11.1171},
}

@article {CaHe,
    AUTHOR = {Candy, Timothy and Herr, Sebastian},
     TITLE = {On the division problem for the wave maps equation},
   JOURNAL = {Ann. PDE},
  FJOURNAL = {Annals of PDE. Journal Dedicated to the Analysis of Problems
              from Physical Sciences},
    VOLUME = {4},
      YEAR = {2018},
    NUMBER = {2},
     PAGES = {Paper No. 17, 61},
      ISSN = {2524-5317,2199-2576},
   MRCLASS = {35L51 (35B30 35L52 35L72 35P25)},
  MRNUMBER = {3883339},
MRREVIEWER = {Rodica\ Luca},
       DOI = {10.1007/s40818-018-0054-z},
       URL = {https://doi.org/10.1007/s40818-018-0054-z},
}

@book {Caz-book,
    AUTHOR = {Cazenave, Thierry},
     TITLE = {Semilinear {S}chr\"odinger equations},
    SERIES = {Courant Lecture Notes in Mathematics},
    VOLUME = {10},
 PUBLISHER = {New York University, Courant Institute of Mathematical
              Sciences, New York; American Mathematical Society, Providence,
              RI},
      YEAR = {2003},
     PAGES = {xiv+323},
      ISBN = {0-8218-3399-5},
   MRCLASS = {35Q55 (35-01 35J10 35Q40)},
  MRNUMBER = {2002047},
MRREVIEWER = {Woodford\ W.\ Zachary},
       DOI = {10.1090/cln/010},
       URL = {https://doi.org/10.1090/cln/010},
}

@article {CaWe,
    AUTHOR = {Cazenave, Thierry and Weissler, Fred B.},
     TITLE = {The {C}auchy problem for the critical nonlinear
              {S}chr\"odinger equation in {$H^s$}},
   JOURNAL = {Nonlinear Anal.},
  FJOURNAL = {Nonlinear Analysis. Theory, Methods \& Applications. An
              International Multidisciplinary Journal},
    VOLUME = {14},
      YEAR = {1990},
    NUMBER = {10},
     PAGES = {807--836},
      ISSN = {0362-546X,1873-5215},
   MRCLASS = {35Q55 (45K05)},
  MRNUMBER = {1055532},
MRREVIEWER = {Victor\ V.\ Zharinov},
       DOI = {10.1016/0362-546X(90)90023-A},
       URL = {https://doi.org/10.1016/0362-546X(90)90023-A},
}

@article {GeMaSh-1,
    AUTHOR = {Germain, Pierre and Masmoudi, Nader and Shatah, Jalal},
     TITLE = {Global solutions for 3{D} quadratic {S}chr\"odinger equations},
   JOURNAL = {Int. Math. Res. Not. IMRN},
  FJOURNAL = {International Mathematics Research Notices. IMRN},
      YEAR = {2009},
    NUMBER = {3},
     PAGES = {414--432},
      ISSN = {1073-7928,1687-0247},
   MRCLASS = {35Q55},
  MRNUMBER = {2482120},
MRREVIEWER = {Pascal\ B\'egout},
       DOI = {10.1093/imrn/rnn135},
       URL = {https://doi.org/10.1093/imrn/rnn135},
}

@article {Gu-main,
    AUTHOR = {Guth, Larry},
     TITLE = {The endpoint case of the {B}ennett-{C}arbery-{T}ao multilinear
              {K}akeya conjecture},
   JOURNAL = {Acta Math.},
  FJOURNAL = {Acta Mathematica},
    VOLUME = {205},
      YEAR = {2010},
    NUMBER = {2},
     PAGES = {263--286},
      ISSN = {0001-5962},
     CODEN = {ACMAA8},
   MRCLASS = {42B15 (28A75 52A38)},
  MRNUMBER = {2746348 (2012c:42027)},
MRREVIEWER = {Dmitry Ryabogin},
       DOI = {10.1007/s11511-010-0055-6},
       URL = {http://dx.doi.org/10.1007/s11511-010-0055-6},
}

@article {Gu-I,
    AUTHOR = {Guth, Larry},
     TITLE = {A restriction estimate using polynomial partitioning},
   JOURNAL = {J. Amer. Math. Soc.},
  FJOURNAL = {Journal of the American Mathematical Society},
    VOLUME = {29},
      YEAR = {2016},
    NUMBER = {2},
     PAGES = {371--413},
      ISSN = {0894-0347},
   MRCLASS = {42B20},
  MRNUMBER = {3454378},
       DOI = {10.1090/jams827},
       URL = {http://dx.doi.org/10.1090/jams827},
}

@article {Gu-II,
    AUTHOR = {Guth, Larry},
     TITLE = {Restriction estimates using polynomial partitioning {II}},
   JOURNAL = {Acta Math.},
  FJOURNAL = {Acta Mathematica},
    VOLUME = {221},
      YEAR = {2018},
    NUMBER = {1},
     PAGES = {81--142},
      ISSN = {0001-5962},
   MRCLASS = {42B20},
  MRNUMBER = {3877019},
       DOI = {10.4310/ACTA.2018.v221.n1.a3},
       URL = {https://doi.org/10.4310/ACTA.2018.v221.n1.a3},
}

@article {HaMiNa,
    AUTHOR = {Hayashi, Nakao and Mizumachi, Tetsu and Naumkin, Pavel I.},
     TITLE = {Time decay of small solutions to quadratic nonlinear
              {S}chr\"odinger equations in 3{D}},
   JOURNAL = {Differential Integral Equations},
  FJOURNAL = {Differential and Integral Equations. An International Journal
              for Theory \& Applications},
    VOLUME = {16},
      YEAR = {2003},
    NUMBER = {2},
     PAGES = {159--179},
      ISSN = {0893-4983},
   MRCLASS = {35Q55 (35B40 81Q05)},
  MRNUMBER = {1947090},
MRREVIEWER = {Arthur\ H.\ Vartanian},
}

@article {HaNa,
    AUTHOR = {Hayashi, Nakao and Naumkin, Pavel I.},
     TITLE = {On the quadratic nonlinear {S}chr\"odinger equation in three
              space dimensions},
   JOURNAL = {Internat. Math. Res. Notices},
  FJOURNAL = {International Mathematics Research Notices},
      YEAR = {2000},
    NUMBER = {3},
     PAGES = {115--132},
      ISSN = {1073-7928,1687-0247},
   MRCLASS = {35Q55 (35K15)},
  MRNUMBER = {1741610},
MRREVIEWER = {Grzegorz\ Karch},
       DOI = {10.1155/S1073792800000088},
       URL = {https://doi.org/10.1155/S1073792800000088},
}

@article {HHK,
    AUTHOR = {Hadac, Martin and Herr, Sebastian and Koch, Herbert},
     TITLE = {Well-posedness and scattering for the {KP}-{II} equation in a
              critical space},
   JOURNAL = {Ann. Inst. H. Poincar\'e Anal. Non Lin\'eaire},
  FJOURNAL = {Annales de l'Institut Henri Poincar\'e. Analyse Non
              Lin\'eaire},
    VOLUME = {26},
      YEAR = {2009},
    NUMBER = {3},
     PAGES = {917--941},
      ISSN = {0294-1449},
   MRCLASS = {35Q53 (35B30 35P25)},
  MRNUMBER = {2526409 (2010d:35301)},
MRREVIEWER = {Ming Mei},
       DOI = {10.1016/j.anihpc.2008.04.002},
       URL = {http://dx.doi.org/10.1016/j.anihpc.2008.04.002},
}

@article {HeTaTz,
    AUTHOR = {Herr, Sebastian and Tataru, Daniel and Tzvetkov, Nikolay},
     TITLE = {Global well-posedness of the energy-critical nonlinear
              {S}chr\"odinger equation with small initial data in {$H^1(\Bbb
              T^3)$}},
   JOURNAL = {Duke Math. J.},
  FJOURNAL = {Duke Mathematical Journal},
    VOLUME = {159},
      YEAR = {2011},
    NUMBER = {2},
     PAGES = {329--349},
      ISSN = {0012-7094,1547-7398},
   MRCLASS = {35Q55 (35B30)},
  MRNUMBER = {2824485},
MRREVIEWER = {Matthew\ D.\ Blair},
       DOI = {10.1215/00127094-1415889},
       URL = {https://doi.org/10.1215/00127094-1415889},
}

@article {Ka,
    AUTHOR = {Kawahara, Yuichiro},
     TITLE = {Global existence and asymptotic behavior of small solutions to
              nonlinear {S}chr\"odinger equations in 3{D}},
   JOURNAL = {Differential Integral Equations},
  FJOURNAL = {Differential and Integral Equations. An International Journal
              for Theory \& Applications},
    VOLUME = {18},
      YEAR = {2005},
    NUMBER = {2},
     PAGES = {169--194},
      ISSN = {0893-4983},
   MRCLASS = {35Q55 (35B40)},
  MRNUMBER = {2106101},
MRREVIEWER = {Yvan\ Martel},
}

@article {KeGuVe,
    AUTHOR = {Kenig, Carlos E. and Ponce, Gustavo and Vega, Luis},
     TITLE = {Quadratic forms for the {$1$}-{D} semilinear {S}chr\"odinger
              equation},
   JOURNAL = {Trans. Amer. Math. Soc.},
  FJOURNAL = {Transactions of the American Mathematical Society},
    VOLUME = {348},
      YEAR = {1996},
    NUMBER = {8},
     PAGES = {3323--3353},
      ISSN = {0002-9947,1088-6850},
   MRCLASS = {35Q55 (35B45 47F05)},
  MRNUMBER = {1357398},
MRREVIEWER = {Peter\ Stollmann},
       DOI = {10.1090/S0002-9947-96-01645-5},
       URL = {https://doi.org/10.1090/S0002-9947-96-01645-5},
}

@article {KlMa,
    AUTHOR = {Klainerman, S. and Machedon, M.},
     TITLE = {Space-time estimates for null forms and the local existence
              theorem},
   JOURNAL = {Comm. Pure Appl. Math.},
  FJOURNAL = {Communications on Pure and Applied Mathematics},
    VOLUME = {46},
      YEAR = {1993},
    NUMBER = {9},
     PAGES = {1221--1268},
      ISSN = {0010-3640,1097-0312},
   MRCLASS = {35L05 (35B45 35L15 35L70)},
  MRNUMBER = {1231427},
MRREVIEWER = {Vladimir\ Georgiev},
       DOI = {10.1002/cpa.3160460902},
       URL = {https://doi.org/10.1002/cpa.3160460902},
}

@article {KT1,
    AUTHOR = {Koch, Herbert and Tataru, Daniel},
     TITLE = {A priori bounds for the 1{D} cubic {NLS} in negative {S}obolev
              spaces},
   JOURNAL = {Int. Math. Res. Not. IMRN},
  FJOURNAL = {International Mathematics Research Notices. IMRN},
      YEAR = {2007},
    NUMBER = {16},
     PAGES = {Art. ID rnm053, 36},
      ISSN = {1073-7928,1687-0247},
   MRCLASS = {35Q53 (35B45 37K05 37K10)},
  MRNUMBER = {2353092},
       DOI = {10.1093/imrn/rnm053},
       URL = {https://doi.org/10.1093/imrn/rnm053},
}

@article {KT,
    AUTHOR = {Koch, Herbert and Tataru, Daniel},
     TITLE = {Dispersive estimates for principally normal pseudodifferential
              operators},
   JOURNAL = {Comm. Pure Appl. Math.},
  FJOURNAL = {Communications on Pure and Applied Mathematics},
    VOLUME = {58},
      YEAR = {2005},
    NUMBER = {2},
     PAGES = {217--284},
      ISSN = {0010-3640},
     CODEN = {CPAMA},
   MRCLASS = {35S05 (35B45 47G30)},
  MRNUMBER = {2094851 (2005m:35323)},
MRREVIEWER = {Yu. V. Egorov},
       DOI = {10.1002/cpa.20067},
       URL = {http://dx.doi.org/10.1002/cpa.20067},
}

@article {KT2,
    AUTHOR = {Koch, Herbert and Tataru, Daniel},
     TITLE = {Conserved energies for the cubic nonlinear {S}chr\"odinger
              equation in one dimension},
   JOURNAL = {Duke Math. J.},
  FJOURNAL = {Duke Mathematical Journal},
    VOLUME = {167},
      YEAR = {2018},
    NUMBER = {17},
     PAGES = {3207--3313},
      ISSN = {0012-7094,1547-7398},
   MRCLASS = {35Q55 (35Q53 37K05 37K10)},
  MRNUMBER = {3874652},
MRREVIEWER = {Thierry\ Cazenave},
       DOI = {10.1215/00127094-2018-0033},
       URL = {https://doi.org/10.1215/00127094-2018-0033},
}

@book {KTV,
    AUTHOR = {Koch, Herbert and Tataru, Daniel and Vi{\c s}an, Monica},
     TITLE = {Dispersive equations and nonlinear waves},
    SERIES = {Oberwolfach Seminars},
    VOLUME = {45},
      NOTE = {Generalized Korteweg-de Vries, nonlinear Schr\"odinger, wave
              and Schr\"odinger maps},
 PUBLISHER = {Birkh\"auser/Springer, Basel},
      YEAR = {2014},
     PAGES = {xii+312},
      ISBN = {978-3-0348-0735-7; 978-3-0348-0736-4},
   MRCLASS = {35-02 (35L70 35Q51 35Q53 35Q55)},
  MRNUMBER = {3618884},
}

@article {MuToTa,
    AUTHOR = {Muramatu, Tosinobu and Taoka, Shifu},
     TITLE = {The initial value problem for the 1-{D} semilinear
              {S}chr\"odinger equation in {B}esov spaces},
   JOURNAL = {J. Math. Soc. Japan},
  FJOURNAL = {Journal of the Mathematical Society of Japan},
    VOLUME = {56},
      YEAR = {2004},
    NUMBER = {3},
     PAGES = {853--888},
      ISSN = {0025-5645,1881-1167},
   MRCLASS = {35Q55 (35G25 46E35)},
  MRNUMBER = {2071676},
MRREVIEWER = {Giulio\ Soliani},
       DOI = {10.2969/jmsj/1191334089},
       URL = {https://doi.org/10.2969/jmsj/1191334089},
}

@incollection {NaTaTs,
    AUTHOR = {Nakanishi, Kenji and Takaoka, Hideo and Tsutsumi, Yoshio},
     TITLE = {Counterexamples to bilinear estimates related with the {K}d{V}
              equation and the nonlinear {S}chr\"odinger equation},
      NOTE = {IMS Conference on Differential Equations from Mechanics (Hong
              Kong, 1999)},
   JOURNAL = {Methods Appl. Anal.},
  FJOURNAL = {Methods and Applications of Analysis},
    VOLUME = {8},
      YEAR = {2001},
    NUMBER = {4},
     PAGES = {569--578},
      ISSN = {1073-2772,1945-0001},
   MRCLASS = {35Q53 (35B45 35Q55)},
  MRNUMBER = {1944182},
MRREVIEWER = {Zoran\ Gruji\'c},
       DOI = {10.4310/MAA.2001.v8.n4.a7},
       URL = {https://doi.org/10.4310/MAA.2001.v8.n4.a7},
}

@article {Ta-2,
  AUTHOR =	 {Tataru, Daniel},
  TITLE =	 {On global existence and scattering for the wave maps
                  equation},
  JOURNAL =	 {Amer. J. Math.},
  FJOURNAL =	 {American Journal of Mathematics},
  VOLUME =	 123,
  YEAR =	 2001,
  NUMBER =	 1,
  PAGES =	 {37--77},
  ISSN =	 {0002-9327},
  CODEN =	 {AJMAAN},
  MRCLASS =	 {58J45 (35L70 35P25)},
  MRNUMBER =	 {1827277 (2002c:58045)},
  MRREVIEWER =	 {Kenji Nakanishi},
  URL =
                  {http://muse.jhu.edu/journals/american_journal_of_mathematics/v123/123.1tataru.pdf},
}

@book {Tao-book,
    AUTHOR = {Tao, Terence},
     TITLE = {Nonlinear dispersive equations},
    SERIES = {CBMS Regional Conference Series in Mathematics},
    VOLUME = {106},
      NOTE = {Local and global analysis},
 PUBLISHER = {Conference Board of the Mathematical Sciences, Washington, DC;
              by the American Mathematical Society, Providence, RI},
      YEAR = {2006},
     PAGES = {xvi+373},
      ISBN = {0-8218-4143-2},
   MRCLASS = {35Q53 (35B35 35P25 35Q55 37K10)},
  MRNUMBER = {2233925},
MRREVIEWER = {Sebastian\ Herr},
       DOI = {10.1090/cbms/106},
       URL = {https://doi.org/10.1090/cbms/106},
}

@article{guo_global_2025,
	title = {Global solutions to 3{D} quadratic nonlinear {S}chrödinger-type equation},
	volume = {13},
	issn = {2050-5094},
	doi = {10.1017/fms.2025.10132},
	pages = {e183},
	journal = {Forum of Mathematics, Sigma},
	author = {Guo, Zihua and Liu, Naijia and Song, Liang},
	date = {2025},
}

@article{tao_multilinear_2001,
	title = {Multilinear weighted convolution of {L} 2 functions, and applications to nonlinear dispersive equations},
	volume = {123},
	issn = {1080-6377},
	doi = {10.1353/ajm.2001.0035},
	pages = {839--908},
	number = {5},
	journal = {American Journal of Mathematics},
	author = {Tao, Terence},
	date = {2001-10},
}

@article{colliander_bilinear_2001,
	title = {Bilinear {E}stimates and {A}pplications to 2{D} {NLS}},
	volume = {353},
	issn = {0002-9947},
	pages = {3307--3325},
	number = {8},
	journal = {Transactions of the American Mathematical Society},
	publisher = {American Mathematical Society},
	author = {Colliander, J. E. and Delort, J.-M. and Kenig, C. E. and Staffilani, G.},
	date = {2001},
}

@article{bejenaru_sharp_2006,
	title = {Sharp well-posedness and ill-posedness results for a quadratic non-linear {S}chrödinger equation},
	volume = {233},
	issn = {0022-1236},
	doi = {10.1016/j.jfa.2005.08.004},
	pages = {228--259},
	number = {1},
	journal = {Journal of Functional Analysis},
	author = {Bejenaru, Ioan and Tao, Terence},
	date = {2006-04-01},
}

@article{ginibre_almost_1995,
	title = {Almost global existence of small solutions to quadratic nonlinear {S}chrödinger equations in three space dimensions},
	volume = {219},
	issn = {1432-1823},
	doi = {10.1007/BF02572354},
	pages = {119--140},
	number = {1},
	journaltitle = {Mathematische Zeitschrift},
	author = {Ginibre, J. and Hayashi, N.},
	date = {1995-05-01},
}

@article{tao_bilinear_1998,
	title = {A Bilinear Approach to the Restriction and Kakeya Conjectures},
	volume = {11},
	issn = {0894-0347},
	url = {https://www.jstor.org/stable/2646145},
	pages = {967--1000},
	number = {4},
	journaltitle = {Journal of the American Mathematical Society},
	publisher = {American Mathematical Society},
	author = {Tao, Terence and Vargas, Ana and Vega, Luis},
	urldate = {2025-06-22},
	date = {1998},
}

@article {Tsu,
    AUTHOR = {Tsutsumi, Yoshio},
     TITLE = {{$L^2$}-solutions for nonlinear {S}chr\"odinger equations and
              nonlinear groups},
   JOURNAL = {Funkcial. Ekvac.},
  FJOURNAL = {Funkcialaj Ekvacioj. Serio Internacia},
    VOLUME = {30},
      YEAR = {1987},
    NUMBER = {1},
     PAGES = {115--125},
      ISSN = {0532-8721},
   MRCLASS = {35Q20 (35J10)},
  MRNUMBER = {915266},
MRREVIEWER = {Woodford\ W.\ Zachary},
       DOI = {10.24546/0100499535},
       URL = {https://doi.org/10.24546/0100499535},
}

@article{zhang_endpoint_2018,
	title = {The endpoint perturbed Brascamp–Lieb inequalities with examples},
	volume = {11},
	pages = {555--581},
	number = {3},
	journaltitle = {Analysis \& {PDE}},
	author = {Zhang, Ruixiang},
	date = {2018-01},
}

\end{document}